\newtheorem{theorem}{Theorem}[section]
\newtheorem{definition}{Definition}[section]
\newtheorem{proposition}{Proposition}[section]
\newtheorem{lemma}{Lemma}[section]
\newtheorem{corollary}{Corollary}[section]
\newtheorem{remark}{Remark}[section]
\newcommand{\R}{\ensuremath{\mathbb{R}}}
\newcommand{\car}{1\hspace{-1.4 mm}1}
\def\xb{{\bar x}}
\numberwithin{equation}{section}
\title[Asymptotic behavior of HJ equation]
{Large time behavior of solutions of  viscous Hamilton-Jacobi
Equations with superquadratic Hamiltonian}\thanks{This work was partially supported
by the ANR ``Hamilton-Jacobi et théorie KAM faible'' (ANR-07-BLAN-3-187245), the AUF (Agence Universitaire de la Francophonie) scholarship
program and the SARIMA (Soutien aux Activit\'es de
Recherche d'Informatique et de Math\'ematiques en Afrique) project.}
\author[Thierry  Tabet Tchamba]{ Thierry Tabet Tchamba }
\begin{document}

\date{}

\maketitle

\bigskip
\begin{small}
\noindent{\bf Abstract }
We study the long-time behavior of the
unique viscosity solution $u$ of the viscous Hamilton-Jacobi
Equation $u_t-\Delta u+|Du|^m=f\hbox{ in }\Omega\times (0,+\infty)$
with inhomogeneous Dirichlet boundary conditions, where $\Omega$
is a bounded domain of $\mathbb{R}^N$. We mainly focus on the superquadratic case
($m>2$) and consider the Dirichlet conditions in the generalized
viscosity sense. Under rather natural assumptions on $f,$ the initial and boundary data,
we connect the problem studied to its associated stationary 
generalized Dirichlet problem on one hand and to a stationary problem with a state 
constraint boundary condition on the other hand.
\end{small}
\vskip1em

\bigskip

\section{ Introduction }

The motivation of this work is the study of the large time behavior of the unique solution of a nonlinear second order parabolic 
equation of the following type 
\begin{equation}\label{1}
u_{t}-\Delta u + |Du|^{m}=f(x) \quad\hbox{ in } \Omega \times (0,+\infty),
\end{equation} where $\Omega$ is a bounded domain of $\mathbb{R}^{N}$ with a $C^{2}$-boundary, $m>2$ and $f\in C(\overline{\Omega}).$ The solution $u$ is a real-valued function defined on $\overline{\Omega} \times [0, +\infty)$ and $u_t,$ $Du,$ $\Delta u$ denote respectively the partial derivative with respect to $t,$ the gradient
with respect to the space variable and the Laplacian of $u.$ We complement (\ref{1}) with initial and boundary conditions,
namely
\begin{equation}\label{1'}
u(x,0)=u_{0}(x) \quad\hbox{ in }\,\  \overline{\Omega},
\end{equation}
\begin{equation}\label{1''}
u(x,t)=g(x)  \quad\hbox{ on }\,\  \partial\Omega \times
[0, +\infty),
\end{equation}
where $u_{0}:\Omega \rightarrow \mathbb{R}$ and
$g:\partial\Omega\rightarrow \mathbb{R}$ are bounded and
continuous functions satisfying the compatibility condition
\begin{equation}\label{2}
u_{0}(x)=g(x) \quad\hbox{ for all } x\in \partial \Omega.
\end{equation}

In the existing litterature, as far equation (\ref{1}) is concerned, some works have been devoted to its
study of in the whole space $\mathbb{R}^N,$ addressing the
questions of the existence, uniqueness and properties of either
classical solutions (see for example Amour \& Ben-Artzi \cite{AmBe}, Ben-Artzi \cite{Ben1,Ben2},
Gilding, Gueda \& Kerner \cite{GilGueKer} and the references therein) or solutions in the sense of distributions
functions (see for example Ben-Artzi, Souplet and Weissler \cite{BenSouWei}). 

Some other works, like for instance, Fila \& Lieberman \cite{FiLie}
and Souplet \cite{Souplet} deal with (\ref{1}) in an open bounded subset of $\mathbb{R}^N$ by studying the solvability of the
Cauchy-Dirichlet problem (\ref{1})-(\ref{1'})-(\ref{1''}). They prove that, under suitable assumptions on $u_0$ and $g,$ there exists a solution on some interval $[0,T^*),$ with the property that its gradient blows up on the boundary $\partial\Omega$ while the
solution remains bounded. This singularity thus yields a
difficulty when one wants to extend the solution past $T^*.$

Recently, Barles \& Da Lio \cite{BaDa1} proved that, actually when
$1<m\leq2,$ the Cauchy-Dirichlet problem can be solved in the
classical sense but can not longer be solved, that way, for any $g$ when
$m>2.$ More precisely, when $g$ is large, there could be loss
of boundary condition when $m>2,$ due to the presence of the
superquadratic growth in $|Du|.$
They consider (\ref{1})-(\ref{1'})-(\ref{1''}) with $f=0$ and prove
that there exists a unique continuous, global in time solution of (\ref{1})-(\ref{1'}) with
the Dirichlet boundary condition in the relaxed viscosity formulation  (see Definition \ref{visc} and \cite[Theorem 3.1]{BaDa1}). Moreover, they
provided an explicit expression of the solution of
(\ref{1})-(\ref{1'})-(\ref{1''}) in terms of a value function of
some stochastic control exit time problem obtained by considering the state $(X_t)_t$ of
a system driven by the stochastic differential equation
\begin{equation}\label{SDE}
dX_t=a_tdt+dB_t \ \hbox{ for } t>0, \ \ X_0=x \in \Omega
\end{equation} where $(B_t)_t$ is a
$N$-dimensional Brownian motion and  $(a_t)_t,$ the control, is some progressively
measurable process with respect to the filtration associated to $(B_t)_t$ which takes values in $\mathbb{R}^N.$ They proved that, for all
$(x,t)\in \overline{\Omega}\times [0,+\infty),$ the value function
\begin{eqnarray}\label{valfunct}
u(x,t):=\underset{(a_s)_s} \inf \ \mathbb{E}_{x}\biggl\{\int
_{0}^{\tau_x}\biggl[f(X_s)+(m-1)m^{-\frac{m}{m-1}}|a_s|^{\frac{m}{m-1}}\biggr]
ds\\ +\car_{\tau_x\leq t}g(X_{\tau_x})+\car_{\tau_x>
t}u_0 (X_t )\biggr\}\nonumber
\end{eqnarray} is continuous in $\Omega\times [0,T]$
for all $T>0$ and its continuous extension on $\overline{\Omega}
\times [0,T]$ is the unique viscosity solution of
(\ref{1})-(\ref{1'})-(\ref{1''}) (see \cite[Theorem 3.2]{BaDa1})
where $\mathbb{E}_{x}$ represents the conditional expectation with respect to the event
$\{X_0=x\}$ and $\tau_x$ is the first time when the trajectory $(X_t)_t,$ starting at $x,$ hits the
boundary $\partial\Omega.$ 

Our paper mainly complements the investigation of \cite{BaDa1} by
analyzing the large time behavior of the global solution of
(\ref{1})-(\ref{1'})-(\ref{1''}). Several papers have
studied this question either for the Cauchy problem for (\ref{1}) in the
whole space $\mathbb{R}^N$ (see for example \cite{BaPa1}, \cite{BeKaLa} and \cite{Gilding1}) or for the
Neumann problem for (\ref{1}) (see \cite{BaDa2}, \cite{BeDa1} and \cite{DaLio2}) but,
to the best of our knowledge, the first and only work to deal the
Cauchy-Dirichlet problem (\ref{1})-(\ref{1'})-(\ref{1''}) is the
one of Benachour, Dabuleanu-Hapca \& Laurencot \cite{BeDaLa}. They studied the long time
behavior of the solution of (\ref{1}) with $f\equiv 0$ associated with the homogeneous classical Dirichlet
boundary condition ($g\equiv 0$). Their main results can be roughly summarized in
the following way: (first) for $m>1,$ the global classical
solution (in the sense of \cite[Definition 1.1]{BeDaLa}) decays to
zero in the $W^{1,\infty}$-norm with the same rate as in the
linear case; (next) for $m=1,$ and exponential decay to
zero also take place, but the rate of convergence differs from
that of the linear case; (finally) when $m\in (0,1),$
the gradient term  plays a role in the large time
dynamics and a finite time extinction occurs for the nonnegative
solution.

In the general case, one may think at first glance that the solution $u$ of
(\ref{1})-(\ref{1'})-(\ref{1''}) converges uniformly to the unique
solution of the stationary equation
\begin{equation}\label{6a}
-\Delta w(x) + |Dw(x)|^{m}= f(x) \quad\hbox{ in }\Omega
\end{equation} associated with the Dirichlet boundary condition
\begin{equation}\label{6b}
w(x) = g(x)\quad\hbox{ in }\Omega,
\end{equation} and this phenomenon is consistent with the results obtained in \cite{BeDaLa} for which
$f\equiv0$ and $g\equiv 0$ and therefore the unique solution of (\ref{6a})-(\ref{6b}) is $w\equiv 0.$ But, for any $m>1,$ the existence of a
viscosity solution for (\ref{6a})-(\ref{6b}) is no longer guaranted in general when $f\neq0$ (see Alaa \& Pierre \cite{AlPi} or Grenon,  Murat \& Porretta \cite{GreMuPo} or Souplet \& Zhang \cite{SouZha} for details). For the reader convenience, we provide an example showing that (\ref{6a}) do not have any solution for any bounded $f,$ even smooth enough. Moreover we also note that the uniform $L^{\infty}$-bounds of
$u$ is not always guaranted when $f$ is bounded even for simple ordinary ($y'(t)=-1$) or partial $( u_{t}-u_{xx}+|u_{x}|=-1)$ equations. It is then be hopeless for fully
nonlinear PDE as we can note from (\ref{valfunct}) where $u(x,t)$ could go to $-\infty$ for some $f<0.$

Taking this into account, we find that the study of the long time behavior of the solution $u$ of (\ref{1})-(\ref{1'})-(\ref{1''})
first lead us to the study of a stationary ergodic problem. More precisely, like in the work of lasry \& Lions \cite{LaLi}, we are interested in findind an appropriate constant $c$ such that
the function $u(\cdot,t)+ct$ remains bounded and $c$ is the unique constant for which the stationnary equation
with state constraint boundary condition
\begin{equation}\label{112a}
-\Delta w(x) +|Dw(x)|^m=f(x)+c \quad\hbox{ in }\,\ \Omega
\end{equation}
\begin{equation}\label{112b}
-\Delta w(x) + |Dw(x)|^{m}\geq f(x)+c \quad\hbox { on }\,\ \partial \Omega.
\end{equation}
has a continuous and bounded viscosity solution $u_\infty.$

The final goal of the present paper is to establish, when $m>2,$ that
\begin{itemize}
\item[(i)] either $c<0$ and $u(\cdot, t)$ converges to
the unique solution of the generalized Dirichlet problem (\ref{6a})-(\ref{6b}) as $t \to +\infty,$
\item[(ii)] or $c=0$ and $u(\cdot, t)$ converges to a solution of the generalized Dirichlet problem (\ref{6a})-(\ref{6b}) as $t \to +\infty,$
\item[(iii)]  or $c>0$ and, independently of $g,$ the function $u(\cdot,t)+ct$ converges uniformly on $\Omega$
to a solution of the ergodic problem  with the state constraint boundary condition (\ref{112a})-(\ref{112b}).
\end{itemize}

To briefly explain such a behavior in the case where $c>0,$ we
notice that the function $u(x,t)+ct$ is the unique viscosity
solution of (\ref{1})-(\ref{1'})-(\ref{1''}) in which $f(x)$ and $g(x)$ are replaced  by $f(x)+c$ and $g(x)+ct$ respectively.
When $m>2,$ loss of boundary condition really happens in this case since
we prove that $u(x,t)+ct$ remains bounded on $\overline{\Omega}.$ This loss of boundary condition roughly introduces the
state constraints suggesting that in (\ref{valfunct}), 
the first time when the trajectory $(X_t)_t$ hits the boundary is $\tau_x=+\infty.$ This means 
that there exists a control $(a_t)_t$ keeping the process
$(X_t)_t$ inside $\Omega,$ for all $t\geq0$ with probability one.

The problem (\ref{112a})-(\ref{112b}) therefore naturally introduces the state constraint problems
coming from stochastic control problems, we refer the reader to  \cite{LaLi} where
this topic has largely been studied. We recall that state constraint problems were first studied by
Soner \cite{Soner1} in the deterministic case (see also
Fleming and Soner \cite{FleSon} and Capuzzo-Dolcetta and Lions
\cite{CapLi}) whereas Katsoulakis \cite{Kat} and Lasry and Lions
\cite{LaLi} studied it from the stochastic point of view.

This paper is organized as follow: in Section \ref{section1}, we first briefly
recall in which sense the viscosity solution of a general initial boundary-value problem has to be understood. Next,  
we recall some results on the strong comparison (uniqueness) and global in time existence 
for a parabolic problem and its associated stationary problem. We continue by introducing 
some results on the Strong maximum Principle for some linear parabolic and elliptic equation 
and we finally provide a simple
example showing that the one dimensional problem (\ref{6a})-(\ref{6b}) cannot be solved for any function $f.$ In Section \ref{section3} we prove the existence of the pair $(u_\infty,c)$ solution 
of (\ref{112a})-(\ref{112b}) with some related properties.
In Section \ref{section4}, we state and prove the final convergence result.

\bigskip

\section{Preliminaries.}\label{section1}
Given an open bounded and connected subset $\mathcal{O}$ of $\mathbb{R}^N$ with a $C^2$-boundary, let $F\in C(\overline{\mathcal{O}}\times[0,T]\times\mathbb{R}\times\mathbb{R}^N\times\mathcal{S}_N),$ $\phi\in C(\partial\mathcal{O}\times[0,T])$ and $\psi\in C(\overline{\mathcal{O}})$ be such that 
\begin{equation}\label{1'*}
\phi(x,0)=\phi(x) \quad\hbox{ for all }  x \in\partial\mathcal{O}.
\end{equation} By viscosity subsolution, supersolution and solution of the generalized initial boundary value problem
\begin{equation}\label{parabolic}
(BVP)\quad\quad\left\{
\begin{array}{rl}
u_t+F(x,t,u,Du,D^2u)&=0 \quad\quad\quad\hbox{ in }\mathcal{O}\times(0,T]\nonumber\\
u(x,t)&= \phi(x,t) \quad\hbox{ on } \partial\mathcal{O}\times(0,T]\nonumber\\
u(x,0)&= \psi(x) \ \ \quad\hbox{ on } \overline{\mathcal{O}},
\end{array}
\right.
\end{equation}
we mean the following 
\begin{definition}\label{visc}
\item[\ (i)]
An upper semicontinuous (usc in short) function $u$ in $\overline{\mathcal{O}} \times [0, T )$ is a
viscosity subsolution of (BVP) if and only if,
for all $\varphi \in C^{2}(\mathcal{O} \times [0, T]),$ at any local
maximum point $(x_{0}, t_{0})$ of $u-\varphi$ in
$\overline{\mathcal{O}} \times (0, T],$ the following holds
\begin{equation}\label{72}
\left\{
\begin{array}{rl}
\dfrac{\partial \varphi}{\partial t}(x_{0}, t_{0})+F(x_0,t_0,u,D\varphi,D^2\varphi) &\leq 0\quad\hbox{ if } (x_{0}, t_{0}) \in \mathcal{O} \times (0, T), \nonumber\\
\min\biggl\{\dfrac{\partial \varphi}{\partial t}+F(x_{0}, t_{0},u,D\varphi,D^2\varphi),
(u-\phi)(x_{0}, t_{0})\biggr\}&\leq 0 \quad\hbox{ if } (x_{0},t_{0}) \in \partial\mathcal{O} \times (0, T),\nonumber\\
u(x_{0},0)&\leq\psi(x_{0})\hbox{ if } (x_{0},t_{0}) \in
\overline{\mathcal{O}} \times \{0\}.\nonumber
\end{array}
\right.
\end{equation}
\item[\ (ii)]  A lower semicontinuous (lsc in short) function $u$ in $\overline{\mathcal{O}} \times
[0, T ]$ is a viscosity supersolution of (BVP) if and only if,
for all $\varphi \in C^{2}(\mathcal{O} \times [0, T]),$ at any local
minimum point $(x_{0}, t_{0})$ of $u-\varphi$ in
$\overline{\mathcal{O}} \times (0, T],$ the following holds
\begin{equation}\label{72*}
\left\{
\begin{array}{rl}
\dfrac{\partial \varphi}{\partial t}(x_{0}, t_{0})+F(x_0,t_0,u,D\varphi,D^2\varphi) &\geq 0\quad\hbox{ if } (x_{0}, t_{0}) \in \mathcal{O}\times (0, T), \nonumber\\
\max\biggl\{\dfrac{\partial \varphi}{\partial t}+F(x_{0},t_{0},u,D\varphi,D^2\varphi),
(u-\phi)(x_{0}, t_{0})\biggr\}&\geq 0 \quad\hbox{ if } (x_{0},t_{0}) \in \partial\mathcal{O} \times (0, T),\nonumber\\
u(x_{0},0)&\geq\psi(x_{0})\hbox{ if } (x_{0},t_{0}) \in
\overline{\mathcal{O}} \times \{0\}.\nonumber\end{array}
\right.
\end{equation}
\item[ \ (iii)] $u$ is a (continuous) viscosity
solution of (BVP) if $u$ is both a sub
and a supersolution.
\end{definition}
We use the term ``generalized" because we want to stress  on the fact that the viscosity solution theory
leads to a new formulation of the boundary conditions for degenerate elliptic equations,
that is why the standard Dirichlet boundary condition ``$u=\phi$ on  $\partial\mathcal{O} \times (0, T)$" has to be relaxed in the sense of Definition \ref{visc}. We should also write relaxed conditions on $\partial\mathcal{O} \times \{0\}$ and $\mathcal{O} \times \{0\},$ but for such parabolic
equations, by using the compatibility condition (\ref{1'*}), we find that the classical condition always holds at $t=0$ as mentioned in \cite[Lemma 4.1]{DaLio1} (see also \cite[Theorem 4.7]{Barles1} for first order equations). 

The definition of a viscosity solution for stationary elliptic operator is similar, we refer the reader to the books of Barles \cite{Barles1}, Bardi \& Capuzzo-Dolcetta \cite{BarCap}, Koike \cite{Koike1} and the user's guide of Crandall, Ishii \& Lions \cite{CranILi} for more details on the viscosity solution theory.

In particular, when $\mathcal{O}=\Omega,$ $F=-\Delta u+|Du|^m-f,$ $\phi=g$ and $\psi=u_0,$ we recall from \cite{BaDa1} that when $0<m\leq 2,$ there is no loss of boundary condition for sub and supersolutions: for any
subsolution $u$ (resp. supersolution $v$), we always have $u\leq \phi$ (resp. $v\geq \phi$) on
$\partial\Omega\times (0,T].$ For $m>2,$ we still have $u\leq \phi$ on
$\partial\Omega\times (0,T],$ but losses of boundary condition could happen for supersolution.
These results can be found in \cite[Propositions 3.1 and 3.2]{BaDa1} for the case $f=0$ 
and the proofs of  \cite{BaDa1} can easily extend to the general case.

Hereafter, we denote by $E(\mathcal{O},f,g,u_0,\lambda)$ the generalized initial boundary value problem $(BVP)$ when
$F=-\Delta u+|Du|^m+\lambda u-f$ with $\lambda\geq 0,$ $\phi=g$ and $\psi=u_0.$ We also notice that $E(\mathcal{O},f,g,u_0,0)$ and (\ref{1})-(\ref{1'})-(\ref{1''}) represent exactly the same problem. Here ``$E$" stands for evolution. 
\subsection{Comparison and existence results for a nonlinear parabolic problem.}\label{existence}\text{}

We start with
\begin{theorem}[Strong comparison result]\label{13} \text{ }\\ Given $T>0,$ for all $\ m>0,$ assume that
$\,\, f, u_{0}\in C(\overline{\mathcal{O}}), \phi \in C(\partial\mathcal{O})
,$ let $u$ be a bounded usc viscosity subsolution and $v$ be a bounded lsc viscosity supersolution of
$E(\mathcal{O},f,g,u_0,\lambda).$ Then $u \leq v \text{ in } \mathcal{O}\times [0, T].$ Moreover, if we define $\tilde{u}$ on $\overline{\mathcal{O}}\times [0,T)$ as follow:
\begin{equation}\label{72'}
\tilde{u}(z,t):=\left\{
\begin{array}{rl}
&\underset{ \underset {(y,s) \rightarrow (z,t)}  {(y,s)\in
\mathcal{O}  \times (0, T]} } {\limsup} u(y,s) \quad\hbox{ for all } (z,t)
\in \partial \mathcal{O}\times  [0,T),\\
&u(z,t) \quad\quad\quad\quad\hbox{ for all }(z,t) \in \mathcal{O}\times  [0,T),
\end{array}
\right.
\end{equation} then $\tilde{u}$ still a bounded subsolution of $E(\mathcal{O},f,g,u_0,\lambda)$ and  $\tilde{u} \leq v \text{ in } \overline{\mathcal{O}}\times [0, T].$
\end{theorem}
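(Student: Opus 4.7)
The plan is the classical doubling of variables approach for parabolic Hamilton--Jacobi equations, adapted to the relaxed Dirichlet boundary condition of Definition \ref{visc}. Arguing by contradiction, I would first reduce to a strict subsolution by replacing $u$ with $u(x,t) - \eta/(T-t)$ for small $\eta > 0$; this preserves the assumption $\sup(u-v) > 0$ while adding a strictly positive term $\eta/(T-\bar{t})^2$ to the subsolution inequality that will furnish the final contradiction. I would then penalize space and time and work with
\[
\Psi_\varepsilon(x,y,t,s) = u(x,t) - \frac{\eta}{T-t} - v(y,s) - \frac{|x-y|^2}{\varepsilon} - \frac{(t-s)^2}{\varepsilon}
\]
on $\overline{\mathcal{O}}^2 \times [0,T)^2$. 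Standard penalization estimates (see e.g.\ \cite{CranILi}) ensure that a maximizer $(\bar{x},\bar{y},\bar{t},\bar{s})$ satisfies $|\bar{x}-\bar{y}|^2/\varepsilon \to 0$ and $|\bar{t}-\bar{s}|^2/\varepsilon \to 0$, and the compatibility condition (\ref{2}) together with the initial data keep it away from $t=0$.

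If $\bar{x},\bar{y} \in \mathcal{O}$, I would apply the Crandall--Ishii parabolic lemma to obtain second-order super- and sub-jets with common gradient $p = 2(\bar{x}-\bar{y})/\varepsilon$ and matrices $X \leq Y$. Subtracting the viscosity inequalities, the $|p|^m$ terms cancel exactly, $\mathrm{tr}(Y-X) \geq 0$, and one is left with a bound of the form $\eta/(T-\bar{t})^2 + \lambda\bigl(u(\bar x,\bar t) - v(\bar y,\bar s)\bigr) \leq f(\bar{x}) - f(\bar{y})$, which is absurd in the limit $\varepsilon \to 0$ by uniform continuity of $f$. The delicate case is when $\bar{x} \in \partial\mathcal{O}$: I would invoke the no-loss-of-boundary-condition property for subsolutions (the extension of \cite[Proposition 3.1]{BaDa1} to $f \not\equiv 0$ noted just after Definition \ref{visc}), giving $u(\bar{x}, \bar{t}) \leq g(\bar{x})$. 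If the supersolution simultaneously retains its boundary condition at $\bar{y}$, continuity of $g$ and $|\bar{x}-\bar{y}| \to 0$ close the argument; if instead $v$ loses its boundary condition at $\bar{y}$, then the equation is satisfied at $\bar y$ in the relaxed sense and one reduces to the interior analysis above.

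The principal obstacle is this last situation, where in the superquadratic regime $m > 2$ the gradient of $v$ may blow up near $\partial\mathcal{O}$; the control comes from the strict parabolicity ($-\Delta u$) and from the fact that the penalization forces the same $p$ on both sides, so that the $|p|^m$ terms cancel regardless of size. For the second assertion, I would first verify that $\tilde{u}$ is itself a bounded usc subsolution of $E(\mathcal{O},f,g,u_0,\lambda)$: in the interior $\tilde{u} \equiv u$, while at a boundary maximum point $(x_0,t_0) \in \partial\mathcal{O} \times (0,T)$ of $\tilde{u} - \varphi$ I would exploit the definition of $\tilde{u}$ as an upper limit from the interior to construct a sequence of interior approximate maximizers of $u - \varphi$, and pass the subsolution inequality of $u$ to the limit to recover the relaxed inequality for $\tilde{u}$. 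Finally I would rerun the doubling of variables directly on $\overline{\mathcal{O}} \times [0,T]$ with the pair $(\tilde{u}, v)$; since $\tilde{u}$ is usc on the closed set by construction, the doubling maximum is attained there and the same case analysis yields $\tilde{u} \leq v$ on $\overline{\mathcal{O}} \times [0,T]$, which also recovers $u \leq v$ in $\mathcal{O} \times [0,T]$ as $u = \tilde{u}$ in the interior.
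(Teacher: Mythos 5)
The paper does not actually write out a proof of Theorem \ref{13}: it states that the proof ``is performed exactly as the one of \cite[Theorem 3.1]{BaDa1}'', so your sketch has to be measured against that argument. Your architecture (reduction to a strict subsolution, doubling of variables, case analysis at the boundary, no-loss of boundary condition for subsolutions) is the right skeleton, and the second half of your proposal --- that $\tilde{u}$ defined by (\ref{72'}) is still a subsolution, via interior approximate maximizers of $u-\varphi$ --- is standard and correct. But there is a genuine gap at exactly the point where the superquadratic case is hard. At a maximizer with $\bar x\in\partial\mathcal{O}$, the relaxed subsolution condition of Definition \ref{visc} is a \emph{minimum} of two alternatives: it gives you either the PDE inequality or $u(\bar x,\bar t)\leq g(\bar x)$, and in general only the latter is available. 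If simultaneously the supersolution loses its boundary condition at $\bar y$ (that is, $v(\bar y,\bar s)<g(\bar y)$ and only the PDE holds there --- which is precisely what can happen when $m>2$), then you can neither compare the boundary data (the inequality $u(\bar x,\bar t)-v(\bar y,\bar s)>g(\bar x)-g(\bar y)$ goes the wrong way) nor subtract the two viscosity inequalities, since the subsolution PDE inequality at $\bar x$ is simply not there. Your claim that this case ``reduces to the interior analysis'' is therefore unjustified; the configuration ``subsolution pinned to the data, supersolution detached from it'' is the whole difficulty.

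The resolution in \cite{BaDa1} uses machinery your sketch never invokes: the $C^{0,\frac{m-2}{m-1}}$ regularity of subsolutions up to $\partial\mathcal{O}$ forced by the coercive gradient term, barrier/test functions built from $d^{\frac{m-2}{m-1}}$ whose normal derivative blows up at the boundary (whence the emptiness of the second-order subjets there, cf. \cite[Proposition 3.3]{BaDa1}), and an asymmetric penalization that shifts the subsolution's argument into the interior so that its PDE inequality is always usable. That shift has a price your sketch cannot pay for free: the gradients produced on the two sides of the doubling no longer coincide, so the $|p|^m$ terms do \emph{not} cancel exactly and must be controlled by convexity and coercivity of $p\mapsto|p|^m$. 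In short, the interior part and the $\tilde u$ construction are fine, but the boundary case needs the superquadratic barrier estimates, not just the case bookkeeping you propose.
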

\begin{remark}\rm
In general, getting a comparison result up to the boundary with the only constraint $u(\cdot,t)\leq g \hbox{ on } \partial\mathcal{O}\times [0,T]$  is hopeless. Indeed, as we will see later on,  loss of boundary condition can sometimes happen. Let, for example, $w$ be  a usc subsolution of $E(\mathcal{O},f,g,u_0,\lambda)$ such that $w(x_0,t)< g(x_0) \hbox{ for some } x_0\in\partial\mathcal{O},$ we can defining a new function $\tilde{w}(\cdot,t)$ to be equal to $w(\cdot,t)$ in $\overline{\mathcal{O}}$ except at $x_0,$ where we set $\tilde{w}(x_0,t)=g(x_0).$ Since $\tilde{w}$ remain a viscosity subsolution of $E(\mathcal{O},f,g,u_0,\lambda),$ an application of the first part of Theorem \ref{13} yields $\tilde{w}\leq w \hbox{ in }\mathcal{O}\times [0,T]$ whereas $\tilde{w}$ is not less that $w$ on $\partial\mathcal{O}$ since $w(x_0,t)<\tilde{w}(x_0,t).$ Redefining $u$ on the boundary by (\ref{72'}) enables us to turn around that difficulty and extend the comparison up to the boundary.
\end{remark}
The proof of Theorem \ref{13} is performed exactly as the one of \cite[Theorem 3.1]{BaDa1} though the presence of $f$ and $\lambda$-term. The continuity of $f$ on $\overline{\mathcal{O}}$ being sufficient to apply the same arguments with no significant changes.

We continue with
\begin{corollary}\label{13*} Under the assumptions of Theorem \ref{13}, if $\,\,u_1$ is a bounded usc subsolution of $E(\mathcal{O},f_1,g_1,u_0,\lambda)$ such that
\begin{equation}\label{ss-bord}
 u_1 (x,t)=\underset{\underset {(y,s) \rightarrow (x,t)}  {(y,s)\in
\mathcal{O}  \times (0, T]} } {\limsup} u_1 (y,s) \quad\hbox{ for all } (x,t)
\in \partial \mathcal{O}\times  [0,T),
\end{equation}
and if $u_2$ is a bounded lsc supersolution of
$E(\mathcal{O},f_2,g_2,v_0,\lambda),$ then we have
\begin{equation}\label{31} \|(u_1-u_2)^{+}\|_\infty \leq t
\|(f_1-f_2)^{+}\|_{\infty}  + e^{-\lambda t}\|(u_{0}-v_{0})^{+}\|_{\infty}
+ \|(g_1-g_2)^{+}\|_{\infty}
\end{equation}
for all $0\leq t\leq T$ where  $a\vee b := \max (a,b),$ $r^{+}:=r \vee 0$ and $\|\cdot\|_{\infty}$ is the classical sup-norm either on
$\overline{\mathcal{O}}\times [0,T]$ or $\overline{\mathcal{O}}$ or $\partial\mathcal{O}.$
\end{corollary}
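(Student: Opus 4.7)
The plan is to construct a modified version of $u_1$ that becomes a subsolution of the equation satisfied by $u_2$, and then invoke the strong comparison Theorem \ref{13}. To this end, writing $M_f := \|(f_1-f_2)^+\|_\infty$, $M_g := \|(g_1-g_2)^+\|_\infty$, $M_0 := \|(u_0-v_0)^+\|_\infty$, I introduce the auxiliary function
$$
w(x,t) := u_1(x,t) - tM_f - e^{-\lambda t} M_0 - M_g.
$$
The target inequality $\|(u_1-u_2)^+\|_\infty \leq tM_f + e^{-\lambda t}M_0 + M_g$ will follow immediately once I prove $w \leq u_2$ on $\overline{\mathcal{O}} \times [0,T]$.

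The heart of the argument is to verify that $w$ is a bounded usc viscosity subsolution of $E(\mathcal{O}, f_2, g_2, v_0, \lambda)$ satisfying the boundary regularity condition (\ref{ss-bord}). For the interior equation, I plug a test function $\varphi$ into the definition of viscosity subsolution at a local maximum of $w - \varphi$: this corresponds to a local maximum of $u_1 - \psi$ where $\psi(x,t) = \varphi(x,t) + tM_f + e^{-\lambda t}M_0 + M_g$. Using that $u_1$ is a subsolution of $E(\mathcal{O}, f_1, g_1, u_0, \lambda)$ and performing the direct computation
$$
\partial_t w - \Delta w + |Dw|^m + \lambda w \leq f_1 - M_f - \lambda t M_f - \lambda M_g,
$$
one obtains the required inequality since $f_1 - M_f \leq f_2$ pointwise, and the remaining terms $-\lambda tM_f - \lambda M_g$ are nonpositive. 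At $t=0$, the inequality $w(x,0) = u_1(x,0) - M_0 \leq u_0(x) - M_0 \leq v_0(x)$ is immediate. On $\partial\mathcal{O}\times(0,T]$, the viscosity boundary condition for $u_1$ gives, at each relevant test point, either the PDE inequality (handled above) or $u_1 \leq g_1$, in which case $w \leq g_1 - M_g \leq g_2$. Finally, since $w$ differs from $u_1$ by a continuous function of $(x,t)$ only, the boundary regularization property (\ref{ss-bord}) is preserved.

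With these facts, Theorem \ref{13} applies directly to $w$ as a subsolution and $u_2$ as a supersolution of the same problem $E(\mathcal{O}, f_2, g_2, v_0, \lambda)$, yielding $w \leq u_2$ throughout $\overline{\mathcal{O}} \times [0,T]$. Rearranging gives $u_1 - u_2 \leq tM_f + e^{-\lambda t}M_0 + M_g$ pointwise; taking positive parts and suprema yields (\ref{31}).

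I do not anticipate serious obstacles: the only point requiring care is the bookkeeping in the boundary check, where one must ensure that the subsolution test, when phrased in terms of $w$, still produces the correct inequality $w \leq g_2$ (as opposed to $w \leq g_1$), and this relies on the fact that the constant $M_g$ has been absorbed into $w$. The role of the factor $e^{-\lambda t}$ rather than $1$ in front of $M_0$ is exactly to cancel the term $\lambda e^{-\lambda t} M_0$ coming from $\partial_t w$ against the corresponding contribution from $\lambda w$; without this choice the PDE inequality would fail when $\lambda > 0$.
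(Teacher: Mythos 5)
Your proof is correct and follows essentially the same route as the paper: subtract $\Lambda(t)=t\|(f_1-f_2)^+\|_\infty+e^{-\lambda t}\|(u_0-v_0)^+\|_\infty+\|(g_1-g_2)^+\|_\infty$ from $u_1$, verify that the result is a subsolution of $E(\mathcal{O},f_2,g_2,v_0,\lambda)$ (interior inequality, initial condition, and the relaxed boundary condition, which you in fact treat more carefully than the paper by keeping the $\min$-structure), and conclude by Theorem \ref{13}. One small quibble with your closing remark: replacing $e^{-\lambda t}M_0$ by the constant $M_0$ would not make the PDE inequality fail when $\lambda>0$ (a constant contributes $-\lambda M_0\leq 0$ through the zeroth-order term); the exponential weight is chosen to obtain the sharper decaying bound, not to rescue the subsolution property.
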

\begin{proof}[\bf Proof of Corollary \ref{13*}.]\text{} Put
$M_1:=\|(u_{0}-v_{0})^+\|_{\infty},$ $M_2:=\|(g_1-g_2)^{+}\|_{\infty},$ $M_3:=\|(f_1-f_2)^{+}\|_{\infty}$ and $\Lambda (t):=tM_3 +e^{-\lambda t}M_1+M_2.$ We claim that $\tilde{u}:=u_1-\Lambda(t)$ is a subsolution of $E(\mathcal{O},f_2,g_2,v_0,\lambda).$ Indeed, we first consider a point $(x,t)\in \mathcal{O}\times
(0,T],$ and formally compute
\begin{eqnarray}
\tilde{u}_{t}-\Delta \tilde{u}+|D\tilde{u}|^{m}+\lambda \tilde{u}&=&(u_1)_{t}-\Delta u_1 + |Du_1|^{m}+\lambda u_1-\Lambda'(t)-\lambda\Lambda(t)\nonumber\\
&\leq&f_1(x)-M_3 \leq f_1(x)-(f_1-f_2)(x)=f_2(x)\nonumber.
\end{eqnarray} Next, we obtain $\tilde{u}(\cdot,0)\leq v_{0}(\cdot)$ in $\mathcal{O}$ by computing
\begin{eqnarray}\tilde{u}(x,0)\leq u_1(x,0)-M_1\leq u_{0}(x)-(u_{0}-v_{0})(x)=v_{0}(x)\nonumber.
\end{eqnarray} Finally, if $(x,t)\in \partial\mathcal{O}\times [0,T],$ then
\begin{eqnarray}
\tilde{u}(x,t)\leq g_1(x)-M_2\leq g_1(x)-(g_1-g_2)(x)=g_2(x).\nonumber
\end{eqnarray} Thus, $\tilde{u}$ and $v$ are
respectively sub- and supersolution of $E(\mathcal{O},f_2,g_2,v_0,\lambda).$
We apply Theorem \ref{13} to obtain $\tilde{u} \leq u_2$ in
$\overline{\mathcal{O}} \times [0,T]$ thus leading to (\ref{31}).
\end{proof}
We end with the
\begin{theorem}[Existence and Uniqueness for $E(\mathcal{O},f,g,u_0,\lambda)$]\label{13''} \text{ }\\
For any $f\in C(\overline{\mathcal{O}}),$ $u_0\in C(\overline{\mathcal{O}})$ and $g\in C(\partial\mathcal{O})$ satisfying (\ref{2}),
there exists a unique, global in time, continuous viscosity solution of the generalized initial boundary-value problem $E(\mathcal{O},f,g,u_0,\lambda).$
\end{theorem}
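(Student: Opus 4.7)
The uniqueness part is immediate: given two continuous viscosity solutions $u_1,u_2$ of $E(\mathcal{O},f,g,u_0,\lambda)$, applying Theorem \ref{13} twice yields $u_1\leq u_2$ and $u_2\leq u_1$ on $\mathcal{O}\times[0,T]$, and continuity extends the equality up to $\overline{\mathcal{O}}\times[0,T]$. So the real work is existence, and my plan is the classical Perron method of Ishii, adapted to the relaxed boundary formulation of Definition \ref{visc}.

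The first step is to build explicit global barriers. Set
\[
C:=\max(\|u_0\|_\infty,\|g\|_\infty)+1,\quad K:=\|f\|_\infty+\lambda C+1,
\]
and define $\underline{u}(x,t):=-C-Kt$ and $\overline{u}(x,t):=C+Kt$. Both are smooth and constant in $x$, and a direct calculation shows $\underline{u}_t-\Delta\underline{u}+|D\underline{u}|^m+\lambda\underline{u}=-K-\lambda C-\lambda Kt\leq f$, with the reverse inequality for $\overline{u}$. By the choice of $C$, they lie below/above $u_0$ on $\overline{\mathcal{O}}$ and below/above $g$ on $\partial\mathcal{O}$, so they are classical (hence viscosity) sub- and supersolution of $E(\mathcal{O},f,g,u_0,\lambda)$ satisfying the boundary and initial data in the strong sense.

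Next, following Perron's method, define
\[
u(x,t):=\sup\bigl\{w(x,t):w\text{ is a bounded viscosity subsolution of }E(\mathcal{O},f,g,u_0,\lambda),\ \underline{u}\leq w\leq\overline{u}\bigr\}.
\]
The set is non-empty (it contains $\underline{u}$) and uniformly bounded by $\overline{u}$. Standard viscosity arguments, relying on the stability of subsolutions under supremum and on the fact that the relaxed boundary condition in Definition \ref{visc} is stable under usc envelope, show that the usc envelope $u^*$ is a subsolution of $E(\mathcal{O},f,g,u_0,\lambda)$. The crucial "bump lemma" argument then shows that $u_*$ is a supersolution: if it failed at some point $(x_0,t_0)$, one could locally replace $u$ by a slightly larger strict subsolution built from the test function, producing a subsolution strictly above $u$ at $(x_0,t_0)$, contradicting maximality.

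Having $u^*$ a bounded subsolution and $u_*$ a bounded supersolution, Theorem \ref{13} yields $u^*\leq u_*$ on $\mathcal{O}\times[0,T]$; the reverse inequality is trivial, so $u^*=u_*=u$ is continuous on $\mathcal{O}\times[0,T]$ and solves the equation in the viscosity sense. The second part of Theorem \ref{13} applied to the envelopes of $u$ on $\partial\mathcal{O}\times[0,T]$ extends the equality $u^*=u_*$ up to the boundary, giving continuity on $\overline{\mathcal{O}}\times[0,T]$. Finally, the compatibility condition $u_0=g$ on $\partial\mathcal{O}$ guarantees, via \cite[Lemma 4.1]{DaLio1}, that the initial condition $u(\cdot,0)=u_0$ holds in the classical sense. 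Since $T>0$ is arbitrary and the solution on $[0,T']$ restricted to $[0,T]$ (for $T<T'$) coincides with the one on $[0,T]$ by uniqueness, we obtain a global-in-time solution by taking $T\to+\infty$.

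The main technical obstacle is precisely the loss of boundary condition phenomenon for supersolutions when $m>2$: one cannot hope that $u_*=g$ pointwise on $\partial\mathcal{O}\times(0,T]$, so the Perron construction must be carried out with the relaxed boundary formulation from the outset. This is handled cleanly by Definition \ref{visc} together with the strong comparison of Theorem \ref{13}, which is precisely designed to compare sub- and supersolutions without requiring strong attainment of the Dirichlet datum — the same mechanism used in \cite{BaDa1} for $f=0$, which we invoke here in its straightforward extension to the inhomogeneous case with a linear $\lambda u$ term.
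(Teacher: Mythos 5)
Your proposal is correct and follows essentially the same route as the paper, whose proof consists of exactly this coupling of the comparison result of Theorem \ref{13} with Perron's method on $[0,T]$ (with the details deferred to Da Lio \cite{DaLio1}); your explicit barriers $\pm C\pm Kt$ are valid and the relaxed boundary formulation is invoked for the right reason. The only detail you leave implicit is the family of \emph{local} barriers at $t=0$ (the global barrier $-C-Kt$ sits strictly below $u_0$, so showing $u_*(\cdot,0)\geq u_0$ requires, for each $x_0$ and $\varepsilon>0$, an admissible subsolution close to $u_0(x_0)$ there, e.g.\ $u_0(x_0)-\varepsilon-A|x-x_0|^2-Bt$), which is standard and is what the compatibility condition (\ref{2}) and the cited lemma of \cite{DaLio1} are used for.
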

The proof of this result consists in coupling the comparison result with
the Perron's method on the time interval $[0,T].$ We refer to Da Lio \cite{DaLio1} for a complete proof of
this result.

\begin{remark}\rm It is worth mentioning that Theorems \ref{13} and \ref{13''} still hold even when $f$ and $g$ depend on the $x$ and $t$  variables. It would be enough, in that case,  to assume that $f\in C(\overline{\mathcal{O}} \times [0, T]$ and $g\in C(\partial\mathcal{O} \times [0, T])$ for any $T>0.$
\end{remark}

\subsection{Comparison Principle for a stationary problem}\text{}

Here, we denote by $S(\mathcal{O},h,k,\lambda),$ with ``$S$" standing for stationary, the following generalized Dirichlet problem
\begin{equation} \label{stat}
\left\{
\begin{array}{rcl}
-\Delta \phi + |D \phi|^m +\lambda \phi& = &h  \quad\hbox{ in }\mathcal{O} \nonumber\\
\phi & = &k  \quad\hbox{ on  }\partial\mathcal{O}\nonumber
\end{array}
\right.
\end{equation} where $h\in C(\overline{\mathcal{O}}),$ $k\in C(\partial\mathcal{O})$ and $\lambda\geq0.$
\begin{theorem}\label{scr1}
Let $h\in C(\overline{\mathcal{O}})$ and $k\in C(\partial\mathcal{O}).$ Assume that $m>0$ and $\lambda>0$ $($resp. $m>1$ and $S(\mathcal{O},h,k,0)$ has a strict subsolution$).$  Let $u\in USC(\overline{\mathcal{O}})$ and $v\in LSC(\overline{\mathcal{O}})$ be respectively bounded viscosity sub- and supersolution of $S(\mathcal{O},h,k,\lambda)$ $(\hbox{resp. } S(\mathcal{O},h,k,0))$ then $u\leq v$ on $\mathcal{O}.$ Moreover, if we define $\tilde{u}$ on $\overline{\mathcal{O}}$ as follows:
\begin{equation}
\tilde{u}(z):=\left\{
\begin{array}{rl}
&\underset{ y \rightarrow z,\ y\in
\mathcal{O} }{\limsup} \ u(y) \quad\hbox{ for all } z
\in \partial \mathcal{O},\\
&u(z) \quad\quad\quad\quad\hbox{ for all }z\in \mathcal{O},
\end{array}
\right.
\end{equation} then $\tilde{u}$ still a bounded subsolution of $S(\mathcal{O},h,k,\lambda)$ $(\hbox{resp. } S(\mathcal{O},h,k,0))$ and  we have $\tilde{u} \leq v \text{ in } \overline{\mathcal{O}}.$
\end{theorem}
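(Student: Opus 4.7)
The plan is a doubling-of-variables comparison adapted to the relaxed Dirichlet condition, supplemented by a convexity trick to treat the case $\lambda=0$. I first check that $\tilde u$ is a bounded usc subsolution of $S(\mathcal O,h,k,\lambda)$: at interior test points $\tilde u = u$ and nothing changes; at boundary maxima of $\tilde u-\varphi$ one approximates by interior maxima of $u-\varphi$ and passes to the limsup, giving either the PDE inequality or $\tilde u(x_0)\leq k(x_0)$. It therefore suffices to prove $\tilde u\leq v$ on $\overline{\mathcal O}$, which in particular restricts to $u\leq v$ on $\mathcal O$ since $u=\tilde u$ there.

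Suppose for contradiction that $M:=\sup_{\overline{\mathcal O}}(\tilde u-v)>0$; by semicontinuity on the compact $\overline{\mathcal O}$ the supremum is attained at some $\bar x$. Introduce
\[
\Phi_\epsilon(x,y):=\tilde u(x)-v(y)-\frac{|x-y|^2}{\epsilon^2}
\]
on $\overline{\mathcal O}\times\overline{\mathcal O}$ and let $(x_\epsilon,y_\epsilon)$ be a maximum point. The standard estimates give $\Phi_\epsilon(x_\epsilon,y_\epsilon)\to M$, $|x_\epsilon-y_\epsilon|^2/\epsilon^2\to 0$ and, along a subsequence, $x_\epsilon,y_\epsilon\to\bar x$. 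If at both points the relaxed conditions fall on the boundary branch, namely $\tilde u(x_\epsilon)\leq k(x_\epsilon)$ and $v(y_\epsilon)\geq k(y_\epsilon)$, then continuity of $k$ gives $\tilde u(x_\epsilon)-v(y_\epsilon)\leq k(x_\epsilon)-k(y_\epsilon)\to 0$, contradicting $M>0$. Otherwise the PDE inequalities apply (possibly up to the boundary) and the Crandall--Ishii theorem of sums delivers matrices $X\leq Y$ with the common gradient $p_\epsilon:=2(x_\epsilon-y_\epsilon)/\epsilon^2$ in the respective subjets of $\tilde u$ at $x_\epsilon$ and $v$ at $y_\epsilon$. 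Subtracting the two PDE inequalities, the Hamiltonian terms $|p_\epsilon|^m$ cancel, $-\operatorname{tr}(X)+\operatorname{tr}(Y)\geq 0$, and we obtain
\[
\lambda\bigl(\tilde u(x_\epsilon)-v(y_\epsilon)\bigr)\leq h(x_\epsilon)-h(y_\epsilon),
\]
whose right-hand side vanishes in the limit; this contradicts $\lambda M>0$.

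For the case $\lambda=0$ with $m>1$ and a strict subsolution $\underline w$ of $S(\mathcal O,h,k,0)$, I use the convexity of $p\mapsto|p|^m$ (valid for $m\geq 1$): the interpolant $\tilde u^\mu:=(1-\mu)\tilde u+\mu\underline w$, $\mu\in(0,1)$, is then a subsolution with a strict margin of order $\mu\delta$ in both the interior PDE and the boundary relation, where $\delta>0$ quantifies the strictness of $\underline w$. Running exactly the same doubling argument between $\tilde u^\mu$ and $v$, the margin $\mu\delta$ now plays the role of the $\lambda u$-term above and produces the contradiction. Letting $\mu\to 0^+$ gives $\tilde u\leq v$ on $\overline{\mathcal O}$.

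The main obstacle I expect is the careful bookkeeping of the mixed boundary situations, where only one of $x_\epsilon,y_\epsilon$ sits on $\partial\mathcal O$ and selects the boundary branch while the other delivers the PDE inequality. I handle this by exploiting that both points share the common limit $\bar x$ and that $k$ is continuous, so the one-sided boundary information combines with the opposite PDE inequality in a way that either sends $\tilde u(x_\epsilon)-v(y_\epsilon)$ to zero via $k$, or reduces to the pure PDE analysis for which the cancellation above still yields the required contradiction.
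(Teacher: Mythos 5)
Your reduction of the $\lambda=0$ case to the $\lambda>0$ mechanism via the convex interpolant with the strict subsolution is the same idea the paper uses, and the verification that $\tilde u$ is still a subsolution is standard. But the core of your argument --- the doubling of variables with the relaxed Dirichlet condition --- has a genuine gap at exactly the point you flag as "bookkeeping". The problematic configuration is the mixed one where the subsolution selects the Dirichlet branch at $x_\epsilon\in\partial\mathcal O$ (i.e.\ only $\tilde u(x_\epsilon)\leq k(x_\epsilon)$ is available, which for $m>2$ is in fact \emph{always} satisfied by subsolutions and so carries no contradiction by itself) while the supersolution loses the boundary condition at $y_\epsilon\in\partial\mathcal O$ (i.e.\ $v(y_\epsilon)<k(y_\epsilon)$ and only the PDE inequality is available for $v$). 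You then hold one boundary inequality and one PDE inequality, and neither the "compare boundary data via continuity of $k$" step nor the "subtract the two PDE inequalities" step applies: $\tilde u(x_\epsilon)\leq k(x_\epsilon)$ together with $\tilde u(x_\epsilon)-v(y_\epsilon)\approx M$ gives only $v(y_\epsilon)\lesssim k(x_\epsilon)-M$, which is consistent with the loss of boundary condition rather than contradictory. Handling this case is the genuinely hard part of the comparison for generalized Dirichlet problems with superquadratic gradient terms; in Barles--Da Lio it requires the H\"older regularity of subsolutions up to the boundary and a localization with test functions built on the distance function, not a routine limit argument. Your closing paragraph asserts the case "reduces to the pure PDE analysis" without providing the mechanism, so the proof is incomplete precisely where completeness was needed.

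For contrast, the paper sidesteps the doubling argument entirely: it views $u$ and $v$ as time-independent sub- and supersolutions of the parabolic problem $E(\mathcal O,h,k,\cdot,\lambda)$ and invokes the parabolic strong comparison result (Theorem 2.1, imported from Barles--Da Lio) through the estimate $\|(u-v)^+\|_\infty\leq e^{-\lambda t}\|(u-v)^+\|_\infty+t\cdot 0$, which forces $\|(u-v)^+\|_\infty=0$ when $\lambda>0$; for $\lambda=0$ it adds $\eta t$ to the strict-subsolution interpolant $u_\mu$ to gain a linearly growing margin in time and again concludes from the parabolic estimate. If you want a self-contained stationary proof you would have to reproduce the boundary analysis of the parabolic comparison theorem, which is where the real work lies.
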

We recall that a function is a said to be a strict subsolution of
$S(\mathcal{O},h,k,\lambda)$ if it is a subsolution of
$S(\mathcal{O},h-\eta,k-\delta,\lambda)$ for some $\delta>0$ and $\eta>0.$
\begin{proof}[\bf Proof of Theorem \ref{scr1}] We start by the case where $\lambda>0.$
\\ Since $u_t=v_t=0,$ it is obvious that $u$ is a subsolution of $E(\mathcal{O},h,u,k,\lambda)$ and that $v$ is supersolution of
and $E(\mathcal{O},h,v,k,\lambda).$ Coming back to (\ref{31}), for all $0<t<T,$ we obtain
$$\|(u-v)^{+}\|_\infty \leq  e^{-\lambda t}\|(u-v)^{+}\|_{\infty}.$$
Knowing that $\lambda, t>0,$ the last inequality holds only when $\|(u-v)^{+}\|_\infty=0$ and the comparison  $u\leq v$ on $\overline{\mathcal{O}}$ thus follows.

Now, we prove the $\lambda=0$ case.\\
Let $\bar{\phi}$ be a strict subsolution of $S(\mathcal{O},h,k,0),$ since $m>1,$ the map $p\mapsto|p|^m$ is convex and for all $0<\mu<1,$ we find that $u_\mu:=\mu u+(1-\mu)\bar{\phi}$ is a also strict subsolution of $S(\mathcal{O},h,k,0).$ Indeed, we formally have : 
\begin{eqnarray}
-\Delta u_{\mu} + |Du_{\mu}|^{m}&=& -\Delta [\mu u + (1-\mu) \bar{\phi}] + |D(\mu u + (1-\mu) \bar{\phi})|^{m}\nonumber\\
& \leq& -\mu \Delta u - (1-\mu)\Delta \bar{\phi}+ \mu |Du|^{m} + (1-\mu) |D\bar{\phi}|^{m}\nonumber\\
& <& \mu f+(1-\mu)f=f\nonumber.
\end{eqnarray}
This means that there exists two constants $\eta, \delta>0$ such that $u_\mu$ is a subsolution of $S(\mathcal{O},h-\eta,k-\delta,0).$ It is then obvious that $u_{\mu,\eta}(x,t):=u_\mu(x)+\eta t$ is a
subsolution of $E(\mathcal{O},h,k-\delta +\eta t,u_\mu,0)$ and $\tilde{v}(x,t)\equiv v(x)$ is a supersolution of $E(\mathcal{O},h,k,v0).$
From (\ref{31}) in Corollary \ref{13*}, we obtain, for all $0<t<T,$ the following estimate on $u_{\mu,\eta}-\tilde{v}$
\begin{equation}\label{47}
 \|(u_{\mu,\eta}-\tilde{v})^{+}\|_{\infty} \leq \|(u_{\mu,\eta}-\tilde{v})
_{|_{t=0}}^{+}\|_{\infty} +(\eta t- \delta  )^{+}.
\end{equation}  Since (\ref{47}) is true for all $T>0,$ it remains true
for all $0< \tau \leq T,$ that is :
\begin{equation}\label{48}
u_\mu(x)-v(x)+\eta t \leq \|(u_\mu-v)^{+}\|_{\infty} + (\eta \tau- \delta  )^{+}
\hbox{ for all } \, (x,t) \in  \overline{\mathcal{O}} \times (0, \tau).
\end{equation}
 Let $\bar{x} \in  \overline{\mathcal{O}}$ such that
$M_\mu:=u_\mu(\bar{x})-v(\bar{x})=\sup_{\overline{\mathcal{O}}}(u_\mu-v),$
(\ref{48}) implies $$M_\mu+\eta t \leq M_\mu^{+} + ( \eta \tau- \delta
)^{+}.$$  If $M_\mu>0,$ then for $\tau$ such that $ \eta \tau- \delta
\leq 0,$ we have $M_\mu+\eta t \leq M_\mu$ which leads to a contradiction. Therefore, $M_\mu\leq 0,$ and we conclude that $u\leq v \hbox{ on }
\overline{\mathcal{O}}$ by sending $\mu\to 1.$
\end{proof}
\begin{remark}\rm
Another way of proving Theorem \ref{scr1} for $\lambda>0,$ when $m>1,$ is to notice that $C_{h,k}:=-\|k\|_\infty-\|h\|_\infty/\lambda$ is a strict subsolution of $S(\mathcal{O},h,k,\lambda)$ and we continue arguing exactly as in the case where $\lambda=0.$
\end{remark}
\begin{corollary}\label{scr2} Let $h\in C(\overline{\mathcal{O}})$ and $k_1, k_2\in C(\partial\mathcal{O}).$ Assume that  $m>0$ and $\lambda>0$ $($resp. $m>1$ and $S(\mathcal{O},h,k_i,0)$ has a strict subsolution, $i=1,2).$ Let $ u_1$ be a bounded usc viscosity subsolution of $S(\mathcal{O},h,k_1,\lambda)$
$(\hbox{resp. } S(\mathcal{O},h,k_1,0))$ and
$u_2$ be supersolution of $S(\mathcal{O},h,k_2,\lambda)$ $(\hbox{resp. } S(\mathcal{O},h,k_2,0))$
then for all $x \in \overline{\mathcal{O}},$
\begin{equation}\label{compar1} \|(u_1-u_2)^+\|_\infty \leq \|(k_1-k_2)^{+}\|_{\infty}.
\end{equation}
\end{corollary}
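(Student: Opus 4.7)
My plan is to mimic the argument of Corollary \ref{13*}, translating it from the parabolic setting to the stationary one, and using Theorem \ref{scr1} in place of Theorem \ref{13}. The key idea is to shift $u_1$ down by a constant so that it becomes a legitimate subsolution of the problem satisfied by $u_2$, and then invoke comparison.

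First, I would set $M:=\|(k_1-k_2)^+\|_\infty$ and define $\tilde{u}_1:=u_1-M$. The crucial verification is that $\tilde{u}_1$ is a (bounded, usc) subsolution of $S(\mathcal{O},h,k_2,\lambda)$. For the PDE part, using a test function at an interior local maximum of $\tilde{u}_1-\varphi$, the test function for $u_1$ at the same point is $\varphi+M$, whose gradient and Hessian coincide with those of $\varphi$; the subsolution inequality for $u_1$ then gives
\[
-\Delta\varphi(x_0)+|D\varphi(x_0)|^m+\lambda\tilde{u}_1(x_0)\ =\ -\Delta\varphi(x_0)+|D\varphi(x_0)|^m+\lambda u_1(x_0)-\lambda M\ \leq\ h(x_0)-\lambda M\ \leq\ h(x_0),
\]
since $\lambda\geq 0$ and $M\geq 0$. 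For a boundary local maximum, the same argument works on the PDE alternative; on the other alternative, $u_1(x_0)\leq k_1(x_0)$ yields $\tilde{u}_1(x_0)=u_1(x_0)-M\leq k_1(x_0)-M\leq k_2(x_0)$, because $k_1-k_2\leq M$ pointwise on $\partial\mathcal{O}$.

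Once $\tilde{u}_1$ and $u_2$ are respectively a bounded usc subsolution and a bounded lsc supersolution of the same problem $S(\mathcal{O},h,k_2,\lambda)$, I would apply Theorem \ref{scr1}. In the regime $\lambda>0$ with $m>0$ this applies directly; in the regime $\lambda=0$ with $m>1$, the hypothesis that $S(\mathcal{O},h,k_2,0)$ admits a strict subsolution is exactly the structural assumption Theorem \ref{scr1} requires (this is why the corollary asks for a strict subsolution for $i=2$, and symmetrically for $i=1$ if one wanted to swap the roles). Theorem \ref{scr1} then gives $\tilde{u}_1\leq u_2$ on $\overline{\mathcal{O}}$, after replacing $\tilde{u}_1$ on $\partial\mathcal{O}$ by its inner limsup extension, which still leaves the sup of $\tilde{u}_1-u_2$ unchanged. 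Rearranging yields $u_1-u_2\leq M$ on $\overline{\mathcal{O}}$, which is exactly \eqref{compar1}.

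The only potentially delicate point is the boundary behaviour: since $u_1$ is merely usc, its values on $\partial\mathcal{O}$ may exceed its inner limsup, so one must be careful to argue (as in Theorem \ref{scr1}) that the comparison $\tilde{u}_1\leq u_2$ holds on all of $\overline{\mathcal{O}}$ after the boundary redefinition, and that this suffices to control $\|(u_1-u_2)^+\|_\infty$. This is the same subtlety handled in the proof of Theorem \ref{scr1} via the limsup extension, so no new ideas are required; beyond this, the proof is a direct constant-shift argument and should be short.
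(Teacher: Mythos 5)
Your proof is correct and follows essentially the same route as the paper, which simply remarks that $u_1-\|(k_1-k_2)^{+}\|_{\infty}$ is a subsolution of $S(\mathcal{O},h,k_2,\lambda)$ and applies Theorem \ref{scr1}. You merely spell out the verification (and the boundary/limsup subtlety) that the paper leaves implicit.
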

\begin{proof}[\bf Proof of Corollary \ref{scr2}]
We just remark that $u_1- \|(k_1-k_2)^{+}\|_{\infty}$ is a subsolution of $S(\mathcal{O},h,k_2,\lambda)$ and apply Theorem \ref{scr1} to obtain $u_1\leq u_2+ \|(k_1-k_2)^{+}\|_{\infty}\hbox{ on } \overline{\mathcal{O}}.$
\end{proof}
\begin{theorem}[Existence and Uniqueness for $S(\mathcal{O},h,k,\lambda)$]\label{13'''} \text{ }\\
Let $h\in C(\overline{\mathcal{O}})$ and $k\in C(\partial\mathcal{O}).$ Assume that either $m>0$ and $\lambda>0$ or $m>1$ and $S(\mathcal{O},h,k,0)$ has a strict subsolution.  Then there exists a unique continuous viscosity solution of the generalized boundary-value problem $S(\mathcal{O},h,k,\lambda).$
\end{theorem}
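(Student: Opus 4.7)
The uniqueness statement is immediate from the comparison result already proved: if $\phi_{1}$ and $\phi_{2}$ are two continuous viscosity solutions of $S(\mathcal{O},h,k,\lambda)$, each plays simultaneously the role of sub- and supersolution, so Theorem \ref{scr1} applied in both directions forces $\phi_{1}\equiv\phi_{2}$ on $\overline{\mathcal{O}}$. For the existence part my plan is to run Perron's method in the version adapted to generalized Dirichlet boundary conditions (as carried out in Da Lio \cite{DaLio1}), whose two structural ingredients are the strong comparison principle (furnished by Theorem \ref{scr1}) and the existence of bounded viscosity sub- and supersolutions. Thus the only real task is to exhibit such barriers in each of the two regimes of the hypothesis.

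When $\lambda>0$ I would use the two constants $\underline{C}:=-\|k\|_{\infty}-\lambda^{-1}\|h\|_{\infty}$ and $\overline{C}:=\|k\|_{\infty}+\lambda^{-1}\|h\|_{\infty}$: for $\underline{C}$ one has $-\Delta\underline{C}+|D\underline{C}|^{m}+\lambda\underline{C}=\lambda\underline{C}\leq -\|h\|_{\infty}\leq h$ in $\mathcal{O}$ and $\underline{C}\leq -\|k\|_{\infty}\leq k$ on $\partial\mathcal{O}$, making it a classical (hence viscosity) subsolution of $S(\mathcal{O},h,k,\lambda)$; the analogous computation turns $\overline{C}$ into a supersolution. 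When $\lambda=0$ and $m>1$, the hypothesis directly supplies a strict subsolution $\bar{\phi}$ as the lower barrier (note that $\bar{\phi}\leq k-\delta\leq k$ on $\partial\mathcal{O}$). For the upper barrier I would solve the \emph{linear} Poisson problem $-\Delta w=h$ in $\mathcal{O}$ with $w=k$ on $\partial\mathcal{O}$; classical elliptic regularity (using the $C^{2}$ character of $\partial\mathcal{O}$ and the continuity of $h$ and $k$) produces a unique $w\in C(\overline{\mathcal{O}})\cap C^{2}(\mathcal{O})$, and then $-\Delta w+|Dw|^{m}=h+|Dw|^{m}\geq h$ in $\mathcal{O}$ while $w=k$ on the boundary, so $w$ is a classical supersolution of $S(\mathcal{O},h,k,0)$. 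In this case the compatibility $\bar{\phi}\leq w$ is guaranteed by a further application of Theorem \ref{scr1}.

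With barriers $u_{-}\leq u_{+}$ available in both regimes, I would then define the Perron function
\[
U(x):=\sup\bigl\{u(x)\,:\,u\text{ is a bounded viscosity subsolution of }S(\mathcal{O},h,k,\lambda)\text{ with }u_{-}\leq u\leq u_{+}\bigr\},
\]
and invoke the standard stability-plus-bump construction to conclude that its upper semicontinuous envelope $U^{*}$ is a subsolution and its lower semicontinuous envelope $U_{*}$ is a supersolution of the generalized Dirichlet problem. Theorem \ref{scr1} applied to the pair $(U^{*},U_{*})$ yields $U^{*}\leq U_{*}$, while $U_{*}\leq U^{*}$ holds tautologically; hence $U:=U^{*}=U_{*}$ is continuous and is the desired viscosity solution. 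The main technical subtlety — and what forces the reference to \cite{DaLio1} rather than to a direct proof — is verifying that the semicontinuous envelopes of the Perron supremum inherit the \emph{relaxed} boundary inequalities of Definition \ref{visc} (the ``$\min$'' and ``$\max$'' conditions on $\partial\mathcal{O}$), which is precisely the point at which possible loss of boundary data for $m>2$ has to be accommodated; the addition of the zeroth-order monotone term $\lambda u$ is harmless for that argument.
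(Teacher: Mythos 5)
Your overall strategy is exactly the paper's: uniqueness from Theorem \ref{scr1}, existence via Perron's method in Da Lio's up-to-the-boundary version, with the whole work reduced to exhibiting ordered bounded barriers. Where you diverge is in the choice of supersolution. The paper uses the single explicit function $l(x)=|x-x_{0}|^{2}+\|k\|_{\infty}+1$, with $x_{0}$ placed far from $\overline{\mathcal{O}}$ so that $|Dl|^{m}=2^{m}|x-x_{0}|^{m}$ dominates $\|h\|_{\infty}+2N$; this barrier is smooth, works uniformly for every $\lambda\geq0$, and requires no auxiliary PDE. Your constant $\overline{C}$ is fine for $\lambda>0$, but your upper barrier for $\lambda=0$ — the solution of $-\Delta w=h$, $w=k$ on $\partial\mathcal{O}$ — contains a small technical inaccuracy: for $h$ merely continuous (not Dini or H\"older continuous) the Poisson equation need not admit a $C^{2}(\mathcal{O})$ solution, so the phrase ``classical elliptic regularity produces $w\in C(\overline{\mathcal{O}})\cap C^{2}(\mathcal{O})$'' is not literally correct. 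The construction is salvageable — either replace $h$ by the constant $\|h\|_{\infty}$ so the data are smooth, or work with the $W^{2,p}_{loc}\cap C^{1,\alpha}_{loc}$ strong solution, which is still a viscosity supersolution of $-\Delta w+|Dw|^{m}\geq h$ since $Dw$ is continuous — but the paper's quadratic barrier sidesteps the issue entirely and is the more economical choice. Everything else in your argument (the ordering of the barriers via comparison, the envelope/bump step, and the remark that the relaxed boundary inequalities are the delicate point handled in \cite{DaLio1}) matches the paper.
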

\begin{proof}[\bf Proof of Theorem \ref{13'''}]
Given that the Strong comparison result holds for the stationary problem $S(\mathcal{O},h,k,\lambda)$ for $\lambda\geq0$ (see Theorem \ref{scr1}), it just remains to build appropriate sub- and supersolution of $S(\mathcal{O},h,k,\lambda)$ in order to apply the Perron's method (see \cite{Ishi}). 
On one hand, Let $x_0\in \mathbb{R}^N$ be such that $B(x_{0},K)\cap \overline{\Omega}=\emptyset$ with $K\geq(\|h\|_{\infty}+2N)^{1/m},$ We claim that $l(x)= |x-x_{0}|^{2}+ \|k\|_{\infty}+1$ is a supersolution of $S(\mathcal{O},h,k,\lambda)$ with $\lambda\geq0.$ Indeed, 
\begin{equation}
-\Delta l + |Dl|^{m}+\lambda l\geq -\Delta l + |Dl|^{m}=-2N+2^{m}|x-x_{0}|^{m}>-2N+(2K)^{m}\nonumber.
\end{equation}
To conclude that $-\Delta l + |Dl|^{m}+\lambda l\geq f,$ it's sufficient to
choose $K$ such that $(2K)^{m}\geq\|h\|_{\infty}+2N.$ By its very definition, $l>k,$ we therefore conclude that $l$ is a supersolution of $S(\mathcal{O},h,k,\lambda).$ On the other hand, if $\lambda>0$ then $-\frac{\|h\|_\infty}{\lambda}-\|k\|_\infty-1$ is a subsolution of $S(\mathcal{O},h,k,\lambda)$ whereas the assumption on $S(\mathcal{O},h,k,0),$ itself, provide a subsolution in the case where $\lambda=0.$
The existence result for $S(\mathcal{O},h,k,\lambda)$ for all $\lambda\geq 0$ therefore follows. 
\end{proof}

\subsection{The Strong Maximum Principle}\text{}

We refer the reader to Evans \cite{Evans} and Gilbarg \&Trudinger \cite{GilTru} for more about the strong maximum principle for smooth solutions of linear parabolic and elliptic equations and to Bardi \& Da Lio \cite{BarDa} and Da Lio \cite{DaLio3} for viscosity solutions of fully nonlinear degenerate elliptic and parabolic operator. The result we are concerned with is the following
\begin{lemma}\label{linear*} Let $C>0.$ Any upper semicontinuous viscosity subsolution of
\begin{equation}\label{linearization}
-\Delta\phi-C|D\phi|=0\ \hbox{ in }\ \mathcal{O}
\end{equation} $(\hbox{resp.}$ 
\begin{equation}\label{linearization1}
 \phi_t-\Delta \phi-C|D\phi| =
0\ \ \hbox{ in }\ \mathcal{O}\times (0,T]\ )
\end{equation} that attains its maximum at some $x_0\in \mathcal{O}$  $(\hbox{resp. } (x_0,t_0)\in \mathcal{O}\times (0,T])$ is constant on $\mathcal{O}$
$(\hbox{resp. } \mathcal{O}\times [0,t_0])$. In particular,
$$ \max_{\overline{\mathcal{O}}}\phi = \max_{\partial \mathcal{O}}\phi \; ,$$ $( \hbox{resp.}$
$$ \max_{\overline{\mathcal{O}}\times [0,T]}\phi = \max_{\partial_p( \mathcal{O}\times (0,T))}\phi \; ,$$
where $\partial_p( \mathcal{O}\times (0,T))$ is the parabolic boundary of $\mathcal{O}\times (0,T)$, i.e. $(\partial \mathcal{O}\times (0,T)) \cup (\overline{\mathcal{O}} \times \{0\}).)$
\end{lemma}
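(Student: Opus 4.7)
The plan is to establish the elliptic case via a viscosity version of Hopf's boundary point lemma; the parabolic statement follows by an analogous scheme on a backward cylinder with a time-dependent barrier, so I focus on the elliptic proof. Set $M := \max_{\overline{\mathcal{O}}}\phi$ and $V := \{x \in \mathcal{O} : \phi(x) = M\}$. By upper semicontinuity $V$ is relatively closed in $\mathcal{O}$, and by connectedness of $\mathcal{O}$ it is enough to show that $V$ is also open; this forces either $V = \emptyset$ (whence $\max_{\overline{\mathcal{O}}}\phi = \max_{\partial\mathcal{O}}\phi$) or $V = \mathcal{O}$ (whence $\phi \equiv M$ on $\mathcal{O}$, and the boundary identity follows since $\phi \leq M$ on $\partial\mathcal{O}$ by upper semicontinuity). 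I would argue by contradiction: if $V$ is nonempty but $V \neq \mathcal{O}$, a standard topological construction produces a ball $B(y, R) \subset \mathcal{O}\setminus V$ with $\overline{B(y, R)} \subset \mathcal{O}$ and some $z_0 \in \partial B(y, R) \cap V$.

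On the annulus $A := \{R/2 < |x - y| < R\}$ introduce the smooth Hopf barrier $\psi(x) := e^{-\alpha|x-y|^2} - e^{-\alpha R^2}$, which is positive inside $B(y, R)$ and vanishes on $\partial B(y, R)$. A direct computation yields
\[
-\Delta(M - \epsilon\psi)(x) - C|D(M - \epsilon\psi)(x)| = 2\alpha\epsilon\, e^{-\alpha|x-y|^2}\bigl(2\alpha|x-y|^2 - N - C|x-y|\bigr),
\]
so for $\alpha$ large enough (depending only on $C$, $N$, $R$) this quantity is bounded below by some $\delta > 0$ on $A$; hence $M - \epsilon\psi$ is a classical \emph{strict} supersolution of (\ref{linearization}) on $A$ for every $\epsilon > 0$. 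Upper semicontinuity of $\phi$ on the compact set $\overline{B(y, R/2)} \subset \mathcal{O}\setminus V$ furnishes $\eta > 0$ with $\phi \leq M - \eta$ there; combined with $\psi \equiv 0$ on $\partial B(y, R)$ this gives $\phi \leq M - \epsilon\psi$ on $\partial A$ once $\epsilon$ is small enough.

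The standard test-function argument now extends the inequality to $\overline{A}$: if $\phi - (M - \epsilon\psi)$ attained a positive maximum $\mu$ at some interior $x_1 \in A$, the smooth function $M - \epsilon\psi + \mu$ would touch $\phi$ from above at $x_1$, directly violating the strict supersolution inequality satisfied by $M - \epsilon\psi$. Since $\psi \leq 0$ outside $B(y, R)$, the inequality $\phi \leq \varphi := M - \epsilon\psi$ then persists in a full neighborhood of $z_0$, with equality at $z_0$. Thus the smooth $\varphi$ is a viscosity test function for $\phi$ at the interior point $z_0 \in \mathcal{O}$, and the subsolution condition $-\Delta\varphi(z_0) - C|D\varphi(z_0)| \leq 0$ directly contradicts the strict supersolution inequality at $z_0 \in \overline{A}$. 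This completes the elliptic case.

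The parabolic statement (\ref{linearization1}) is proved by the same scheme on the backward cylinder $B(y, R) \times (t_0 - \tau, t_0]$ touching the maximum set at $(z_0, t_0)$, using the time-dependent barrier $\psi(x, t) := e^{-\alpha|x - y|^2 - \gamma(t_0 - t)} - e^{-\alpha R^2}$; the extra contribution $\partial_t(M - \epsilon\psi) = -\epsilon\alpha\gamma e^{-\alpha|x-y|^2 - \gamma(t_0 - t)}$ is absorbed into the same strict supersolution inequality by taking $\alpha$ sufficiently large. The hard part, in either case, is the barrier construction itself: the exponential weight has to be tuned so that $-\Delta\varphi$ strictly dominates $C|D\varphi|$ on the annular region, which succeeds precisely because the Hamiltonian $-C|p|$ is only linearly homogeneous in $|D\phi|$.
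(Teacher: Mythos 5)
Your elliptic argument is correct and complete, and it takes a genuinely different route from the paper: the paper does not prove the lemma directly, but observes that $-\Delta\phi-C|D\phi|$ and $\phi_t-\Delta\phi-C|D\phi|$ fit the structural hypotheses (nondegeneracy, positive homogeneity of degree one) of the general strong maximum principles for viscosity solutions of Bardi--Da Lio and Da Lio, and cites \cite{BarDa} and \cite{DaLio3}. What you give instead is a self-contained viscosity version of the classical Hopf construction, and every step checks out: $V$ is relatively closed by upper semicontinuity, the barrier computation $-\Delta(M-\epsilon\psi)-C|D(M-\epsilon\psi)|=2\alpha\epsilon e^{-\alpha|x-y|^2}\bigl(2\alpha|x-y|^2-N-C|x-y|\bigr)$ is exact and is made strictly positive on $\{|x-y|\ge R/2\}$ by taking $\alpha$ large (this is exactly the point where linear growth of the Hamiltonian is used, as you note), the comparison on $\overline{A}$ via the constant-shifted test function is the correct way to compare a usc viscosity subsolution with a classical strict supersolution, and the final touching at $z_0$ is a legitimate interior test point because $\psi\le 0$ outside $B(y,R)$. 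The only cosmetic caveat is that for a merely usc $\phi$ the identity $\max_{\overline{\mathcal{O}}}\phi=\max_{\partial\mathcal{O}}\phi$ in the case $\phi\equiv M$ on $\mathcal{O}$ requires $\phi$ to actually attain the value $M$ somewhere on $\partial\mathcal{O}$ (usc only gives $\le$); this is harmless here since the lemma is applied to continuous functions.

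The parabolic half, however, is dismissed too quickly, and it is precisely the part the paper needs in full strength: Step~3 of the proof of Lemma \ref{behavior2} uses the conclusion that $w-u_\infty$ is constant on $\Omega_\delta\times[0,1]$, i.e.\ the \emph{backward-in-time} propagation of the maximum, not just propagation within the time slice $\{t=t_0\}$. A single Hopf barrier on a backward cylinder touching the maximum set at a point $(z_0,t_0)$ of its lateral boundary only reproduces the spatial argument at the top time level: run as you describe, it shows at best that $\{x:\phi(x,t_0)=M\}$ is open in $\mathcal{O}$, hence $\phi(\cdot,t_0)\equiv M$. It says nothing about $t<t_0$, and if instead the touching point sits at the top of the cylinder with $z_0$ in the interior of $B(y,R)$, the spatial gradient of your barrier vanishes there and the contradiction with the strict supersolution inequality degenerates. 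The classical (Nirenberg-type) proof requires two distinct propagation lemmas --- a space-like one inside a time level, with a barrier like yours, and a separate time-like one, typically with a paraboloid barrier of the form $|x-x_0|^2-\gamma(t_0-t)$ on a region lying below the maximum set --- chained along horizontal/downward paths. So the parabolic statement needs either this additional construction or, as the paper does, an appeal to \cite{DaLio3}.
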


This result is already proved in \cite[Corollary 2.4]{BarDa} and \cite[Corollary
2.4]{DaLio3}. Actually, it is obvious to see that (\ref{linearization}) belongs to the family of nonlinear elliptic equations of the form
\begin{equation}\label{model1}
c(x)|u|^{k-1}u-\Delta u + b(x)|Du|^p=0 \quad\hbox{ in } \mathcal{O}
\end{equation} with $b:=C<0,$ $p=1$ and $c\equiv0$ whereas (\ref{linearization1}) belongs to
the family of nonlinear parabolic equations of the form
\begin{equation}\label{model2}
u_t+c(x,t)|u|^{k-1}u-a(x,t)F(Du,D^2u)=0 \quad\hbox{ in }\mathcal{O}\times [0,T]
\end{equation} where $F:=Tr(X)+C|p|$ is positively homogeneous of degree $\alpha=1,$  $c\equiv0$ and $a\equiv 1.$
We then refer to \cite{BarDa} and \cite{DaLio3} for the test of the non degeneracy conditions and the scaling properties on
(\ref{linearization}) and (\ref{linearization1}).

\subsection{When is the stationary Dirichlet problem solvable?}\text{}

As earlier noted  in this work, one could think at first glance that the needed limit problem is $S(\Omega,f,g,0)$ (or
(\ref{6a})-(\ref{6b})). Here, we briefly address the question of the existence of a solution
for (\ref{6a}) for all $f$ through an illustration by a basic example taken from \cite{BaDa1} and show that (\ref{6a}) is not solvable for any bounded continuous $f.$

To see that, let $R$ and $C$ be two positive constants, we take $f:=-C^m<0$ and study
the one-dimensional problem consisting of finding solutions of
\begin{equation}\label{example}
-\eta''+|\eta'|^m=-C^m \ \ \ \hbox{ in } \ \ \ (-R,R).
\end{equation} To solve (\ref{example}), we integrate once and after some easy change of variable and computations, we find that $\eta'$
solves the equation $$
\dfrac{1}{C^{m-1}}\int_{\frac{\eta'(0)}{C}}^{\frac{\eta'(x)}{C}}
\dfrac{ds}{|s|^m+1}=x.
$$ It then follows that
$$C^{m-1}x=\int_{\frac{\eta'(0)}{C}}^{\frac{\eta'(x)}{C}}
\dfrac{ds}{|s|^m+1}\leq\int_{-\infty}^{+\infty}
\dfrac{ds}{|s|^m+1}
<\infty$$ since $m>1.$ Therefore, letting $x\rightarrow R,$ we obtain :
\begin{equation}\label{example*}
C^{m-1}\leq \dfrac{1}{R}\int_{-\infty}^{+\infty}
\dfrac{ds}{|s|^m+1}.
\end{equation}
The inequality (\ref{example*}) says roughly that for a given
interval $[-R,R],$ the ordinary differential equation
(\ref{example}) is not solvable for large $C,$ meaning that (\ref{6a})
is not solvable when $f<0$ is such that $|f|\gg1.$ The condition on the size of $f,$ as expressed above only occurs when $f<0.$ Indeed,
if $f\geq0,$ then the constant $-\|g\|_\infty$ is a subsolution of $S(\Omega,f,g,0)$ and the existence follows from the Perron's method and the strong comparison result for $S(\Omega,f,g,0).$

To complement this question, we consider the following parameterized version of (\ref{6a}):
\begin{equation}\label{parameter}
-\Delta w + |Dw|^{m}= \epsilon f(x)\,\,\,\ \text { in }\,\ \Omega
\end{equation}
with $\epsilon>0.$ From \cite{AlPi} or \cite{GreMuPo}, we find the existence of $\epsilon^*\in (0,+\infty)$ such that (\ref{parameter}) has at least a solution for $\epsilon\in(0,\epsilon^*)$ and there is no solution when $\epsilon>\epsilon^*.$ In general, one does not know what happens if $\epsilon=\epsilon^*.$ 

This lack of existence of a solution of $S(\Omega,f,g,0)$ gives a motivation of the use of the ``ergodic problem"
(\ref{112a})-(\ref{112b}) to explain the behavior of the solution $u$ of $E(\mathcal{O},f,g,u_0,\lambda)$ for all $\lambda\geq0.$
\section{ The stationary ergodic problem }\label{section3}
Throughout this section, we aim to study the existence of the pair $(c,u_\infty)\in \mathbb{R}\times C(\overline{\Omega})$ for which $u_\infty$ is
a viscosity solution of (\ref{112a})-(\ref{112b}). To do so, we detail a new proof, by using the approach of the viscosity solutions, of the following result which already appears in \cite[Theorem VI.3]{LaLi}. To stress on the dependence of (\ref{112a})-(\ref{112b}) on $f$ and $c,$ hereafter we denote by $Erg(\Omega,f,c)$ the state constraint problem \begin{equation}\label{ergodic problem}
\left\{
\begin{array}{rl}
-\Delta w+|Dw|^m&=f+c \quad\hbox{ in } \Omega\nonumber\\
-\Delta w+|Dw|^m&\geq f+c \quad\hbox{ on } \partial\Omega.\nonumber
\end{array}
\right.
\end{equation}
\begin{theorem}\label{112}\text{}
Assume that $f\in C(\overline{\Omega})$ and $m>2.$ Then there exists  $c\in\mathbb{R}$ and a function $u_\infty\in  C^{0,\frac{m-2}{m-1}}(\overline{\Omega}) \cap W_{loc}^{1,\infty}(\Omega)$ such that $u_\infty$ is a viscosity solution of $Erg(\Omega,f,c)$. 
Moreover if $\tilde{v}$ is a viscosity solution of $Erg(\Omega,f,\tilde{c}),$
then $c=\tilde{c}$ and $u_\infty=\tilde{v}+K$ for some constant $K.$
\end{theorem}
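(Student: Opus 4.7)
The plan is to use the vanishing discount method, cast in the viscosity framework. For each $\lambda>0$, Theorem \ref{13'''} produces a unique continuous viscosity solution $w_\lambda$ of $S(\Omega,f,0,\lambda)$. Since $\pm\|f\|_\infty/\lambda$ are respectively super- and sub-solutions, Theorem \ref{scr1} yields
\begin{equation*}
\|\lambda w_\lambda\|_\infty\leq\|f\|_\infty,
\end{equation*}
and the subsolution property (which retains the Dirichlet condition from above even for $m>2$) forces $w_\lambda\leq 0$ on $\partial\Omega$. From here the program is: (i) establish $\lambda$-uniform interior Lipschitz and boundary H\"older regularity for $w_\lambda$; (ii) set $v_\lambda:=w_\lambda-w_\lambda(x_0)$ for a fixed $x_0\in\Omega$ and extract a convergent subsequence; (iii) identify the limit as $(u_\infty,c)$ solving $Erg(\Omega,f,c)$; (iv) prove uniqueness up to an additive constant.

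The main obstacle is step (i): I need to show that $\{w_\lambda\}_{\lambda>0}$ is bounded, uniformly in $\lambda$, in $W^{1,\infty}_{\mathrm{loc}}(\Omega)\cap C^{0,(m-2)/(m-1)}(\overline{\Omega})$. The interior Lipschitz bound will come from a Bernstein-type argument on $|Dw_\lambda|^2$; the superquadratic term $|Dw_\lambda|^m$ is exactly strong enough to absorb both the perturbation $\lambda w_\lambda$ (controlled by Step 1) and the source $f$. For the boundary behaviour I will build explicit super- and sub-solution barriers of the form $\pm C\, d(x)^{(m-2)/(m-1)}$ with $d(x)=\mathrm{dist}(x,\partial\Omega)$; their validity crucially uses $m>2$ and produces a H\"older modulus up to $\partial\Omega$ independent of $\lambda$. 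As a by-product, $\mathrm{osc}_{\overline{\Omega}}\, w_\lambda$ is bounded uniformly in $\lambda$.

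Given these estimates, Arzel\`a--Ascoli extracts $\lambda_n\downarrow 0$ along which $v_{\lambda_n}\to u_\infty$ uniformly on $\overline{\Omega}$, while $\lambda_n w_{\lambda_n}(x_0)\to -c$ for some $c\in\mathbb{R}$; the oscillation bound promotes this to $\lambda_n w_{\lambda_n}\to -c$ uniformly. Rewriting $-\Delta w_{\lambda_n}+|Dw_{\lambda_n}|^m=f-\lambda_n w_{\lambda_n}$ and invoking classical stability of viscosity solutions yields $-\Delta u_\infty+|Du_\infty|^m=f+c$ in $\Omega$. For the state-constraint inequality on $\partial\Omega$, I test against a smooth $\varphi$ at a boundary minimum of $u_\infty-\varphi$ and apply the relaxed supersolution inequality for $w_{\lambda_n}$: at nearby minima, $\max\{-\Delta\varphi+|D\varphi|^m+\lambda_n w_{\lambda_n}-f,\; w_{\lambda_n}\}\geq 0$; since $w_{\lambda_n}\leq 0$ on $\partial\Omega$, the second alternative is non-positive, so the PDE alternative must hold and passes to the limit, giving $-\Delta\varphi+|D\varphi|^m\geq f+c$ at the test point.

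For uniqueness, given $(c,u_\infty)$ and $(\tilde{c},\tilde{v})$ solving $Erg$, the convexity of $p\mapsto|p|^m$ makes $u_\mu:=\mu u_\infty+(1-\mu)\tilde{v}$ a subsolution of the equation with right-hand side $f+\mu c+(1-\mu)\tilde{c}$. After an affine perturbation this yields strict subsolutions of $Erg$ for parameters slightly above $\min(c,\tilde{c})$, and confronting these with the state-constraint supersolution side forces $c=\tilde{c}$. With $c=\tilde{c}$, the difference $w:=u_\infty-\tilde{v}$ satisfies, in the linearized viscosity sense, an inequality of the form $-\Delta w-C|Dw|\leq 0$ in $\Omega$, with $C$ a local Lipschitz constant of $p\mapsto|p|^m$ on the bounded gradient set inherited from the $W^{1,\infty}_{\mathrm{loc}}$ regularity of Step 2. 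The strong maximum principle of Lemma \ref{linear*} then forces $w$ to be constant on $\Omega$, hence on $\overline{\Omega}$ by continuity, giving $u_\infty=\tilde{v}+K$.
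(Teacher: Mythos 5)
There is a genuine gap, and it is located at the very first step: the choice of the approximating problems. You take $w_\lambda$ to be the solution of the generalized Dirichlet problem $S(\Omega,f,0,\lambda)$ with boundary datum $0$, and hope that the normalized limits solve the \emph{state-constraint} problem $Erg(\Omega,f,c)$. This fails. Consider $f\equiv 0$: then $w_\lambda\equiv 0$ is a classical solution of $S(\Omega,0,0,\lambda)$, hence by Theorem \ref{13'''} it is \emph{the} solution, so your scheme produces $c=0$ and $u_\infty\equiv 0$. But $u_\infty\equiv 0$ does not satisfy the state-constraint inequality on $\partial\Omega$ (test with $\varphi=-Kd$ on a small ball: $-\Delta\varphi+|D\varphi|^m=K\Delta d+2N+K^m$ can be negative for moderate $K$), and by Proposition \ref{ergodic3} the true ergodic constant for $f\equiv0$ is strictly negative (e.g.\ $u=\varepsilon|x|^2/2$ is a subsolution with right-hand side $-\varepsilon N+O(\varepsilon^m)<0$). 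Your attempted rescue of the boundary inequality — ``since $w_{\lambda_n}\le 0$ on $\partial\Omega$ the second alternative in $\max\{\hbox{PDE},\,w_{\lambda_n}\}\ge 0$ is non-positive, hence the PDE alternative holds'' — is exactly where the logic breaks: when $w_{\lambda_n}=0$ at the boundary point the maximum is realized by the (vanishing) boundary term and nothing forces the PDE inequality. The paper avoids this by solving $S(\Omega,f,R,\lambda)$ with $R>K/\lambda$ large: the explicit state-constraint barrier $-\frac{M}{\alpha}d^{\alpha}+K/\lambda$ then dominates $u_{R,\lambda}$ and keeps it strictly below $R$ everywhere, so the Dirichlet condition is \emph{entirely} lost and $u_{R,\lambda}=u_\lambda$ is already a state-constraint solution for each $\lambda$ (Lemmas \ref{130'}--\ref{121}); only then does the vanishing-discount limit inherit the state constraint.

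A secondary gap sits in your uniqueness argument. The linearized inequality $-\Delta w-C|Dw|\le 0$ for $w=u_\infty-\tilde v$ only holds on $\Omega_\delta$ with a constant $C=C(\delta)\sim m\delta^{-1}$ that blows up as $\delta\to 0$, because the interior gradient bound (\ref{130}) degenerates at $\partial\Omega$. Lemma \ref{linear*} therefore only tells you that $w$ is constant \emph{if} its global maximum over $\overline{\Omega}$ is attained in some $\Omega_\delta$; you still have to rule out that the maximum sits only on $\partial\Omega$. The paper does this by a separate argument in the boundary layer $\Omega^\delta$, constructing the strict subsolution $\chi=\frac{\nu}{\alpha}d^\alpha-C$ and using Corollary \ref{scr2} together with the state-constraint boundary condition to push $\sup_{\overline{\Omega^\delta}}(\tilde v-v)$ onto the interior interface $\Gamma_\delta$. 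Your proposal omits this step, and also glosses over the regularity ($W^{2,p}$ in $\Omega_\delta$, via the truncated equation and Calderon--Zygmund) needed to make the linearization rigorous. The rest of your outline (interior gradient bounds, chaining to a global $C^{0,\frac{m-2}{m-1}}$ estimate, Arzel\`a--Ascoli, stability, and the comparison argument for uniqueness of $c$) matches the paper's strategy and is sound once the two gaps above are repaired.
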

Hereafter, we define the distance from $\partial\Omega$ to $x\in \overline{\Omega}$ by
\begin{equation}\label{distance}
d_{\partial\Omega}(x):=dist(x,\partial\Omega)=\inf \{ |x-y| \text{ with } y
\in\partial\Omega\}.
\end{equation}
For some $\delta>0,$ we denote by
\begin{equation}\label{setext}
\Omega^{\delta}:=\{x\in \Omega : d_{\partial\Omega}(x)< \delta\},
\end{equation}
\begin{equation}\label{setint}
\Omega_{\delta}:=\{x\in \Omega : d_{\partial\Omega}(x)> \delta\}.
\end{equation} 
Since $\Omega$ is $C^2,$ the function $d_{\partial\Omega}$ is also $C^2$
in a neighborhood of the boundary, say in $\Omega_{\delta}$ for all $0<\delta\leq \delta_0.$ We denote by $d$ any positive $C^2$ function
agreing with $d_{\partial\Omega}$ in $\Omega^\delta$ for all
$0<\delta\leq \delta_0$ such that $|Dd|\leq 1 \hbox{ in } \Omega_\delta$ and by $n$ a $C^1$- function defined by
\begin{equation}\label{normal}
n(x)=-Dd(x)\ \ \text{ in }\ \ \Omega^{\delta_0}.
\end{equation} If $x\in \partial\Omega,$ then $n(x)$ is just the unit outward normal vector to $\partial\Omega$ at $x.$

We are going to use the following classical result which may require to choose a smaller value of $\delta_0$.
\begin{lemma}\label{connectedness} Under the above assumptions on $\Omega$, for any $0\leq \delta\leq \delta_0$,
$\Omega_{\delta}$ is $C^1$-pathwise connected and $C^2$ domain. For $x,y \in \Omega_{\delta}$, we denote by $\mathcal{A}_{x,y}(\Omega_{\delta})$ the nonempty set
\begin{equation}\label{admissible}
\biggl\{\gamma_{x,y}:[0,1]\rightarrow\overline{\Omega_{\delta}}, \gamma_{x,y} \hbox{ is } C^1\hbox{- piecewise }  \hbox{ with } \gamma_{x,y}(0)=x, \gamma_{x,y}(1)=y\biggr\},
\end{equation}
and by $\tilde{d_\delta}$ the function defined by
\begin{equation}\label{lenght1}
\tilde{d_\delta}(x,y):=\underset{\gamma_{x,y}\in\mathcal{A}_{x,y}(\Omega_{\delta})}\inf\int_0^1|\dot{\gamma}_{x,y}(t)|dt\; .
\end{equation}
Then $\tilde{d_\delta}$ satisfies
\begin{equation}\label{lenght}
\tilde{d}(x,y)\leq C_{\Omega}
\end{equation} for some constant $C_{\Omega}>0$ and there exists ${\bar \delta}, {\bar K} >0$ such that, if $|x-y|\leq {\bar \delta},$ then
\begin{equation}\label{lenght2}
\tilde{d_\delta}(x,y)\leq {\bar K}|x-y|.
\end{equation}
\end{lemma}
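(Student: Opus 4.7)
The plan is to exploit the $C^2$-regularity of the signed distance function $d_{\partial\Omega}$ in a tubular neighborhood of $\partial\Omega$. Shrinking $\delta_0$ if necessary, I would first arrange that the normal exponential map $(y,s)\mapsto y-s\,n(y)$ is a $C^1$-diffeomorphism from $\partial\Omega\times[0,2\delta_0]$ onto $\overline{\Omega^{2\delta_0}}$, so that $d_{\partial\Omega}\in C^2(\Omega^{2\delta_0})$ with $|Dd_{\partial\Omega}|\equiv 1$ and every level set $\{d_{\partial\Omega}=\delta\}$, $\delta\in[0,\delta_0]$, is a $C^2$ hypersurface. This yields the ``$C^2$ domain'' assertion. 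A further reduction of $\delta_0$ makes the inclusion $\Omega_{\delta_0}\hookrightarrow\Omega$ a homotopy equivalence (deform $\Omega$ onto $\Omega_{\delta_0}$ by pushing points of $\Omega^{\delta_0}$ inward along normal trajectories), so the connectedness of $\Omega$ transfers to every $\Omega_\delta$ with $\delta\in[0,\delta_0]$; being open in $\mathbb{R}^N$ with $C^2$ boundary, each $\Omega_\delta$ is $C^1$-pathwise connected.

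For the uniform bound (\ref{lenght}), I would fix the compact core $K:=\overline{\Omega_{\delta_0}}$. Its intrinsic path distance $\tilde d_{\delta_0}$ is continuous on the compact set $K\times K$, hence bounded by $D:=\sup_{z,w\in K}\tilde d_{\delta_0}(z,w)<+\infty$. Given $x,y\in\Omega_\delta$ with $\delta\leq\delta_0$, I concatenate three pieces: the inward normal segment $t\mapsto x+t\,Dd_{\partial\Omega}(x)$ for $t\in[0,\delta_0-d(x)]$, of length $\leq\delta_0$, ending at some $\tilde x\in\partial\Omega_{\delta_0}$ (and staying in $\Omega_\delta$ since $d$ is nondecreasing along it); a $C^1$-piecewise path in $K$ from $\tilde x$ to an analogous $\tilde y$ of length $\leq D$; and the reversed normal segment from $\tilde y$ to $y$. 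The total length is at most $C_\Omega:=D+2\delta_0$.

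For (\ref{lenght2}) I would split into two cases depending on whether the points sit in the tubular neighborhood. Pick $\bar\delta\in(0,\delta_0)$. \emph{Case 1:} $\max(d(x),d(y))\geq\delta_0+\bar\delta$. Since $d_{\partial\Omega}$ is $1$-Lipschitz, every $z\in[x,y]$ satisfies $d(z)\geq\max(d(x),d(y))-|x-y|\geq\delta_0\geq\delta$, so the straight segment lies in $\Omega_\delta$ and gives $\tilde d_\delta(x,y)\leq|x-y|$. \emph{Case 2:} both $x,y\in\overline{\Omega^{\delta_0+\bar\delta}}\subset\overline{\Omega^{2\delta_0}}$. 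Write $x=\pi(x)-d(x)\,n(\pi(x))$ and similarly for $y$, with $\pi$ Lipschitz. Reducing $\bar\delta$ further so that $\pi(x),\pi(y)$ are joined on $\partial\Omega$ by a $C^1$ curve $\pi_t$ of length $\leq\bar K_1|\pi(x)-\pi(y)|$ (via the compactness and $C^2$ structure of $\partial\Omega$), I would set
\[
\gamma(t):=\pi_t-s_t\,n(\pi_t),\qquad s_t:=(1-t)d(x)+t\,d(y),\qquad t\in[0,1].
\]
Then $d(\gamma(t))=s_t\geq\min(d(x),d(y))>\delta$, so $\gamma\subset\Omega_\delta$, and the direct bound $|\dot\gamma(t)|\leq(1+2\delta_0\|Dn\|_\infty)|\dot\pi_t|+|\dot s_t|$, combined with the tubular-Lipschitz estimate $|\pi(x)-\pi(y)|+|d(x)-d(y)|\leq C|x-y|$, yields $\tilde d_\delta(x,y)\leq\bar K|x-y|$.

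The main obstacle is making sure the constants $C_\Omega$, $\bar\delta$, $\bar K$ are independent of $\delta\in[0,\delta_0]$. This uniformity is precisely what the $C^2$ tubular-neighborhood delivers: a $\delta$-independent Lipschitz constant for the projection $\pi$, a uniform bound $\|Dn\|_\infty<+\infty$ on $\partial\Omega$, and a core $K=\overline{\Omega_{\delta_0}}$ fixed once and for all, whose path-diameter $D$ is a purely geometric constant of $\Omega$.
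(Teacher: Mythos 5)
Your argument is correct. Note that the paper offers no proof of this lemma at all: it is stated as "a classical result which may require to choose a smaller value of $\delta_0$" and immediately followed by the proof of Theorem \ref{112}, so there is no in-paper argument to compare against. What you supply is the standard tubular-neighborhood proof, and it is sound: the normal exponential map gives the $C^2$ structure of the level sets $\Gamma_\delta$ and the connectedness of $\Omega_\delta$ by retraction; the uniform bound (\ref{lenght}) follows from concatenating two normal segments of length at most $\delta_0$ with a path in the fixed compact core $\overline{\Omega_{\delta_0}}$; and the local estimate (\ref{lenght2}) follows from the dichotomy "straight segment if one point is deep inside / interpolated path $\pi_t-s_t n(\pi_t)$ if both are near the boundary". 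You correctly handle the two points where the statement has real content, namely that all constants are independent of $\delta\in[0,\delta_0]$ (your core $K$, the reach $2\delta_0$, the Lipschitz constant of the projection $\pi$ and the bound on $Dn$ are all fixed once and for all) and that the connecting paths genuinely stay in $\overline{\Omega_\delta}$ (because $d$ increases along inward normal trajectories and $d(\gamma(t))=s_t\geq\min(d(x),d(y))$ on the interpolated path). The only step left slightly implicit is the existence of the boundary curve of length at most $\bar K_1|\pi(x)-\pi(y)|$ for nearby boundary points, but this is the standard local-graph representation of a compact $C^2$ hypersurface with uniform constants by compactness, and invoking it at this level of detail is appropriate for a lemma the paper itself treats as classical.
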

The rest of this section is devoted to the proof of Theorem \ref{112}.
\begin{lemma}\label{130'}
Let $f\in C(\overline{\Omega})$ and $m>2.$ For any $R>0$ and $0<\lambda<1,$ there exists a unique
viscosity solution $u_{R, \lambda}$ of the generalized Dirichlet problem $S(\Omega,f,R,\lambda).$
Moreover, $u_{R, \lambda}$ satisfies the following estimates
\begin{equation}\label{R-Bounds}
-\frac{\|f\|_{\infty}}{\lambda}\leq u_{R, \lambda}\leq
-\frac{M}{\alpha}{d}^ {\alpha}(x) +\frac{K}{\lambda} \quad\hbox{ in } \Omega
\end{equation} with $\alpha=\frac{m-2}{m-1}$ and $M,K> 0$ are independent of $R$ and $\lambda.$
\end{lemma}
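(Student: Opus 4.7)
The existence and uniqueness of the continuous viscosity solution $u_{R,\lambda}$ of $S(\Omega,f,R,\lambda)$ are immediate from Theorem \ref{13'''} applied in the case $m>0,\ \lambda>0$, so it suffices to establish the two-sided bound (\ref{R-Bounds}). The lower bound is obtained by noting that the constant function $\underline{u}\equiv -\|f\|_\infty/\lambda$ is a classical subsolution, since $-\Delta\underline{u}+|D\underline{u}|^m+\lambda\underline{u}=-\|f\|_\infty\leq f$, and satisfies $\underline{u}\leq 0<R$ on $\partial\Omega$; Theorem \ref{scr1} then yields $u_{R,\lambda}\geq -\|f\|_\infty/\lambda$.

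The upper bound is the delicate part. The plan is to construct a barrier
\begin{equation*}
\chi(x):=-\frac{M}{\alpha}\,d^\alpha(x)+\frac{K}{\lambda},\qquad \alpha=\frac{m-2}{m-1}\in(0,1),
\end{equation*}
and to verify that $\chi$ is a generalized supersolution of $S(\Omega,f,R,\lambda)$ for constants $M,K>0$ chosen independently of $R$ and $\lambda$. The crucial algebraic identity is $\alpha-2=-m/(m-1)$. In the strip $\Omega^{\delta_0}$, where $d$ agrees with $d_{\partial\Omega}$ and $|Dd|=1$, a direct computation gives
\begin{equation*}
-\Delta\chi+|D\chi|^m = \bigl[M^m+M(\alpha-1)\bigr]\,d^{-m/(m-1)}+M\,d^{\alpha-1}\,\Delta d.
\end{equation*}
Since $\alpha<1$, fixing $M^{m-1}>1-\alpha$ makes the first coefficient positive, and because $-m/(m-1)<\alpha-1$ the first term dominates the second as $d\to 0$. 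One can therefore choose $\delta_1\in(0,\delta_0]$ (depending only on $m$, $M$, $\|\Delta d\|_\infty$, $\|f\|_\infty$) such that $-\Delta\chi+|D\chi|^m+\lambda\chi\geq f$ holds on $\Omega^{\delta_1}$, the contribution of $\lambda\chi$ being bounded uniformly in $\lambda\in(0,1)$. On the complementary set $\Omega_{\delta_1}$ the derivatives of $\chi$ are bounded by a constant $C_M$ depending only on $M$ and $\delta_1$, and $\lambda\chi\geq K-M(\mathrm{diam}\,\Omega)^\alpha/\alpha$ since $\lambda<1$; it then suffices to take $K:=\|f\|_\infty+C_M+M(\mathrm{diam}\,\Omega)^\alpha/\alpha$ to close the supersolution inequality.

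It remains to check the generalized Dirichlet condition for $\chi$ on $\partial\Omega$. At each $x_0\in\partial\Omega$ the inward normal derivative of $\chi$ blows up: if some $\varphi\in C^2$ were a test function with $\chi-\varphi$ attaining a local minimum at $x_0$ in $\overline{\Omega}$, then choosing $x=x_0-tn(x_0)\in\Omega$ would force
\begin{equation*}
\varphi(x_0)-\varphi(x_0-tn(x_0))\geq \frac{M}{\alpha}\,t^\alpha,
\end{equation*}
which, after dividing by $t$ and letting $t\to 0^+$, contradicts $\varphi\in C^1$ because $\alpha<1$. Hence no test function touches $\chi$ from below at boundary points and the supersolution condition is vacuously satisfied there. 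A final application of Theorem \ref{scr1} to the subsolution $u_{R,\lambda}$ and the supersolution $\chi$ concludes the proof. The main obstacle throughout is obtaining the $R$- and $\lambda$-independence of the upper bound: this is made possible precisely by the loss of boundary condition allowed in the generalized formulation when $m>2$, since $\chi(x_0)=K/\lambda$ can be much smaller than $R$ yet the singular gradient of $\chi$ near $\partial\Omega$ rescues the supersolution property.
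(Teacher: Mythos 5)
Your proof is correct and follows essentially the same route as the paper: existence/uniqueness from the comparison result plus Perron's method, the constant $-\|f\|_\infty/\lambda$ as subsolution, the barrier $-\frac{M}{\alpha}d^\alpha+\frac{K}{\lambda}$ as a supersolution verified separately near and away from $\partial\Omega$, and the vacuous boundary condition via the blow-up of the normal derivative (the paper cites an adaptation of \cite[Proposition 3.3]{BaDa1} where you spell out the $J^{2,-}=\emptyset$ argument directly). The only differences are cosmetic choices of where the constants $M$, $K$ and the splitting parameter $\delta$ absorb the lower-order terms.
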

\begin{proof}[\bf Proof of Lemma \ref{130'}]
The existence of the solution $u_{R,\lambda}$ of $S(\Omega,f,R,\lambda)$ follows from a combination of the strong
comparison result for $S(\Omega,f,R,\lambda)$ (Theorem \ref{scr1}) and the classical Perron's method
(see \cite{Ishi}) with the version up to the boundary provided in
Da Lio \cite[Theorem 2.1]{DaLio1}. It is therefore sufficient to build a sub- and supersolution of $S(\Omega,f,R,\lambda).$ 

It is easy to see that the constant $-\|f\|_{\infty}/\lambda$ is a subsolution
$S(\Omega,f,R,\lambda).$ 
We claim that $ \zeta(x):=
-\frac{M}{\alpha}{d}^ {\alpha}(x)+\frac{K}{\lambda}$ is a supersolution of $S(\Omega,f,R,\lambda)$ with state constraints on $\partial\Omega.$
Indeed, in $\Omega$ we compute:
\begin{eqnarray}\label{Bounds}
-\Delta \zeta + |D\zeta|^{m}+\lambda \zeta -f&=&(\alpha
-1)Md^{\alpha -2}|Dd|^2+Md^{\alpha -1}\Delta d \\
 &&+M^{m}d^{m(\alpha -1)}|Dd|^m-\lambda\frac{M}{\alpha}d^{\alpha}+
K-f\nonumber\\
&=& \frac{M}{d^{m(1-\alpha)}}\biggl[M^{m-1}|Dd|^m+d\Delta d
\nonumber\\ &&+(\alpha-1)|Dd|^2 - \frac{\lambda}{\alpha}
d^2\biggr]+K-f.\nonumber
\end{eqnarray} In $\Omega^{\delta}$ where $|Dd|=1$ and $0\leq d\leq \delta,$
(\ref{Bounds}) reads
\begin{eqnarray}\label{bounds}
-\Delta \zeta + |D\zeta|^{m}+\lambda \zeta
-f&=&\frac{M}{d^{m(1-\alpha)}}\biggl[(\alpha-1)+d\Delta d+M^{m-1}- \frac{\lambda}{\alpha}
d^2\biggr]+K-f. \nonumber
\end{eqnarray}
It then suffices to choose the constant $M$ and $K$ as follows
\begin{equation}\label{M-bounds} M^{m-1}>(1-\alpha)+\delta\|\Delta d\|_\infty +\frac{\delta^{2}}{\alpha} \quad \text{ and } K\geq 2\|f\|_\infty
\end{equation} to get $-\Delta \zeta + |D\zeta|^{m}+\lambda \zeta
\geq f \hbox{ in } \Omega^\delta.$ Moreover, for all $x \in\Omega,$ we have
\begin{equation}
\partial_n\zeta(x):=D \zeta(x)\cdot n(x)=Md^{\alpha-1}(x)|Dd(x)|^2=Md^{-\frac{1}{m-1}}(x)|Dd(x)|^2,\nonumber
\end{equation} and one obviously obtains that
\begin{equation}
\underset{d(x)\to0}\lim\partial_n\zeta(x)=+\infty\nonumber.
\end{equation}
From an easy adaptation of \cite[Proposition 3.3]{BaDa1}, we have $J^{2,-}\zeta(x)=\emptyset,$ where
$J^{2,-}\zeta(x)$ is the second-order subjet of $\zeta$ at $x$ relative to $\overline{\Omega},$
meaning that there is no smooth test function $\varphi$ such that
$\zeta-\varphi$ achieves its local minimum point at $x\in \partial\Omega$ and thus state constraint boundary
condition is automatically satisfied.

To conclude that the function $\zeta$ is a solution of
\begin{equation}\label{130bis}
-\Delta w + |Dw|^{m}+\lambda w= f \,\,\,\,\ \text { in }\,\ \overline{\Omega}
\end{equation} it remain to prove the supersolution inequality in $\Omega_{\delta}.$ 
In this case, since $0<\lambda<1$ and $|Dd|\leq 1,$ (\ref{Bounds}) reads
\begin{eqnarray}
-\Delta \zeta + |D\zeta|^{m}+\lambda \zeta -f
&\geq&-(1-\alpha)Md^{\alpha -2}+Md^{\alpha -1}\Delta d -\frac{M}{\alpha}d^{\alpha}-\|f\|_\infty+
K\nonumber.
\end{eqnarray} As done before,
it suffices to take the constants $M$ as in (\ref{M-bounds}) and $K$ such that
\begin{equation}\label{K-bounds*}
K>M\biggl[(1-\alpha)\|d^{\alpha -2}\|_\infty+\|d^{\alpha -1}\Delta d\|_\infty +\frac{1}{\alpha}\|d^{\alpha}\|_\infty\biggr]+3\|f\|_\infty
\end{equation} to obtain $-\Delta \zeta + |D\zeta|^{m}+\lambda \zeta
\geq f \hbox{ in } \Omega_\delta$ thus ending the proof that $\zeta$ is a supersolution of
$S(\Omega,f,R,\lambda)$ with state constraint on $\partial\Omega.$ A final application of the strong comparison result for $S(\Omega,f,R,\lambda)$ to
$-\frac{\|f\|_{\infty}}{\lambda},$ $u_{R, \lambda}$ and $\zeta$ easily yields the estimate (\ref{R-Bounds}).
\end{proof}
We continue with the
\begin{lemma}\label{121}\text{}
For $0<\lambda<1$ and $m>1,$ there exists a unique viscosity solution $u_{ \lambda}$ 
of the state constraint problem
\begin{equation}\label{121a}
-\Delta u_{ \lambda} + |Du_{ \lambda}|^{m}+\lambda u_{ \lambda}= f
\,\,\,\,\ \text { in }\,\ \Omega
\end{equation}
\begin{equation}\label{121b}
-\Delta u_{ \lambda} + |Du_{ \lambda}|^{m}+ \lambda u_{
\lambda}\geq f \,\,\,\,\ \text { on }\,\ \partial \Omega
\end{equation} and there exists a constant $C_1>0$ such that for all $0<\lambda<1,$ we have
\begin{equation}\label{126}
|\lambda u_{\lambda}|\leq C_1 \text{ in } \overline{\Omega}.
\end{equation}
\end{lemma}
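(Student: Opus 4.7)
The plan is to realize $u_\lambda$ as the very solution $u_{R,\lambda}$ of the Dirichlet problem $S(\Omega,f,R,\lambda)$ provided by Lemma \ref{130'}, once $R$ is chosen large enough that the boundary condition is lost at every point of $\partial\Omega$. The key observation is that the upper bound $\zeta(x) = -\frac{M}{\alpha}d^\alpha(x) + K/\lambda$ in (\ref{R-Bounds}) is bounded on $\overline{\Omega}$ by $K/\lambda$ and, crucially, independent of $R$ (in the superquadratic regime $m>2$, the exponent $\alpha=(m-2)/(m-1)>0$ makes $\zeta$ continuous up to the boundary). Fix any $R > K/\lambda$. Then $u_{R,\lambda}(x_0) \le K/\lambda < R$ for every $x_0 \in \partial\Omega$, so in the generalized Dirichlet supersolution inequality
\[
\max\bigl\{-\Delta\varphi(x_0)+|D\varphi(x_0)|^m+\lambda u_{R,\lambda}(x_0)-f(x_0),\ u_{R,\lambda}(x_0)-R\bigr\} \geq 0,
\]
the second entry is strictly negative, which forces the PDE inequality (\ref{121b}) pointwise on $\partial\Omega$. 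Combined with the PDE equality in $\Omega$, this shows $u_\lambda := u_{R,\lambda}$ is a viscosity solution of the state constraint problem. The main subtlety is precisely this verification that loss of boundary condition happens everywhere for $R$ large enough, which is what the positivity $\alpha>0$ is used for.

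For uniqueness, let $\tilde u \in C(\overline{\Omega})$ be any viscosity solution of (\ref{121a})-(\ref{121b}). Continuity on the compact set $\overline{\Omega}$ makes $\tilde u$ bounded, so I may fix $R > \max(\|\tilde u\|_\infty,\,K/\lambda)$. I would then check that $\tilde u$ is simultaneously a sub- and a supersolution of $S(\Omega,f,R,\lambda)$: the interior inequalities come directly from (\ref{121a}); on $\partial\Omega$, the subsolution test $\min\{\cdots,\tilde u - R\} \le 0$ is trivial because $\tilde u - R < 0$, while the supersolution test $\max\{\cdots,\tilde u - R\} \ge 0$ holds because (\ref{121b}) makes the first entry nonnegative. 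Theorem \ref{13'''} then forces $\tilde u = u_{R,\lambda} = u_\lambda$, giving uniqueness.

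Finally, the uniform bound (\ref{126}) is the easy part: multiplying (\ref{R-Bounds}) by $\lambda > 0$ gives $-\|f\|_\infty \le \lambda u_\lambda \le K$ (because $d^\alpha \ge 0$), and a careful reading of (\ref{M-bounds}) and (\ref{K-bounds*}) shows that the constant $K$ depends only on $m$, $\alpha$, $\|f\|_\infty$ and $\Omega$ -- the restriction $0 < \lambda < 1$ is precisely what allows the $\lambda$-terms in those computations to be absorbed without letting $K$ grow like $1/\lambda$. Taking $C_1 := \max(K,\|f\|_\infty)$ then yields (\ref{126}) uniformly in $\lambda \in (0,1)$.
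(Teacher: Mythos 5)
Your argument is correct and is essentially the paper's own proof: both identify $u_\lambda$ with $u_{R,\lambda}$ for $R>K/\lambda$ using the $R$-independent upper bound in (\ref{R-Bounds}) to force loss of the Dirichlet condition everywhere on $\partial\Omega$, and both read off (\ref{126}) directly from (\ref{R-Bounds}). Your uniqueness step (showing any continuous state-constraint solution is simultaneously a sub- and supersolution of $S(\Omega,f,R,\lambda)$ for $R$ large and invoking Theorem \ref{13'''}) is a welcome, more explicit rendering of the paper's terser appeal to that theorem.
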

\begin{proof}[\bf Proof of Lemma \ref{121}] 
Let $K$ be such in (\ref{K-bounds*}), we take $R_\lambda:=\frac{K}{\lambda}$ and find from Lemma \ref{130'} that $$u_{R,\lambda}<R\hbox{ on }\overline\Omega\quad \hbox{ for all } R>R_\lambda.$$ By definition of the generalized Dirichlet problem $S(\Omega,f,R,\lambda)$ for $R>R_\lambda,$ we find that  $u_{R,\lambda}$ is a viscosity solution of (\ref{121a})-(\ref{121b}) for all $R>R_\lambda.$ But, from Theorem \ref{13'''}, there exists a unique viscosity solution, $u_\lambda$ of (\ref{121a})-(\ref{121b}), meaning that $u_\lambda=u_{R,\lambda}$ for all $R>R_\lambda.$

Coming back to (\ref{R-Bounds}), it is obvious to see that 
$$-\|f\|_{\infty}\leq\lambda
u_\lambda\leq K \ \hbox{ in } \overline{\Omega}$$ and (\ref{126}) follows by taking $C_1=\max\{\|f\|_{\infty},K\}.$
\end{proof}
Now, we state this important
\begin{proposition}[Local interior gradient bounds on $u_{\lambda}$]\label{122}\text{}\\
Assume $f\in C(\overline{\Omega})$ and $m>1.$ Let $u_\lambda$ be the function defined in Lemma \ref{121}. We have
\begin{equation}\label{130}
\|D u_{\lambda}(x)\|_{\infty}\leq \Lambda
d^{-\frac{1}{m-1}}(x)\ \text{ in }
\Omega
\end{equation} where $\Lambda$ depends on $m,$ $\overline{\Omega}$ and $f.$
\end{proposition}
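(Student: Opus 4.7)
The plan is to obtain the gradient bound by combining a natural rescaling with the Ishii--Lions viscosity Lipschitz technique, bootstrapped through a preliminary local H\"older estimate. Fix $x\in\Omega$ and set $\delta := d(x)/2$, so that $B(x,\delta)$ is strictly inside $\Omega$; we may assume $\delta\leq \delta_0\leq 1$ (the range $d(x)\geq 2\delta_0$ is handled by a standard interior Lipschitz estimate and absorbed in the final constant). Put $\alpha := (m-2)/(m-1)$ and define
\begin{equation*}
v(z) := \delta^{-\alpha}\bigl(u_\lambda(x+\delta z)-u_\lambda(x)\bigr),\qquad z\in B(0,1).
\end{equation*}
The exponent $\alpha$ is chosen so that $\alpha-2=(\alpha-1)m=-m/(m-1)$, i.e., so that the Laplacian, the superlinear Hamiltonian and the boundary barrier $\zeta$ of Lemma \ref{130'} share the same homogeneity. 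A direct substitution, together with the bound $|\lambda u_\lambda|\leq C_1$ from Lemma \ref{121}, shows that $v$ is a viscosity solution on $B(0,1)$ of
\begin{equation*}
-\Delta v + |Dv|^m + \tilde\lambda\, v = \tilde f(z),
\end{equation*}
with $\tilde\lambda := \lambda\delta^2\in[0,1]$ and $\|\tilde f\|_{L^\infty(B(0,1))}\leq \delta^{m/(m-1)}(\|f\|_\infty+C_1)$, both uniform in $\lambda$ and $x$. A gradient bound $|Dv(0)|\leq \Lambda_0(m,\Omega,\|f\|_\infty,C_1)$ would then translate back into
\begin{equation*}
|Du_\lambda(x)|=\delta^{\alpha-1}|Dv(0)|\leq 2^{1/(m-1)}\Lambda_0\,d(x)^{-1/(m-1)},
\end{equation*}
which is the desired estimate.

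The remaining task is a scale-invariant interior Lipschitz bound at the origin for viscosity solutions of a coercive Hamilton--Jacobi equation of the form above, the catch being that $v$ is not uniformly bounded a priori: the local oscillation of $u_\lambda$ on $B(x,\delta)$ is only controlled by $C/\lambda$ through Lemma \ref{130'}, which is not useful. I would therefore first establish a uniform local H\"older estimate: there exists $\kappa=\kappa(m,\Omega,\|f\|_\infty,C_1)$ such that
\begin{equation*}
|u_\lambda(y)-u_\lambda(y')|\leq \kappa\,|y-y'|^\alpha\qquad\text{for all } y,y'\in \Omega_{\delta_0/2},
\end{equation*}
uniformly in $\lambda\in(0,1)$. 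This is obtained by the Ishii--Lions doubling-of-variables method applied to
\begin{equation*}
\sup\bigl\{u_\lambda(y)-u_\lambda(y')-\kappa|y-y'|^\alpha-\eta\,\omega(y)-\eta\,\omega(y')\bigr\},
\end{equation*}
where $\omega$ is a fixed positive function blowing up on $\partial\Omega_{\delta_0/4}$ whose sole role is to force the optimal $(\bar y,\bar y')$ strictly into $\Omega_{\delta_0/2}$, so that the state-constraint boundary condition never enters the argument. At such an extremum the Crandall--Ishii lemma yields test gradients of size $\sim \kappa|\bar y-\bar y'|^{\alpha-1}$ and a Hessian-difference trace of size $\sim \kappa|\bar y-\bar y'|^{\alpha-2}$; the coercive contribution $|p|^m-|q|^m$ is precisely of order $\kappa^m|\bar y-\bar y'|^{(\alpha-1)m}=\kappa^m|\bar y-\bar y'|^{\alpha-2}$, so the two scales match, and subtracting the sub/supersolution inequalities (with $|\lambda u_\lambda|\leq C_1$ absorbing the zero-order contribution) forces a contradiction as soon as $\kappa$ is chosen large enough.

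With this H\"older control, $\|v\|_{L^\infty(B(0,1))}\leq \kappa$ uniformly in $\lambda$ and $x$, and only the classical interior Lipschitz estimate for bounded viscosity solutions of a coercive Hamilton--Jacobi equation on $B(0,1)$ remains. This is achieved by a second Ishii--Lions doubling, now with a linear modulus $L|z-z'|$ plus the customary quadratic penalty and a cutoff concentrating the maximum near the origin; once $L$ is sufficiently large, the coercive term $|p|^m$ dominates every other contribution and produces the required bound $|Dv(0)|\leq \Lambda_0$. The main technical obstacle is the H\"older step, specifically the selection of the localization $\omega$: it must be strong enough to expel the extremum from the set $\{d\leq \delta_0/4\}$ but tame enough that the extra terms it generates in the viscosity inequalities are absorbed by $|p|^m$ via the scaling exponent $\alpha$. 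The identity $\alpha-2=(\alpha-1)m$, already central to the barrier construction in Lemma \ref{130'}, is exactly what permits this absorption, and is the reason why the blow-up rate $d^{-1/(m-1)}$ is the correct one.
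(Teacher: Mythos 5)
First, note that the paper does not actually prove this proposition: it invokes \cite[Theorem IV.1]{LaLi} (a Bernstein-type interior gradient bound, requiring $f\in W^{1,\infty}$) and \cite[Theorem 3.1]{CapLeoPo} (which needs only $f\in L^\infty$), together with the uniform bound $|\lambda u_\lambda|\le C_1$ of Lemma \ref{121} to make the constant independent of $\lambda$. Your scaling computation is correct and well motivated: the identity $\alpha-2=(\alpha-1)m=-m/(m-1)$ does make $v(z)=\delta^{-\alpha}(u_\lambda(x+\delta z)-u_\lambda(x))$ solve $-\Delta v+|Dv|^m+\lambda\delta^2 v=\tilde f$ with $\|\tilde f\|_\infty\le \|f\|_\infty+C_1$, and a bound $|Dv(0)|\le\Lambda_0$ would indeed give (\ref{130}). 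This is essentially the architecture of \cite{CapLeoPo}. The problem lies in the step that is supposed to feed the scheme.

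The main gap is that your uniform H\"older estimate is only asserted, and only provable by your localized doubling, for $y,y'\in\Omega_{\delta_0/2}$: the penalization $\eta\,\omega$ expels the doubling maximum from a \emph{fixed} neighbourhood of $\partial\Omega$, and after letting $\eta\to0$ the estimate survives only on compact subsets of that fixed interior region. But the only nontrivial range of the proposition is $d(x)<\delta_0$, and for such $x$ the ball $B(x,d(x)/2)$ on which you need $\|v\|_{L^\infty(B(0,1))}\le\kappa$ lies entirely outside $\Omega_{\delta_0/2}$. What is actually required is the H\"older estimate on all of $\overline\Omega$ with a constant independent of $\lambda$ and of the distance to the boundary; an interior-localized Ishii--Lions argument cannot produce it, and pushing the localization to $\partial\Omega$ destroys the uniformity. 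The correct input is the global $C^{0,(m-2)/(m-1)}$ estimate for \emph{subsolutions} of superquadratic coercive equations (\cite[Theorem 2.7]{CapLeoPo}, or \cite{Barles3}), which holds up to the boundary with no boundary condition at all and is proved by a one-sided barrier comparison rather than by doubling. A secondary flaw: in your last step the linear modulus $L|z-z'|$ cannot work as described, since $|p|^m\sim L^m$ while the Crandall--Ishii matrix inequality only controls the second-order contribution by a quantity of size $L/|z-z'|$, which blows up near the diagonal; the coercive term matches the second-order term only for the sublinear modulus $r^{\alpha}$ --- which is precisely why coercivity alone yields H\"older, not Lipschitz, regularity. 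The interior Lipschitz bound for the bounded rescaled solution must instead come from the uniform ellipticity (Ishii--Lions with a strictly concave modulus) or from a (weak) Bernstein argument as in \cite{Barles2} or \cite{Lions1}.
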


This result is already proved in \cite{LaLi} and \cite{CapLeoPo}. In \cite{LaLi}, $f$ is required to belong to $W^{1,\infty}(\overline{\Omega})$ and we apply \cite[Theorem IV.1]{LaLi}, by taking $\beta=0$ and $C_1=C_2=\|f\|_\infty,$ 
to obtain Proposition \ref{122}  with $\Lambda$ depending only on $m,$ $\overline{\Omega}$ and $f.$ In \cite{CapLeoPo}, we just need to have $f\in L^\infty(\overline{\Omega})$ and we obtain Proposition \ref{122} from \cite[Theorem 3.1]{CapLeoPo} and $\Lambda,$ in this case, depends on $m$ and $\|d^{\frac{m}{m-1}}_{\partial\Omega}(f-\lambda u_\lambda)^+\|_{L^\infty(\Omega)}.$ But since $\lambda u_\lambda$ is uniformly bounded in $\lambda$ (see (\ref{126})), we obtain a constant $\Lambda$ free of any dependance in $\lambda.$

\bigskip

Now, we would like to let $\lambda$ go to $0.$ Inspired from
\cite{LaLi}, the device used to provide the convergence entirely
relies on the gradient interior bound method coupled with the
Ascoli's result. We choose arbitrary
$x^* \in \Omega_{\delta_0}$ and set
\begin{equation}\label{ergofunction}
v_{\lambda}(x):=u_{\lambda}(x)-u_{\lambda} (x^{*}) \,\,\ \ \ \
\forall x \in  \Omega.
\end{equation}
We start by focussing on the three following useful properties of $v_\lambda$ for any $0<\lambda<1.$
\begin{lemma}[Global bounds on $\Omega_\delta$]\label{deltabound1}\text{}\\
Let $f\in C(\overline{\Omega})$ and $m>2.$ For any $0<\lambda<1$ and for all $0<\delta<\delta_0,$ the function $v_{\lambda}$ is uniformly
bounded on $\Omega_\delta$ and satisfies
\begin{equation}\label{deltabound2}
|v_{\lambda}(x)|\leq \Lambda C_{\Omega}\delta^{-\frac{1}{m-1}}\ \ \ \text{ for all }\  x\in\Omega_{\delta}.
\end{equation}
\end{lemma}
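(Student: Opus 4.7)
The plan is to parlay the local interior gradient bound of Proposition \ref{122} into a Lipschitz estimate for $u_\lambda$ on the interior set $\Omega_\delta$, and then to integrate along $C^1$-piecewise paths furnished by Lemma \ref{connectedness} in order to control the oscillation of $u_\lambda$ relative to the base point $x^*$.

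First, I would observe that since $0<\delta\leq \delta_0$, the inclusion $\Omega_{\delta_0}\subset \Omega_\delta$ holds, so the reference point $x^*\in \Omega_{\delta_0}$ also lies in $\Omega_\delta$, and $v_\lambda(x^*)=0$ is well defined. By the distance function $d$ in (\ref{normal}), every point $z\in\Omega_\delta$ satisfies $d(z)\geq \delta$, so Proposition \ref{122} yields the uniform pointwise bound
\begin{equation}
|Du_\lambda(z)|\leq \Lambda\, d(z)^{-\tfrac{1}{m-1}}\leq \Lambda\,\delta^{-\tfrac{1}{m-1}}\quad\hbox{for all }z\in\Omega_\delta.\nonumber
\end{equation}
Since $u_\lambda\in W^{1,\infty}_{\mathrm{loc}}(\Omega)$, this pointwise gradient bound upgrades to Lipschitz continuity of $u_\lambda$ along any $C^1$-piecewise curve that remains in $\Omega_\delta$, with Lipschitz constant $\Lambda\,\delta^{-1/(m-1)}$.

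Next, given an arbitrary $x\in \Omega_\delta$, I would invoke Lemma \ref{connectedness}: $\Omega_\delta$ is $C^1$-pathwise connected, and for any admissible path $\gamma_{x^*,x}\in \mathcal{A}_{x^*,x}(\Omega_\delta)$, the fundamental theorem of calculus applied to $t\mapsto u_\lambda(\gamma_{x^*,x}(t))$ gives
\begin{equation}
|v_\lambda(x)|=|u_\lambda(x)-u_\lambda(x^*)|\leq \int_0^1|Du_\lambda(\gamma_{x^*,x}(t))|\,|\dot\gamma_{x^*,x}(t)|\,dt\leq \Lambda\,\delta^{-\tfrac{1}{m-1}}\int_0^1|\dot\gamma_{x^*,x}(t)|\,dt.\nonumber
\end{equation}
Taking the infimum over all admissible paths and using the definition (\ref{lenght1}) together with the bound (\ref{lenght}), I get
\begin{equation}
|v_\lambda(x)|\leq \Lambda\,\delta^{-\tfrac{1}{m-1}}\,\tilde d_\delta(x^*,x)\leq \Lambda\, C_\Omega\,\delta^{-\tfrac{1}{m-1}},\nonumber
\end{equation}
which is exactly (\ref{deltabound2}).

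The only mild subtlety—which I would phrase as the step requiring the most care—is the passage from the a.e. (pointwise) gradient bound to an absolutely continuous estimate along $\gamma_{x^*,x}$. Since $u_\lambda\in W^{1,\infty}_{\mathrm{loc}}(\Omega)$ by Proposition \ref{122}, this is classical: one may either approximate $\gamma_{x^*,x}$ by smooth curves and convolve $u_\lambda$ with a mollifier, or invoke directly the fact that Sobolev functions with essentially bounded gradient are Lipschitz with constant equal to the $L^\infty$-norm of the gradient on any convex (or more generally, quasiconvex) subset. Either way, the constant $\Lambda C_\Omega$ is independent of $\lambda\in(0,1)$ since $\Lambda$ depends only on $m$, $\overline\Omega$ and $f$ by the remark following Proposition \ref{122}.
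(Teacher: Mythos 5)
Your argument is correct and is essentially identical to the paper's proof: both integrate the interior gradient bound of Proposition \ref{122} along an admissible path in $\Omega_\delta$ and then use the bound $\tilde d_\delta(x,x^*)\leq C_\Omega$ from Lemma \ref{connectedness}. Your extra remark on upgrading the a.e.\ gradient bound to an estimate along the curve is a harmless elaboration of the paper's one-line appeal to local Lipschitz continuity.
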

\begin{proof}[\bf Proof of Lemma \ref{deltabound1}] Let $0<\delta<\delta_0,$ for any $x\in \Omega_{\delta},$ since $v_{\lambda}$ is locally Lipschitz by Proposition \ref{122} and (\ref{admissible}), for any $\gamma_{x,x^*}\in\mathcal{A}_{x,x^*}(\Omega_\delta),$  one gets 
\begin{eqnarray}
|v_{\lambda}(x)-v_{\lambda}(x^{*})|&\leq& \int_{0}^{1} \biggl|\frac{d}{ds}v_{\lambda}(\gamma_{x,x^*} (s))\biggr|ds\nonumber \\
&=& \int_{0}^{1}|Dv_{\lambda}(\gamma_{x,x^*} (s))||\dot{\gamma}_{x,x^*}(s)|ds\nonumber\\
&\leq& \Lambda\int_{0}^{1} d^{-\frac{1}{m-1}}(\gamma_{x,x^*} (s))|\dot{\gamma}_{x,x^*}(s)|ds\nonumber\\
&\leq& \Lambda \delta^{-\frac{1}{m-1}}\int_0^1 |\dot{\gamma}_{x,x^*}(s)|ds\nonumber
\end{eqnarray} From (\ref{lenght}) and $v_{\lambda}(x^{*})=0,$ it follows that 
\begin{equation}\label{115}
|v_{\lambda}(x)-v_{\lambda}(x^{*})|\leq  \Lambda \delta^{-\frac{1}{m-1}}\tilde{d}(x,x^*)\leq\Lambda C_{\Omega}\delta^{-\frac{1}{m-1}}.
\end{equation} 
\end{proof}
\begin{lemma}[Lipschitz continuity in $\Omega_\delta$]\label{locallipschitz}\text{}\\
Let $f\in C(\overline{\Omega})$ and $m>2.$ There exists a constant $K_1>0$ depending on $\Omega,$ $m,$ $\delta_0$ and $\Lambda$ (see (\ref{130})) such that for any $0<\lambda<1$ and for all $0<\delta<\delta_0,$
\begin{equation}\label{lipschit}
|v_{\lambda}(x)-v_{\lambda}(y)|\leq K_1\delta^{-\frac{1}{m-1}}|x-y| \ \
\ \ \forall x,y\in \Omega_\delta.
\end{equation}
\end{lemma}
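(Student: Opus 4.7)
The plan is to exploit the interior gradient bound of Proposition \ref{122} along suitable paths in $\Omega_\delta$, while splitting the argument into two regimes according to whether $|x-y|$ is small or large compared to the universal length scale $\bar\delta$ coming from Lemma \ref{connectedness}. Throughout, fix $0<\lambda<1$ and $0<\delta<\delta_0$, and note that $v_\lambda$ and $u_\lambda$ have the same gradient, so (\ref{130}) applies to $v_\lambda$ as well. Since $d(z)\geq \delta$ for every $z\in \overline{\Omega_\delta}$, Proposition \ref{122} yields the uniform bound
\begin{equation*}
|Dv_\lambda(z)|\leq \Lambda\,\delta^{-\frac{1}{m-1}}\quad\hbox{for all } z\in \overline{\Omega_\delta}.
\end{equation*}

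First I would handle the \emph{near} case $|x-y|\leq \bar\delta$. By Lemma \ref{connectedness} there is a $C^1$-piecewise path $\gamma_{x,y}\in \mathcal{A}_{x,y}(\Omega_\delta)$ whose total length is bounded by $\bar K |x-y|$ (we may pick $\gamma_{x,y}$ so that the infimum in (\ref{lenght1}) is essentially achieved, or take a concrete quasi-minimizer). Since $u_\lambda$ is locally Lipschitz in $\Omega$ by Proposition \ref{122}, the composition $s\mapsto v_\lambda(\gamma_{x,y}(s))$ is absolutely continuous, so the argument used in the proof of Lemma \ref{deltabound1} gives
\begin{equation*}
|v_\lambda(x)-v_\lambda(y)|\leq \int_0^1 |Dv_\lambda(\gamma_{x,y}(s))|\,|\dot\gamma_{x,y}(s)|\,ds \leq \Lambda\,\delta^{-\frac{1}{m-1}}\,\bar K\,|x-y|.
\end{equation*}

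Next I would treat the \emph{far} case $|x-y|> \bar\delta$. Here the direct path construction may not beat $\bar K|x-y|$, but we already have the global oscillation bound from Lemma \ref{deltabound1}, namely $|v_\lambda(z)|\leq \Lambda C_\Omega \delta^{-\frac{1}{m-1}}$ for $z\in \Omega_\delta$. Consequently
\begin{equation*}
|v_\lambda(x)-v_\lambda(y)| \leq 2\Lambda C_\Omega\,\delta^{-\frac{1}{m-1}} \leq \frac{2\Lambda C_\Omega}{\bar\delta}\,\delta^{-\frac{1}{m-1}}\,|x-y|,
\end{equation*}
the second inequality using $1<|x-y|/\bar\delta$. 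Taking
\begin{equation*}
K_1:=\max\Bigl\{\Lambda\bar K,\; \tfrac{2\Lambda C_\Omega}{\bar\delta}\Bigr\},
\end{equation*}
which depends only on $\Omega$, $m$, $\delta_0$ and $\Lambda$, combines the two cases into (\ref{lipschit}).

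The main technical point to be careful about is the first step: one needs a truly $C^1$-piecewise path lying in $\overline{\Omega_\delta}$ (not merely a short curve in $\Omega$ that might dip into $\Omega^\delta$ where the gradient bound blows up), which is exactly what Lemma \ref{connectedness} provides once $\delta_0$ is chosen small enough that $\Omega_\delta$ remains a $C^2$ pathwise connected domain. The rest is a chain rule integration and the triangle inequality for the remote case.
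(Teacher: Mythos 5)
Your proof is correct and follows essentially the same route as the paper: split into a near regime handled by integrating the interior gradient bound of Proposition \ref{122} along a short path from Lemma \ref{connectedness}, and a far regime handled by the global oscillation bound of Lemma \ref{deltabound1} divided by the lower bound on $|x-y|$. The only (harmless) difference is that you split at the threshold $\bar\delta$ from (\ref{lenght2}) rather than at $\delta_0/2$ as the paper does, which if anything is the more careful choice since (\ref{lenght2}) is only guaranteed for $|x-y|\leq\bar\delta$.
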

Actually, the local Lipschitz continuity in $\Omega$ or the global one in $\Omega_\delta$ is the best one could have here. Indeed, the Lipschitz continuity up to the boundary is hopeless since the gradients blows up at the boundary (see \cite[Proposition 3.3]{BaDa1} or the discussion in the proof of Lemma \ref{130'}). 
\begin{proof}[\bf Proof of Lemma \ref{locallipschitz}]
Let $x,y\in\Omega_\delta.$ If $|x-y|\geq\frac{{\bar\delta}_0}{2}$ then from (\ref{deltabound2}), we have 
\begin{equation}\label{115**}
\underset{\Omega_\delta\times \Omega_\delta}\sup \frac{|v_{\lambda}(x)-v_{\lambda}(y)|}{|x-y|}\leq
\underset{\Omega_\delta\times
\Omega_\delta}\sup(|v_{\lambda}(x)|+|v_{\lambda}(y)|) \delta_{0}^{-1}\leq
4\Lambda C_{\Omega}^{-1}\delta^{-\frac{1}{m-1}}\nonumber
\end{equation} yielding, for all $x,y\in\Omega_\delta$ with $|x-y|\geq\frac{\delta_0}{2}.$
\begin{equation}\label{115***}
|v_{\lambda}(x)-v_{\lambda}(y)|\leq 4\Lambda C_{\Omega}^{-1}\delta^{-\frac{1}{m-1}} |x-y|.
\end{equation}
If $|x-y|<\frac{\delta_0}{2}$ then for any $\gamma_{x,y}\in \mathcal{A}_{x,y}(\Omega_\delta)$ , one gets:
\begin{eqnarray}
|v_{\lambda}(x)-v_{\lambda}(y)|&\leq& \int_{0}^{1} \biggl|\frac{d}{ds}v_{\lambda}(\gamma_{x,y} (s))\biggr|ds\nonumber\\
&\leq& \int_{0}^{1} |Dv_{\lambda}(\gamma_{x,y} (s))||\dot{\gamma}_{x,y} (s)|  ds\nonumber\\
&\leq& \Lambda \delta^{-\frac{1}{m-1}}\int_{0}^{1} |\dot{\gamma}_{x,y} (s)|  ds\nonumber.
\end{eqnarray} With (\ref{lenght2}), one finally gets 
\begin{equation}\label{boundarydistancefunction}
|v_{\lambda}(x)-v_{\lambda}(y)|\leq \Lambda \delta^{-\frac{1}{m-1}}\tilde{d}(x,y)\leq \Lambda {\bar K}\delta^{-\frac{1}{m-1}}|x-y|
\end{equation} Combining (\ref{115**}) and  (\ref{boundarydistancefunction}), we obtain, the global Lipschitz estimates (\ref{lipschit}) on $\Omega_\delta$ for all $0<\delta<\delta_0$ with $K_1=\Lambda(4 C_{\Omega}/\delta_0+  {\bar K}).$ 
\end{proof}
\begin{lemma}[Global Holder estimates]\label{131}\text{}\\
Let $f\in C(\overline{\Omega})$ and $m>2.$ There exists a constant $K_2>0$ depending on $\Omega,$ $m,$ $\delta_0,$ $C^*,$
$K_{\delta_0}$ and $\Lambda$ such that for any $0<\lambda<1,$ the function $v_{\lambda}$ satisfies the global H\"older estimate
\begin{equation}\label{regularity}
|v_{\lambda}(x)-v_{\lambda}(y)|\leq K_2|x-y|^{\frac{m-2}{m-1}} \ \
\ \ \forall x,y\in\overline{\Omega}.
\end{equation} The constants $C_{\Omega},$ $K_{\delta_0}$ and $\Lambda$ are introduced in  (\ref{lenght}), \ref{lenght2})  and  (\ref{130}) respectively.
\end{lemma}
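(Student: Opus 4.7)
The plan is to interpolate between two complementary estimates already proved: the interior Lipschitz bound of Lemma \ref{locallipschitz}, whose constant scales like $\delta^{-1/(m-1)}$ on $\Omega_\delta$, and the integrable gradient blow-up $|Du_\lambda(z)|\leq \Lambda\, d(z)^{-1/(m-1)}$ from Proposition \ref{122}, where integrability near the boundary is exactly what the hypothesis $m>2$ gives. Balancing the parameter $\delta$ against $|x-y|$ will make the H\"older exponent $(m-2)/(m-1)=1-1/(m-1)$ appear automatically.

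Let $x,y\in\overline{\Omega}$ and set $r:=|x-y|$. As a preliminary step, I would first note that $v_\lambda$ is uniformly bounded on all of $\overline{\Omega}$: combining Lemma \ref{deltabound1} applied at $\delta=\delta_0/2$ with a normal-segment integration of Proposition \ref{122} (integrating $s^{-1/(m-1)}$ from $0$ to $\delta_0/2$, which is finite because $m>2$) produces a constant $C^\ast$, depending only on $\Omega$, $m$ and $\Lambda$, with $\|v_\lambda\|_{L^\infty(\overline{\Omega})}\leq C^\ast$ for every $0<\lambda<1$. This already yields (\ref{regularity}) in the large-scale regime $r\geq \delta_0/2$, since there $r^{(m-2)/(m-1)}$ is bounded below by a positive constant and $2C^\ast$ can be absorbed into $K_2$.

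In the remaining small-scale regime $r<\delta_0/2$, pick $\delta:=cr$ with a constant $c\in(0,1)$ to be tuned so that $cr\leq \delta_0/2$. Using the $C^2$-regularity of $d_{\partial\Omega}$ on $\Omega^{\delta_0}$, I project $x$ to the point $\tilde x$ lying on the inward normal through $x$ at distance $\delta$ from $\partial\Omega$ (and set $\tilde x:=x$ if $d(x)\geq\delta$), and similarly define $\tilde y$; by construction $|x-\tilde x|,|y-\tilde y|\leq \delta$ and $\tilde x,\tilde y\in\Omega_\delta$. I then write
\begin{equation*}
|v_\lambda(x)-v_\lambda(y)|\leq |v_\lambda(x)-v_\lambda(\tilde x)|+|v_\lambda(\tilde x)-v_\lambda(\tilde y)|+|v_\lambda(\tilde y)-v_\lambda(y)|.
\end{equation*}
Along each normal segment the distance to $\partial\Omega$ runs monotonically from $d(x)\in[0,\delta]$ to $\delta$, so Proposition \ref{122} yields
\begin{equation*}
|v_\lambda(x)-v_\lambda(\tilde x)|\leq \Lambda\int_{0}^{\delta}s^{-1/(m-1)}\,ds=\frac{(m-1)\Lambda}{m-2}\,\delta^{(m-2)/(m-1)},
\end{equation*}
and the same bound holds for $|v_\lambda(\tilde y)-v_\lambda(y)|$; meanwhile Lemma \ref{locallipschitz} controls the middle term:
\begin{equation*}
|v_\lambda(\tilde x)-v_\lambda(\tilde y)|\leq K_1\delta^{-1/(m-1)}|\tilde x-\tilde y|\leq K_1\delta^{-1/(m-1)}(r+2\delta).
\end{equation*}
Substituting $\delta=cr$ converts every term into a multiple of $r^{(m-2)/(m-1)}$; summing them and choosing $c$ small produces the desired constant $K_2$ depending only on $\Omega$, $m$, $\delta_0$, $C_\Omega$, $\bar K$, $C^\ast$ and $\Lambda$.

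The main technical obstacle is the projection geometry: one needs the normal segment from $x$ to $\tilde x$ to lie entirely in $\overline{\Omega}$ (so that the gradient bound (\ref{130}) applies along it and the integral above is meaningful), and to guarantee that $\tilde x,\tilde y\in\Omega_\delta$ so that Lemma \ref{locallipschitz} applies. Both follow from the existence of a tubular neighborhood of $\partial\Omega$ on which $d_{\partial\Omega}\in C^2$ and $-Dd_{\partial\Omega}$ is the outward unit normal on $\partial\Omega$, as recorded in the construction preceding Lemma \ref{connectedness}; once $c$ is fixed so that $cr\leq \delta_0/2$, the construction is geometrically well-defined and the three estimates combine cleanly to give (\ref{regularity}).
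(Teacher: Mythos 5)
Your proposal is correct and follows essentially the same route as the paper: the same three-term decomposition via points pushed to distance $\delta$ from $\partial\Omega$ along the normal, the same integration of the gradient bound (\ref{130}) on the normal segments (using $m>2$ for integrability), and the same use of the interior Lipschitz estimate (\ref{lipschit}) for the middle term. The only cosmetic difference is that you set $\delta$ proportional to $|x-y|$ directly (and treat the large $|x-y|$ regime separately via the uniform bound), whereas the paper minimizes the resulting function $g(\delta)$ over $\delta$ — the two balancing arguments are equivalent.
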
 
\begin{proof}[\bf Proof of Lemma \ref{131}] If $x\in \Omega^{\delta_0},$ there exists a unique point $p_{\partial\Omega}(x) \in {\partial\Omega}$ such that $|x- p_{\partial\Omega}(x) | = d(x)$ and we define $x_t$ for $0\leq t \leq \delta_0$ by
\begin{equation}
x_t :=p_{\partial\Omega}(x)-t n(p_{\partial\Omega}(x))\nonumber
\end{equation} where we recall that $n(p_{\partial\Omega}(x))$ the unit outward normal vector  to $\partial\Omega$ at $p_{\partial\Omega}(x).$ Since $\Omega$ has a $C^2$- boundary, we have (see \cite{Clarke}) 
\begin{equation}\label{pushinside}
d(x_t)=t \quad \hbox{ for all } \ t\in [0,\delta_0].
\end{equation}
In general, this operation will be used in order to ``push" the point $x$ into $\Omega_\delta,$ for some suitable $\delta \geq d(x).$

Let $x,y \in \overline{\Omega},$ only one of the following cases can occur: either $x \in \Omega_{\delta_0}$ and $y \in \Omega_{\delta_0}$ or   $x \in \Omega^{\delta_0}$ and $y \in \Omega_{\delta_0}$ or $x \in \Omega_{\delta_0}$ and $y \in \Omega^{\delta_0}$ or $x \in  \Omega^{\delta_0}$ and $y \in  \Omega^{\delta_0}.$ The most relevant case, being $x \in  \Omega^{\delta_0}$ or $y \in  \Omega^{\delta_0},$ we will only treat it. %
For any $\delta\in [0,\delta_0],$ we have  
\begin{equation}\label{decomposition}
|v_{\lambda}(x)-v_{\lambda}(y)| \leq |v_{\lambda}(x)-v_{\lambda}(x_\delta)| + |v_{\lambda}(x_\delta)-v_{\lambda}(y_\delta)|+|v_{\lambda}(y_\delta)-v_{\lambda}(y)|.
\end{equation}  
By Proposition \ref{122} and (\ref{pushinside}), we have 
\begin{eqnarray}\label{push1}
|v_{\lambda}(x)-v_{\lambda}(x_\delta)| &\leq&\int
_{0}^{\delta}
\biggl |\frac{d}{dt}v_{\delta}(x_t)\biggr |dt \\
&=& \int _{0}^{\delta}|Dv_{\delta}(x_t)|dt \nonumber \\
&\leq&\delta \int _{0}^{\delta}d^{-\frac{1}{m-1}}(x_t)dt \nonumber \\
&=&\Lambda\int _{0}^{\gamma}t^{-\frac{1}{m-1}}dt
=\frac{m-1}{m-2}\Lambda\delta^{\frac{m-2}{m-1}}\nonumber.
\end{eqnarray} Knowing that $x_\delta, y_\delta \in {\overline \Omega_{\delta}},$ it follows from (\ref{lipschit}) that
\begin{equation}\label{insideholder}
|v_{\lambda}(x_\delta)-v_{\lambda}(y_\delta)| \leq K_1\delta^{-\frac{1}{m-1}}|x-y|.
\end{equation} Plugging  (\ref{push1}) and  (\ref{insideholder}) into (\ref{decomposition}), one gets:
\begin{equation}\label{holder}
|v_{\lambda}(x)-v_{\lambda}(y)|\leq 2\Lambda_m\delta^{\frac{m-2}{m-1}}+ K_1|x-y|\delta^{-\frac{1}{m-1}}:=g(\delta).
\end{equation}
After some computations, we find that the function $g$ achieves a minimum at $${\tilde \delta}=\frac{K_1|x-y|}{2(m-1)\Lambda}\leq\frac{K_1\hbox{diam}(\Omega)}{2(m-1)\Lambda}.$$ Up to choose the constant $\Lambda$ large enough,
we can assume that ${\tilde \delta}\leq \delta_0.$ Therefore, coming back to (\ref{holder}), one gets
\begin{equation}
|v_{\lambda}(x)-v_{\lambda}(y)| \leq g({\tilde \delta})=K_2|x-y|^{\frac{m-2}{m-1}} \nonumber
\end{equation} where $$ K_2:=\frac{K_1^{\frac{m-2}{m-1}}}{(m-2)(2(m-1)\Lambda)^{-\frac{1}{m-1}}}+K_1.$$
\end{proof}
Now, we have all the tools necessary to let $\lambda$ go to $0$ and then obtain the existence and uniqueness of the pair $(c,u_\infty).$
\begin{lemma}[Uniform limit of $v_{\lambda}$]\label{127}\text{}\\
There exists $(c,u_{\infty}) \in \mathbb{R}\times
C^{0,\frac{m-2}{m-1}}(\overline{\Omega})\cap
W_{loc}^{1,\infty}(\Omega),$ a pair solution of $Erg(\Omega,f,c).$ Moreover, if $(v,c)$ and $(\tilde{v},\tilde{c})$ are two pairs of
solutions of $Erg(\Omega,f,c)$ such that $v$ and $\tilde{v}$ both belongs to $C^{0,\frac{m-2}{m-1}}(\overline{\Omega})\cap
W_{loc}^{1,\infty}(\Omega),$ then $c= \tilde{c}$ and  $
\tilde{v}=v+C \ \text{ for some } C\in \mathbb{R}.$
\end{lemma}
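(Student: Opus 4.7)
The plan is to combine an Ascoli--Arzelà extraction with the stability of viscosity solutions to pass to the limit $\lambda \to 0^+$ in the normalized family $\{v_\lambda\}$ introduced in (\ref{ergofunction}), and then to use a strict-subsolution/state-constraint comparison argument for the uniqueness of $c$, together with the Strong Maximum Principle of Lemma \ref{linear*} for the uniqueness up to a constant.

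For existence, first observe that $v_\lambda(x^*)=0$ and Lemma \ref{131} gives a uniform H\"older estimate on $\overline{\Omega}$ independent of $\lambda$, so the family $\{v_\lambda\}_{0<\lambda<1}$ is equibounded and equicontinuous on $\overline{\Omega}$. Ascoli--Arzelà yields a sequence $\lambda_n \downarrow 0$ and $u_\infty \in C^{0,(m-2)/(m-1)}(\overline{\Omega})$ with $v_{\lambda_n} \to u_\infty$ uniformly on $\overline\Omega$; the local Lipschitz bound of Proposition \ref{122} passes to the limit on each compact of $\Omega$ and forces $u_\infty \in W^{1,\infty}_{loc}(\Omega)$. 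Set $c_\lambda := -\lambda u_\lambda(x^*)$; by (\ref{126}), $|c_\lambda| \leq C_1$, so up to a further extraction $c_{\lambda_n} \to c \in \mathbb{R}$. Substituting $u_\lambda = v_\lambda + u_\lambda(x^*)$ into (\ref{121a})--(\ref{121b}) shows that $v_\lambda$ solves
\begin{equation*}
-\Delta v_\lambda + |Dv_\lambda|^m + \lambda v_\lambda = f + c_\lambda \quad \hbox{in } \Omega,
\end{equation*}
together with the state-constraint supersolution inequality on $\partial\Omega$. Since $v_\lambda$ is uniformly bounded on $\overline\Omega$, $\lambda v_\lambda \to 0$ uniformly, and the standard stability of viscosity solutions under uniform convergence (which also preserves the state-constraint inequality up to the boundary) yields that $u_\infty$ is a viscosity solution of $Erg(\Omega,f,c)$.

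For uniqueness of $c$, let $(v,c)$ and $(\tilde v,\tilde c)$ be two pairs of solutions in the stated regularity class and suppose for contradiction that $c > \tilde c$. Writing the equation for $v$ as $-\Delta v + |Dv|^m = f + \tilde c + (c-\tilde c)$, we see that $v$ is a strict subsolution of the equation satisfied by $\tilde v$ in $\Omega$, with margin $\eta := c - \tilde c > 0$. Since the Hamiltonian is translation invariant in $u$, $v + \tau$ remains a strict subsolution for every $\tau \in \mathbb{R}$. A doubling-of-variables comparison between $v + \tau$ (strict subsolution in $\Omega$) and $\tilde v$ (supersolution on all of $\overline{\Omega}$)---where the state-constraint property of $\tilde v$ rules out a loss of the penalized maximum at the boundary, as in the proof of Theorem \ref{scr1}---yields $v + \tau \leq \tilde v$ on $\overline\Omega$ for every $\tau \in \mathbb{R}$. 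Since $v$ and $\tilde v$ are bounded, sending $\tau \to +\infty$ is absurd; hence $c = \tilde c$ (by symmetry). Once $c = \tilde c$, set $w := v - \tilde v \in C^{0,(m-2)/(m-1)}(\overline\Omega) \cap W^{1,\infty}_{loc}(\Omega)$. On any compact $K \subset \Omega$ the inequality
\begin{equation*}
\bigl||Dv|^m - |D\tilde v|^m\bigr| \leq m\bigl(|Dv|+|D\tilde v|\bigr)^{m-1}|Dv-D\tilde v|
\end{equation*}
combined with the $W^{1,\infty}_{loc}$ bound makes $w$ a viscosity subsolution of $-\Delta w - C_K|Dw| \leq 0$ on $K$, and the same holds for $-w$. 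Lemma \ref{linear*} then propagates any interior extremum of $w$, and a connectedness argument on $\Omega$ combined with continuity up to $\partial\Omega$ forces $w$ to be constant on $\overline\Omega$.

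The delicate step, and the main obstacle, is the uniqueness of the ergodic constant $c$: the state-constraint boundary inequality is one-sided (enforced only on the supersolution side), so Theorem \ref{scr1} is not directly applicable. The resolution relies crucially on the translation invariance of the equation (no zeroth-order term), which lets us shift $v$ by arbitrarily large constants, together with the one-sided boundary inequality for $\tilde v$, which prevents the doubled maximum from leaking through $\partial\Omega$.
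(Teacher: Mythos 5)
Your existence step and your uniqueness of the ergodic constant are essentially sound. For the constant you take a genuinely different route from the paper: the paper compares the space--time solutions $v+ct$ and $\tilde v+\tilde c t$ of the parabolic state-constraint problem via the estimate (\ref{31}) and reads off $(\tilde c-c)t\le 0$ at a maximum point of $(v-\tilde v)^+$, whereas you exploit translation invariance plus strictness in the stationary comparison of Theorem~\ref{scr1}. That route works, but note a sign slip: if $c>\tilde c$ then $-\Delta v+|Dv|^m=f+\tilde c+(c-\tilde c)$ makes $v$ a strict \emph{supersolution} of the $\tilde c$-equation, not a strict subsolution; you must instead treat the case $c<\tilde c$ (or compare $\tilde v+\tau$ against $v$), which your final appeal to symmetry does cover.

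The genuine gap is in the uniqueness of $v$ up to a constant. Lemma~\ref{linear*} only bites if $w=v-\tilde v$ attains its maximum at an \emph{interior} point; a priori the maximum of $w$ over $\overline\Omega$ could be attained only on $\partial\Omega$, where your linearization constant $C_K$ blows up (the gradients behave like $d^{-1/(m-1)}$) and no compact $K\subset\Omega$ contains the maximum point. Nothing in your argument excludes this, and ``connectedness plus continuity up to the boundary'' cannot repair it: applying Lemma~\ref{linear*} on $\Omega_\delta$ only pushes the maximum out to $\Gamma_\delta$ and, letting $\delta\to0$, onto $\partial\Omega$, which is the wrong direction. The paper spends most of Step~3 precisely on this point: it builds the strict subsolution $\chi=\frac{\nu}{\alpha}d^{\alpha}-C$ of $S(\Omega^{\delta},f+c,M_1,0)$ in the boundary layer and uses that the state-constraint condition lets $v$ and $\tilde v$ be assigned the \emph{same} artificial Dirichlet datum $M_1\gg1$ on $\partial\Omega$, so that Corollary~\ref{scr2} yields $\sup_{\overline{\Omega^{\delta}}}(\tilde v-v)=\sup_{\Gamma_\delta}(\tilde v-v)$ and hence the global maximum is interior. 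A secondary point: the difference of two viscosity solutions is not automatically a viscosity subsolution of the linearized equation $-\Delta w-C_K|Dw|\le0$; the paper first upgrades $v$ and $\tilde v$ to $W^{2,p}(\Omega_\delta)\hookrightarrow C^{1,\nu}$ via Calderon--Zygmund estimates and the Leray--Schauder theorem so that the linearization (\ref{linearization2}) is legitimate. Your $W^{1,\infty}_{loc}$ bound alone does not justify that step.
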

\begin{proof}[\bf Proof of Lemma \ref{127}] We divide this proof into several parts.

\noindent 1. Existence of the pair $(c,u_\infty).$

\noindent For all $x\in \overline{\Omega},$ plugging $y=x^*$ in (\ref{regularity}) and knowing that $v_\lambda(x^*)=0$  we have:
\begin{eqnarray}\label{globalbounds}
|v_\lambda(x)|\leq K_2|x-x^*|^{\frac{m-2}{m-1}}\leq K_2(diam(\overline{\Omega}))^{\frac{m-2}{m-1}}.
\end{eqnarray} Using (\ref{regularity}) and (\ref{globalbounds}), we find that the family
$\{v_{\lambda}\}_{\lambda>0}\subset C^{0,\frac{m-2}{m-1}}(\overline{\Omega})\cap
W_{loc}^{1,\infty}(\Omega)$ is uniformly globally bounded and equicontinuous. From (\ref{126}), we note that the sequence
$\{\lambda u_{\lambda}(x^*)\}_{\lambda}\subset \mathbb{R}$ is uniformly bounded. From the Azerla-Ascoli compactness
criterion and the compactness of $\overline{\Omega},$ we can choose a sequence $\{\lambda_j\}_{j\in \mathbb{N}}\subset(0,1)$ such that as $j\to+\infty,$
\begin{eqnarray}\label{}
&&\lambda_j\to 0 , \ \ \ -\lambda_j u_{\lambda_j}(x^*)\to c\nonumber \\
&&v_{\lambda_j}(x)\to u_{\infty}(x)\ \ \text{ uniformly for all } x\in\overline{\Omega} \nonumber
\end{eqnarray} for some real constant $c$ and some $u_{\infty} \in
C^{0,\frac{m-2}{m-1}}(\overline{\Omega})\cap
W_{loc}^{1,\infty}(\Omega)$ such that $u_{\infty}(x^*)=0.$ Using the 
stability result for viscosity solutions (see \cite{Barles1}, \cite{BaPe} or \cite{CranILi}), we find that
$u_{\infty}$ is a viscosity solution of $Erg(\Omega,f,c).$

Hereafter, we denote by $PSC(\mathcal{O},\phi,\psi)$ the following parabolic problem with state constraints 
\begin{equation}\label{6} \,\,\,\,\,\,\,\, \left\{
 \begin{array}{rl}
w_{t}-\Delta w + |Dw|^{m} &=\phi \quad\hbox{ in }\quad \mathcal{O} \times [0,T]\\
w&=\psi \quad\hbox{ in }\quad\overline{\mathcal{O}} \times \{0\}\\
w_{t}-\Delta w+ |Du|^{m} &\geq \phi \quad\hbox { on }\quad
\partial\mathcal{O} \times [0,T],
\end{array} 
\right.
\end{equation} for any $T>0.$ We recall that Theorem \ref{13} holds for any generalized initial boundary-value problem of  $E(\Omega,\phi,\chi,\psi,\lambda)$- type, it therefore applies to $PSC(\mathcal{O},\phi,\psi).$ 

\noindent 2. Uniqueness of the ergodic constant $c.$

Noticing that $v+ct$ and
$\tilde{v}+\tilde{c}t$ are solutions solutions of $PSC(\Omega,f+c,v)$ and $PSC(\Omega,f+c,\tilde{v})$ respectively,
we find from (\ref{31}) that
\begin{eqnarray}\label{131'}
(c-\tilde{c})t +(v-\tilde{v})^+(x) \leq \|(v-\tilde{v})^+\|_{\infty}\nonumber
\end{eqnarray} for all $(x,t) \in \overline{\Omega} \times [0,T].$ In particular for $\bar{x}\in \overline{\Omega}$ such that $$(v-\tilde{v})^+(\bar{x})=\max_{\overline{\Omega}}{(v-\tilde{v})^+}.$$ It therefore follows that 
$(\tilde{c}-c)t\leq 0$ meaning that $\tilde{c}\leq c.$ By inverting the roles of $\tilde{c}$ and $c,$ one gets
$\tilde{c}=c.$ 

\noindent 3. Uniqueness of $v$ up to a constant.

\noindent Translating, if necessary $ \tilde{v}$ to $\tilde{v}+K$ for any $K> \|v\|_{\infty}+\|\tilde{v}\|_{\infty},$ we will assume in the rest of the proof that $\tilde{v}\geq v$ on $\overline{\Omega}.$ For all $0<\delta<\delta_0,$ we first argue in $\Omega^\delta$ and then in $\Omega_\delta,$ the aim being to prove that $\tilde{v}-v$ achieves its global maximum on $\overline{\Omega}$ inside $\Omega$ and to conclude by means of the Strong Maximum Principle (Lemma \ref{linear*}) that $\tilde{v}-v$ is constant in $\overline{\Omega}.$ 

Let $0<\delta<\delta_0,$ we first consider $\Omega^\delta.$ 
Let $0<\nu^{m-1}<\frac{1}{m-1}$ and $\alpha:=\frac{m-2}{m-1},$ we denote by $\chi$ the function defined on $\Omega^{\delta}$ by
\begin{eqnarray}
\chi(x):=\frac{\nu}{\alpha} d^{\alpha}(x)-C
\end{eqnarray} where $C$ is a nonnegative constant to be choosen such that $$\chi<M_1:=\|v\|_\infty+ \|\tilde{v}\|_\infty+1 \quad\hbox{ on }\partial\Omega.$$ We claim that $\chi$ is a (smooth) strict subsolution of $\mathcal{S}(\Omega^{\delta},f+c,M_1,0).$ Indeed,
\begin{eqnarray}\label{strictsub}
-\Delta \chi + |D\chi|^{m}-f-c&=[\nu^{m-1}
-\frac{1}{m-1}]\nu d^{-\frac{m}{m-1}}-\nu d^{-\frac{1}{m-1}}\Delta d-f-c.\nonumber
\end{eqnarray} Up to choose $\delta$ small enough, we see that
the right-hand side of the above equality is strictly negative since $d^{-\frac{m}{m-1}}$ is the leading term and $[\nu^{m-1}
-\frac{1}{m-1}] < 0$. Thus, $\mathcal{S}(\Omega^{\delta},f+c,M_1,0)$ has a strict subsolution and we can therefore apply Corollary \ref{scr2} with $\mathcal{O}:={\Omega^{\delta}}.$ Since $v$ and $\tilde{v}$ are two viscosity solutions of $Erg(\Omega,f, c),$ it follows that $v$ and $\tilde{v}$ both satisfies $S(\Omega,f+c,M_1,0)$ and recalling that $\tilde{v}\geq v$ on $\overline{\Omega},$  (\ref{compar1}) yields
$$\underset{{\Omega^{\delta}}}\sup (\tilde{v}-v)=\|(\tilde{v}-v)^+\|_\infty\leq \|(\tilde{v}-v)_{|_{{\partial\Omega^\delta}}}^+\|_{\infty}=\underset{{\partial\Omega^\delta}}\sup (\tilde{v}-v),$$
but given that $\partial\Omega^{\delta}=\partial\Omega \cup \Gamma_\delta$ and that, on $\partial\Omega$, the state-constraint boundary condition  for $v$ and $\tilde{v}$ allows to assume that we have the same Dirichlet boundary condition, namely a constant $M_1\gg 1,$ we  find that
\begin{equation}\label{maxprinc4}
 \underset{\overline{{\Omega^{\delta}}}}\sup (\tilde{v}-v)=\underset{\Gamma_\delta}\sup (\tilde{v}-v).
\end{equation} where $\Gamma_{\delta}:=\{x\in \overline{\Omega} : d(x)= \delta\}.$

Next, we consider $\Omega_\delta.$	We first claim that $v \in W^{2,p}(\Omega_\delta)$ for any $p$ and for any $\delta$. Indeed, since $v\in W^{1,\infty}(\Omega_\delta)$ (see (\ref{130})), by putting $M_2:=\Lambda \delta^{-\frac{m}{m-1}},$ 
we use classical arguments in \cite{GilTru} to show that the unique solution $v$ of
\begin{equation}\label{schauder1}
\left\{
\begin{array}{rl}
-\Delta v+|Dv|^m \wedge M_2+v&=f+c+v\quad\hbox{ in } \Omega_\delta\\
v&=v \quad\quad\quad\hbox{ on } \partial\Omega_\delta=\Gamma_\delta
\end{array}
\right.
\end{equation} is also the unique weak solution of
\begin{equation}\label{schauder2}
\left\{
\begin{array}{rl}
-\Delta w+|Dw|^m \wedge M_2 +w&=G\quad\hbox{ in } \Omega_\delta\\
w&=v \quad\hbox{ on } \partial\Omega_\delta.
\end{array}
\right.
\end{equation} where $G(x)=f(x)+c+v(x)$ in $\Omega$. Indeed, knowing that $G$ and the gradient term are in $L^\infty(\Omega_\delta)$ and therefore in $L^p(\Omega_\delta)$ for all $p$, we have Calderon-Zygmund estimates for Equation~(\ref{schauder2}) and (regularizing the boundary data if necessary) we can use the Leray-Schauder fixed point theorem to solve (\ref{schauder2}) (cf.  \cite{GilTru}). We obtain a solution $w$ which is in $W_{loc}^{2,p}$ for any $p$, and continuous up to $\partial\Omega_\delta$. Moreover, because of its regularity, $w$ is also a viscosity solution of (\ref{schauder1}) and, by uniqueness, $w=v$ in $\Omega_\delta$.

Since the above argument can be used for any $\delta$, we deduce that $v\in W^{2,p}(\Omega_\delta)\cap C(\overline{\Omega_\delta})$ for any $p$ and $\delta$. Moreover
$$v \in W^{2,p}(\Omega_\delta)\hookrightarrow
C^{1,\nu}(\Omega_\delta)\ \ \text{ for all } p>N \text{ and } \nu=1-\frac{N}{p}.$$ 

Of course, the same regularity result is true for $\tilde{v}$ and $\tilde{v}-v$ is a weak subsolution of a linear equation : indeed,
\begin{eqnarray}\label{linearization2}
0&=&-\Delta (\tilde{v}-v)+ |D\tilde{v}|^{m}- |Dv|^{m}\nonumber \\
&=&-\Delta (\tilde{v}-v) + \int
^{1}_{0}\dfrac{d}{dt} |t D\tilde{v} +(1-t)Dv|^{m}dt
\nonumber \\
&=&-\Delta (\tilde{v}-v) +  \mathcal{B}(x)\cdot D(\tilde{v}-v)
\end{eqnarray} where $\mathcal{B}$ is defined as follow
\begin{equation}\label{linear}
\mathcal{B}(x):=\int ^{1}_{0}m|t D \tilde{v}(x) +(1-t)Dv(x)|^{m-2}[t D\tilde{v}(x)+(1-t)Dv(x)]dt.
\end{equation}
But $\mathcal{B}\in L^\infty(\Omega_\delta)$, and more precisely $|\mathcal{B}(x)|\leq m\delta^{-1},$ and we can apply the classical maximum principle 
(see \cite{GilTru}) to (\ref{linearization2}) in  $\Omega_\delta$ and obtain 
\begin{equation}\label{maxprinc3'}
\underset{\overline{\Omega_{\delta}}} \sup(\tilde{v}-v) =\underset{\Gamma_\delta}\sup (\tilde{v}-v).
\end{equation}

Combining (\ref{maxprinc4}) and (\ref{maxprinc3'}), we finally get that the global maximum of $\tilde{v}-v$ on $\overline{\Omega}$
is achieved at some $x_0$ such that for all $d(x_0)=\delta.$ Arguing as above, we see that $\tilde{v}-v$ is a viscosity subsolution of (\ref{linearization}) with $\mathcal{O}:=\Omega_{\delta/2}$ and $C=2m/\delta.$ Applying Lemma \ref{linear*} in $\Omega_{\delta/2},$ we find that $\tilde{v}-v$ is constant in $\Omega_{\delta/2}$ for all $0<\delta<\delta_0.$ But since $\tilde{v}, v\in C^{0,\frac{m-2}{m-1}}(\overline{\Omega}),$ it follows that $\tilde{v}-v$ is constant in $\overline{\Omega}$ by continuity.
\end{proof}
We continue with

\begin{proposition}[Characterization of the constant $c$]\label{ergodic3}\text{}\\
The ergodic constant $c$ introduced in Theorem \ref{112} is
characterized as follows:
\begin{equation}\label{ergodic4}
c=\inf\{a\in \mathbb{R} : \hbox{ there exists } u_a\in C(\overline{\Omega}) \hbox{ with } -\Delta u_a +|Du_a|^{m}\leq f+a  \hbox{  in }\Omega
\}.
\end{equation}
\end{proposition}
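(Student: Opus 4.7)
Denote by $c^*$ the infimum on the right-hand side of (\ref{ergodic4}). The plan is to prove the two inequalities $c^* \leq c$ and $c \leq c^*$ separately, the first being essentially tautological and the second relying on the parabolic comparison principle for the state-constraint problem $PSC(\Omega,f,\cdot)$.

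For $c^* \leq c$, I would simply observe that the function $u_\infty$ furnished by Theorem \ref{112} is a viscosity solution of $-\Delta u_\infty+|Du_\infty|^m=f+c$ in $\Omega$, hence in particular a continuous viscosity subsolution. Therefore $a=c$ belongs to the set appearing in (\ref{ergodic4}), and $c^*\leq c$.

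The substantive content is the reverse inequality $c\leq c^*$, which I would prove by contradiction. Assume there exists $a<c$ and $u_a\in C(\overline{\Omega})$ satisfying $-\Delta u_a+|Du_a|^m\leq f+a$ in $\Omega$ in the viscosity sense. Introduce the two parabolic functions
\[
v(x,t):=u_a(x)-at,\qquad w(x,t):=u_\infty(x)-ct.
\]
A direct computation shows that $v_t-\Delta v+|Dv|^m=-a+(-\Delta u_a+|Du_a|^m)\leq f$ in $\Omega\times(0,T]$, so $v$ is a viscosity subsolution of $PSC(\Omega,f,u_a)$; note that subsolutions of state-constraint problems carry no boundary condition, so no further check at $\partial\Omega$ is required. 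By the very definition of $Erg(\Omega,f,c)$, the function $w$ satisfies $w_t-\Delta w+|Dw|^m=f$ in $\Omega\times(0,T]$ and $w_t-\Delta w+|Dw|^m\geq f$ on $\partial\Omega\times(0,T]$, so $w$ is a viscosity solution of $PSC(\Omega,f,u_\infty)$. Pick $K\geq \|(u_a-u_\infty)^+\|_\infty$, which is finite by compactness of $\overline{\Omega}$ and continuity of $u_a$ and $u_\infty$; then $w+K$ is a supersolution of $PSC(\Omega,f,u_\infty+K)$ whose initial datum dominates that of $v$.

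Now invoke the comparison principle for $PSC$, which, as noted in the text just before (\ref{6}), follows from Theorem \ref{13} applied to the generalized initial boundary-value problem. This yields $v(x,t)\leq w(x,t)+K$ for every $(x,t)\in\overline{\Omega}\times[0,T]$ and every $T>0$, that is,
\[
(c-a)\,t\ \leq\ u_\infty(x)-u_a(x)+K\qquad\text{for all }x\in\overline{\Omega},\ t\geq 0.
\]
Since the right-hand side is uniformly bounded in $(x,t)$ while $c-a>0$, letting $t\to+\infty$ produces a contradiction. Thus no $a<c$ lies in the set in (\ref{ergodic4}), giving $c\leq c^*$ and completing the proof.

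The single delicate point is the comparison step: one has to be confident that $v$ really qualifies as a subsolution of $PSC(\Omega,f,u_a)$ despite the absence of any boundary condition on $u_a$, and that the PSC form of Theorem \ref{13} applies to the pair $(v,w+K)$. Both are addressed by the remark that state constraints restrict only supersolutions, together with the author's explicit assertion that Theorem \ref{13} transfers to $PSC$.
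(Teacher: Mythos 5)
Your proposal is correct and follows essentially the same route as the paper: both reduce the reverse inequality to the parabolic comparison principle for the state-constraint problem, comparing a time-affine function built from $u_a$ against one built from $u_\infty$ and deriving a contradiction from boundedness as $t\to+\infty$ (the paper works with $u_a(x)+(c-a)t$ and right-hand side $f+c$, which is the same computation up to adding $ct$ to both functions). Your explicit insertion of the constant $K=\|(u_a-u_\infty)^+\|_\infty$ to align the initial data is a small point of extra care that the paper's write-up glosses over.
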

\begin{proof}[\bf Proof of Proposition \ref{ergodic3}] We write $\lambda^*$
for the right-hand side of (\ref{ergodic4}) and remark that
$\lambda^*\leq c.$ To see this, we just note that $c\in
\mathbb{R}$ and $u_\infty\in C(\overline{\Omega})$ is a viscosity
subsolution of $-\Delta u_\infty +|Du_\infty|^{m}\leq f+c$ in
$\Omega.$ Assume that $\lambda^*<c,$ there therefore exists a
positive constant $a\in\ ]\lambda^*,c[$ and a continuous subviscosity
solution $u_a$ of $-\Delta u_a +|Du_a|^{m}\leq f+a\ $ in $\Omega.$
We notice that  for any $t\geq 0,$ the function
$u_a(x)+(c-a)t$ is a subsolution of the state constraint problem
$PSC(\Omega,f+c,u_a)$ where as $u_\infty$ is a supersolution of $PSC(\Omega,f+c,u_\infty)$ and
by the comparison result in Theorem \ref{13}, we
find that 
\begin{equation}\label{characterize}
u_a(x)+(c-a)t\leq u_\infty \quad\hbox{ in }\ \Omega\times[0,T]  \quad\hbox{ for any } T>0.
\end{equation} But since $u_\infty$ and $u_a$ are bounded in $\overline{\Omega},$ by choosing $T>0$ large enough, we see that (\ref{characterize})
clearly leads to a contradiction.  Thus, we have $c=\lambda^*,$ ending this proof.
\end{proof}
We end with this
\begin{remark}\label{link}\text{}\rm
\item[\ (i)] We necessarily have $c<0$ when $f>0$ on ${\overline\Omega}.$ Indeed, $u_\infty$ being continuous on $\overline{\Omega}$, it achieves a minimum at some point $x\in \overline{\Omega};$ applying Definition \ref{visc} (ii) with $\varphi=0$, we obtain $0\geq f(x)+c$ which yields $c<0$ since $f(x)>0.$ Notice that we have used the fact that $u_\infty$ is a supersolution of the equation up to the boundary.
\item[\ (ii)] If $c>0,$ then the generalized Dirichlet problem $S(\Omega,f,g,0)$ has no bounded viscosity solution. Indeed, let us assume on the contrary that there exists a bounded continuous viscosity solution, say $\psi,$ of $S(\Omega,f,g,0).$  It is obvious that $\psi(x)+ct$  is subsolution of $E(\Omega,f+c,g+ct,\psi,0)$ and $u_\infty$ is a supersolution of $E(\Omega,f+c,g+ct,u_\infty,0)$ since $u_\infty$ solves the state constraint problem $Erg(\Omega,f,c).$ By Corollary \ref{scr1}, we would have,  for any $T>0,$ $$\|(\psi+ct-u_\infty)^+\|_\infty\leq \|\psi-u_\infty)^+\|_\infty \quad\hbox{ for all } (x,t)\in \Omega\times[0,T].$$  Since $\psi$ and $u_\infty$ are both bounded in $\overline{\Omega},$ by choosing $T>0$ large enough, we reach a contradiction by taking $t=T.$
\end{remark}
\section{Convergence as $t\to +\infty$ to the stationary or ergodic problem }\label{section4}
The goal of this section is to describe the asymptotic behavior of the solution $u$ of the generalized initial boundary-value problem
$E(\Omega,f,g,u_0,\lambda)$ in connection with the generalized boundary-value problem $S(\Omega,f,g,\lambda)$ and the ergodic problem $Erg(\Omega,f,c).$  To do so, we will prove the following:
\begin{theorem}[Convergence result]\label{112'}\text{}\\
Let $m>2,$ $f\in C(\overline{\Omega}),$ $u_0\in C(\overline{\Omega})$ and $g\in C(\partial\Omega).$ Let $u$ be the unique continuous viscosity solution of $E(\Omega,f,g,u_0,\lambda).$ 
\item[{\ \bf (A)} -] Let $\lambda>0.$ Then $u$ is uniformly bounded on
$\overline{\Omega}\times [0, +\infty)$ and
\begin{equation}\label{ergodic***}
u(x,t)\to u_1(x)\quad\hbox{ uniformly on } \overline{\Omega} \quad\hbox{ as } t \to +\infty
\end{equation} where $u_1$ is the unique bounded viscosity solution $S(\Omega,f,g,\lambda).$
\item[{\ \bf (B)} -] Assume $\lambda=0.$ Let $c \in \mathbb{R}$ and  $u_\infty\in  C^{0,\frac{m-2}{m-1}}(\overline{\Omega}) \cap W_{loc}^{1,\infty}(\Omega)$ be such that $u_\infty$ is a viscosity solution of the ergodic problem $Erg(\Omega,f,c).$ Then, 
\item[\ (i)] If $c<0,$ then $u$ is uniformly bounded on
$\overline{\Omega}\times [0, +\infty)$ and
\begin{equation}\label{ergodic*}
u(x,t)\to u_2(x)\quad\hbox{ uniformly on } \overline{\Omega} \quad\hbox{ as } t \to +\infty
\end{equation} where $u_2$ is the unique bounded viscosity solution $S(\Omega,f,g,0).$
\item[\ (ii)] If $c>0$ then $S(\Omega,f,g,0)$ has no viscosity
solution. Moreover, the function $u(x,t)+ct$ is uniformly bounded
on $\overline{\Omega}\times [0, +\infty)$ and
\begin{equation}\label{ergodic}
u(x,t)+ct\to u_{\infty}(x) + K_1\quad\hbox{ uniformly on } \overline{\Omega}\quad\hbox{ as } t \to +\infty
\end{equation} for some constant $K_1$ depending on $f,$ $c,$ $u_0,$ and $g.$
\item[\ (iii)] If $c=0,$ then any solution of $S(\Omega,f,g,0)$ has the form $u_\infty-C$ for some constant $C\in \R$ such that $u_\infty-C\leq g$ on $\partial\Omega$. Moreover 
\begin{equation}\label{ergodic**}
u(x,t)\to u_\infty(x) - \tilde C\quad\hbox{ uniformly on } \overline{\Omega}\quad\hbox{ as } t \to +\infty
\end{equation} for some constant $\tilde C$ depending on $f,$ $c,$ $u_0,$ and $g,$ with $u_\infty-\tilde C\leq g$ on $\partial\Omega$.
\end{theorem}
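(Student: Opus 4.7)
The plan is to treat the four cases by a uniform scheme: identify a time-independent target (either a stationary solution or $u_\infty$), use it as a barrier to bound $u$ or $w:=u+ct$, and pass to the limit $t\to+\infty$ by half-relaxed limits. Part (A) is immediate: since $\lambda>0$, Theorem \ref{13'''} provides the unique $u_1\in C(\overline{\Omega})$ solving $S(\Omega,f,g,\lambda)$; viewed as a time-independent function, $u_1$ also solves $E(\Omega,f,g,u_1,\lambda)$, so Corollary \ref{13*} with $f_1=f_2=f$ and $g_1=g_2=g$ yields
\[
\|u(\cdot,t)-u_1\|_\infty \leq e^{-\lambda t}\|u_0-u_1\|_\infty,
\]
which gives both boundedness and uniform exponential convergence.

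For Part (B)(i) ($c<0$), Proposition \ref{ergodic3} produces a continuous $u_a$ with $-\Delta u_a + |Du_a|^m \leq f+a$ for some $a\in(c,0)$; subtracting a large constant yields a strict subsolution of $S(\Omega,f,g,0)$, and Theorem \ref{13'''} then furnishes the unique solution $u_2$. Corollary \ref{13*} with $\lambda=0$ bounds $\|u(\cdot,t)-u_2\|_\infty$ by $\|u_0-u_2\|_\infty$. Introducing the upper and lower half-relaxed limits $\overline{u}$ and $\underline{u}$ of $\{u(\cdot,t)\}$ as $t\to+\infty$, viscosity stability makes $\overline{u}$ a subsolution and $\underline{u}$ a supersolution of $S(\Omega,f,g,0)$; the strong comparison principle (Theorem \ref{scr1}, applicable since a strict subsolution exists) forces $\overline{u}\leq\underline{u}$, hence $\overline{u}=\underline{u}=u_2$ and uniform convergence.

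For Part (B)(ii) ($c>0$), set $w(x,t):=u(x,t)+ct$, so that $w$ solves $E(\Omega,f+c,g+ct,u_0,0)$. Because $u_\infty$ satisfies the state-constraint inequality of $Erg(\Omega,f,c)$ up to $\partial\Omega$, for any constant $K$ the function $u_\infty+K$ is a supersolution of the generalized Dirichlet problem associated with $w$: the PDE inequality makes the boundary condition $\max\{\cdot,\cdot\}\geq 0$ automatic, regardless of the growing datum $g+ct$. Choosing $K$ large enough to dominate $u_0$ initially, and arguing symmetrically with the subsolution $u_\infty-K'$ (for which $u_\infty-K'\leq g+ct$ follows from $g$ bounded below and $K'$ large), Theorem \ref{13} yields $u_\infty-K'\leq w\leq u_\infty+K$. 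Let $\overline{w}$ and $\underline{w}$ be the half-relaxed limits of $w$ as $t\to+\infty$; since $g+ct\to+\infty$, the generalized supersolution boundary condition collapses in the limit to the state-constraint PDE inequality on $\partial\Omega$, so $\overline{w}$ is a subsolution and $\underline{w}$ a supersolution of $Erg(\Omega,f,c)$. The same barrier argument shows that $M(t):=\max_{\overline{\Omega}}(w(\cdot,t)-u_\infty)$ is nonincreasing and $m(t):=\min_{\overline{\Omega}}(w(\cdot,t)-u_\infty)$ nondecreasing, with limits $M_\infty\geq m_\infty$; linearizing $\overline{w}-u_\infty$ to an equation of the type (\ref{linearization2}) and applying Lemma \ref{linear*} in subdomains $\Omega_{\delta/2}$ exactly as in the proof of Lemma \ref{127} identifies $\overline{w}\equiv u_\infty+M_\infty$ and $\underline{w}\equiv u_\infty+m_\infty$ on $\overline{\Omega}$. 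To close $M_\infty=m_\infty$, I extract from $\{w(\cdot,t)\}$ a subsequence converging uniformly on $\overline{\Omega}$ via Arzel\`a--Ascoli: this requires a uniform-in-$t$ H\"older estimate, obtained by rerunning the Lasry--Lions interior gradient argument behind Proposition \ref{122} in the parabolic setting (which uses only the $L^\infty$ bound already in hand) and then the chain-of-paths H\"older computation of Lemma \ref{131}. Any such uniform subsequential limit $w^*$ equals $u_\infty+K$ for some constant $K$, and the monotonicity of $M$ and $m$ forces $K=M_\infty=m_\infty$; hence $w(\cdot,t)\to u_\infty+K_1$ uniformly.

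Part (B)(iii) ($c=0$) runs on the same track with $w$ replaced by $u$ and the barriers $u_\infty\pm K$ still working because $g$ is bounded; the constraint $u_\infty-\tilde{C}\leq g$ on $\partial\Omega$ passes to the limit from the classical inequality $u(\cdot,t)\leq g$, which holds for subsolutions when $m>2$. The structural claim that any bounded solution $\psi$ of $S(\Omega,f,g,0)$ equals $u_\infty-C$ is proved by linearizing $\psi-u_\infty$ to an equation of type (\ref{linearization2}) and applying Lemma \ref{linear*} together with the boundary-pushing device used in the proof of Lemma \ref{127}, the admissible values of $C$ being pinned down by $\psi\leq g$ on $\partial\Omega$. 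The main obstacle throughout (B)(ii)--(iii) is the equality $M_\infty=m_\infty$: it rests on carrying the Bernstein/Lasry--Lions interior gradient estimate of Proposition \ref{122} from the stationary to the time-dependent problem under the sole assumption of the uniform $L^\infty$ bound provided by our barriers, so as to secure the equicontinuity needed for Arzel\`a--Ascoli compactness.
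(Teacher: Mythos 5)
Your overall architecture (barriers for boundedness, the monotone quantities $M(t)$ and $m(t)$, half-relaxed limits, Strong Maximum Principle) matches the paper's, and your treatment of (A) via the contraction estimate (\ref{31}) is a clean shortcut that even yields an exponential rate. But there is a genuine gap at the decisive step of (B)(ii)--(iii), namely the identification $M_\infty=m_\infty$. You assert that ``any uniform subsequential limit $w^*$ of $w(\cdot,t)$ equals $u_\infty+K$,'' but nothing you have established implies this: a time-slice limit $w(\cdot,t_n)\to w^*$ is not, by stability, a solution of $Erg(\Omega,f,c)$ (stability applies to the half-relaxed limits $\overline w,\underline w$, not to arbitrary subsequential limits of time slices), and all the monotonicity of $M$ and $m$ gives you is $\max_{\overline\Omega}(w^*-u_\infty)=M_\infty$ and $\min_{\overline\Omega}(w^*-u_\infty)=m_\infty$ --- which is exactly the statement you are trying to prove, so the argument is circular. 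The elliptic SMP applied separately to $\overline w$ and $\underline w$ produces two constants with no mechanism forcing them to coincide; the paper flags precisely this dead end in a remark. The missing idea is the \emph{parabolic} Strong Maximum Principle applied to the limiting flow: pick $\bar x\in\Omega$ and $t_n\to+\infty$ with $v(\bar x,t_n)\to\overline v(\bar x)=u_\infty(\bar x)+\overline m$, pass (via the Cauchy-sequence argument based on (\ref{31})) to the uniform limit $w(y,s)$ of $v(y,s+t_{n'}-1)$, which solves the parabolic state-constraint problem and satisfies $\max_{\overline\Omega}(w(\cdot,s)-u_\infty)\equiv\overline m$ with the maximum attained at the interior point $(\bar x,1)$; the parabolic version of Lemma \ref{linear*} then propagates constancy backward in time to $s=0$, giving $w(\cdot,0)=u_\infty+\overline m$, after which the contraction property yields the uniform convergence.

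A second, softer weak point: your equicontinuity input for Arzel\`a--Ascoli rests on ``rerunning the Lasry--Lions gradient argument in the parabolic setting using only the $L^\infty$ bound,'' which is not written anywhere and is not routine. The paper's route is different: for $u_0\in C^2(\overline\Omega)$ one first gets $\|u_t\|_\infty\le C_*$ from the barriers $u_0\pm C_*t$ together with the contraction (\ref{comparison}), which turns each time slice into a subsolution of a stationary equation with bounded right-hand side, to which the elliptic H\"older estimate of \cite{CapLeoPo} applies; general $u_0\in C(\overline\Omega)$ is then handled by an approximation argument that your proposal omits entirely (as written, your scheme has no access to uniform-in-$t$ regularity for merely continuous initial data).
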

The rest of this section is devoted to the proof of Theorem \ref{112'}. 
\subsection{The case where $u_0\in C^2(\overline{\Omega})$}\label{subsection1}\text{}

To perform the proof of the convergence result in this case, we introduce the following usefull result
\begin{proposition}[Global H\"older estimates]\label{136**}\text{}\\
Let $f\in L^\infty(\Omega),$ $u_0\in C^2(\overline{\Omega})$ and $g\in C(\partial\Omega).$ 
Then the unique continuous viscosity solution $u$  of $E(\Omega,f,g,u_0,\lambda)$ satisfies
\begin{equation}\label{regularity*}
|u(x,t)-u(y,t)|\leq M|x-y|^{\frac{m-2}{m-1}} \quad\hbox{ for all }
x,y\in\overline{\Omega} \hbox{ and } t\geq0
\end{equation}where $M$ is a positive constant independent of $t.$
\end{proposition}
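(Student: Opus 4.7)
The plan is to mirror closely the strategy used for the stationary problem in Lemmas \ref{locallipschitz}--\ref{131}, adapted to the parabolic setting. The key ingredient is a time-uniform interior gradient estimate for $u(\cdot,t)$ with the critical blow-up rate $d(x)^{-1/(m-1)}$ at the boundary; once this is in hand, path-integration and the push-in trick deliver the global H\"older bound exactly as in the proof of Lemma \ref{131}.

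The central step is to establish that there exists $\Lambda = \Lambda(\Omega, m, \|f\|_\infty, \|u_0\|_{C^2}, \|g\|_\infty)>0$, \emph{independent of $t$}, such that
\begin{equation}\label{planpargradbound}
|Du(x,t)|\leq \Lambda\, d(x)^{-\frac{1}{m-1}}\quad \hbox{for all }(x,t)\in\Omega\times[0,+\infty).
\end{equation}
This is the parabolic counterpart of Proposition \ref{122}. I would obtain it either by a viscosity Bernstein argument applied to $z(x,t)=\chi(d(x))|Du(x,t)|^2$ for a suitable cut-off $\chi$, or along the lines of \cite{CapLeoPo}, through a doubling-of-variables argument in which the penalization is tuned to absorb the superquadratic term $|Du|^m$ and the $C^2$-regularity of $\partial\Omega$ controls the boundary contribution. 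The hypothesis $u_0\in C^2(\overline{\Omega})$ is essential: it provides a uniform bound on $|Du(\cdot,0)|$, so that no singularity can develop at $t=0$ and the constant $\Lambda$ may be chosen so that (\ref{planpargradbound}) holds down to the initial time. Only $\|f\|_\infty$ enters the computation, which is why $f\in L^\infty(\Omega)$ suffices.

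Granted (\ref{planpargradbound}), I fix $t\geq 0$ and $\delta\in(0,\delta_0)$ and integrate $Du(\cdot,t)$ along admissible paths $\gamma\in\mathcal{A}_{x,y}(\Omega_\delta)$ as in Lemma \ref{locallipschitz}; combining with the estimates $\tilde{d}_\delta(x,y)\leq \bar K|x-y|$ for $|x-y|\leq\bar\delta$ and $\tilde{d}_\delta(x,y)\leq C_\Omega$ otherwise (Lemma \ref{connectedness}), I obtain a $t$-independent constant $K_1>0$ with
\begin{equation}\label{planinsidelip}
|u(x,t)-u(y,t)|\leq K_1\,\delta^{-\frac{1}{m-1}}|x-y|\quad\forall\, x,y\in\Omega_\delta.
\end{equation}
Next, for $x\in\Omega^{\delta_0}$ I push into the interior by setting $x_\delta:=p_{\partial\Omega}(x)-\delta\,n(p_{\partial\Omega}(x))$ and integrating (\ref{planpargradbound}) along the normal segment, which yields
\begin{equation}\label{planpush}
|u(x,t)-u(x_\delta,t)|\leq \Lambda\int_0^\delta s^{-\frac{1}{m-1}}\,ds = \tfrac{(m-1)\Lambda}{m-2}\,\delta^{\frac{m-2}{m-1}}.
\end{equation}
Writing $|u(x,t)-u(y,t)|\leq |u(x,t)-u(x_\delta,t)|+|u(x_\delta,t)-u(y_\delta,t)|+|u(y_\delta,t)-u(y,t)|$, bounding the middle term by (\ref{planinsidelip}) and the outer two by (\ref{planpush}), and optimizing over $\delta$ (taking $\tilde\delta\sim|x-y|$ when $|x-y|$ is small, the complementary regime being handled via $\mathrm{diam}(\Omega)$), I recover the announced bound $|u(x,t)-u(y,t)|\leq M|x-y|^{(m-2)/(m-1)}$ with $M$ depending only on $\Lambda$, $K_1$, $m$ and $\Omega$.

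The main obstacle is (\ref{planpargradbound}): obtaining the interior gradient estimate uniformly in $t$. Some care is required both to handle the possible loss of the Dirichlet condition (which is the reason the bound must blow up as $d^{-1/(m-1)}$ and not remain bounded up to $\partial\Omega$, see the discussion following Lemma \ref{locallipschitz}) and to ensure that no dependence on $t$ sneaks in through the Bernstein manipulation when $\lambda=0$. Steps 2--3 above are then routine adaptations of the corresponding arguments already carried out in Section \ref{section3} for the solutions $v_\lambda$.
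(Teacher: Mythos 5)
Your overall architecture (a time-uniform interior gradient bound of the form $|Du(x,t)|\leq \Lambda\, d(x)^{-1/(m-1)}$, followed by path integration in $\Omega_\delta$ and the push-to-the-interior trick) would indeed yield (\ref{regularity*}), and your steps 2--3 are genuinely routine repetitions of Lemmas \ref{locallipschitz} and \ref{131}. The problem is that the entire difficulty of the statement is concentrated in your first step, and you do not prove it: you only name two possible methods (a parabolic viscosity Bernstein argument, or a doubling-of-variables adaptation of \cite{CapLeoPo}) without carrying either out. For a viscosity solution of the \emph{generalized} Dirichlet problem, with possible loss of boundary condition and a superquadratic Hamiltonian, a parabolic gradient estimate of this type is not an off-the-shelf fact, so as written this is a genuine gap rather than a routine omission.

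The paper's proof avoids any parabolic gradient estimate, and the same device would close your gap. One first shows $\|u_t\|_\infty\leq C^*$ uniformly in $t$: by the comparison estimate (\ref{31}), $\|u(\cdot,t+h)-u(\cdot,t)\|_\infty\leq\|u(\cdot,h)-u(\cdot,0)\|_\infty$, and since $u_0\in C^2(\overline{\Omega})$ the functions $u_0(x)\mp C_*t$ are sub-/supersolutions for $C_*$ large, whence $\|u(\cdot,h)-u_0\|_\infty\leq C_*h$. (This barrier argument, not a bound on $|Du(\cdot,0)|$, is where the $C^2$ hypothesis on $u_0$ is actually used.) Once $|u_t|\leq C^*$ is known, each time slice $u(\cdot,t)$ is a viscosity subsolution of the \emph{stationary} coercive inequality $-\Delta w+|Dw|^m+\lambda w\leq\|f\|_\infty+C^*$ with right-hand side independent of $t$, and \cite[Theorem 2.7]{CapLeoPo} then gives (\ref{regularity*}) directly with a $t$-independent constant --- or, if you prefer to keep your route, \cite[Theorem 3.1]{CapLeoPo} gives your gradient bound slice by slice with a $t$-independent $\Lambda$, exactly as in Proposition \ref{122}, after which your steps 2--3 apply verbatim. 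Either way, the missing ingredient you must supply is the time-Lipschitz bound.
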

\begin{proof}[\bf Proof of Proposition \ref{136**}]\text{} It is sufficient to prove that $u(x,t)$ is a Lipschitz continuous 
function with respect to its $t$-variable, that is there exists a constant $C^*>0$ such that
\begin{equation}\label{timebounds}
\|u_t(x,t)\|_\infty\leq C^* \quad\hbox{ for all } (x,t)\in\overline{\Omega}\times [0,+\infty).
\end{equation} Indeed, if (\ref{timebounds}) holds, we remark that $u$ is a viscosity subsolution of $-\Delta u+|Du|^m+\lambda u\leq K$ in $\Omega$ with $K:=\|f\|_\infty+C^*$ and then we just apply \cite[Theorem 2.7]{CapLeoPo} and (\ref{regularity*}) follows.

To prove (\ref{timebounds}), we take a small $h>0$ and by denoting  by $u_\xi(x):=u(x,\xi)$ for all $x \in \Omega,$ we remark $u(\cdot,t+h)$ and $u(\cdot,t)$ are viscosity solution of $E(\Omega,f,g,u_h,\lambda)$ and $E(\Omega,f,g,u_0,\lambda)$ respectively. We find from the Corollary \ref{13*} that
$$\|(u(x,t+h)-u(x,t))^+\|_\infty \leq e^{-\lambda t}\|(u_h(x)-u_0(x))^+\|_\infty\leq \|(u_h(x)-u_0(x))^+\|_\infty.$$ Likewise, one gets
$$\|(u(x,t)-u(x,t+h))^+\|_\infty \leq \|(u_0(x)-u_h(x))^+\|_\infty.$$
It therefore follows that
\begin{eqnarray}\label{comparison}
\|u(x,t+h)-u(x,t)\|_\infty  \leq \|u(x,h)-u(x,0)\|_\infty .
\end{eqnarray}
To continue, we need to estimate $\|u(x,h)-u(x,0)\|_\infty .$ To do so, knowing that $u_0\in C^2(\overline{\Omega}),$ we put $$C_*:=\|\Delta u_0\|_\infty+\|D u_0\|^m_\infty+\lambda\|u_0\|_\infty+\|f\|_\infty+\|g\|_\infty+1$$ and see that $\psi_1(x,t):=u_0(x)-C_*t$ and
$\psi_2(x,t):=u_0(x)+C_*t$ are respectively sub- and supersolution of  $E(\Omega,f,u_0,g,\lambda).$ Indeed, 
$$(\psi_1)_t-\Delta\psi_1+|D\Psi_1|^m+\lambda \psi_1-f=-C_*-\Delta u_0+|Du_0|^m+\lambda u_0-\lambda C_*t-f<0.$$ We do the same with $\psi_2$ and note
from Theorem \ref{13} that 
\begin{equation}
u_0(x)-C_*t\leq u(x,t)\leq u_0(x)+C_*t \ \ \text{ for all } x\in \Omega, t>0.
\end{equation} By taking $t=h,$ it therefore follows that $\|u(x,h)-u(x,0)\|_\infty  \leq C_*h.$ Going back to (\ref{comparison}), we obviously obtain
$\|u(x,t+h)-u(x,t)\|_\infty  \leq C_*h$ and thus (\ref{timebounds}) follows by sending $h\to 0.$
\end{proof}
The rest of this section is organized as follows. In Section
\ref{classical}, we give the proof of Theorem \ref{112'}
{\bf (A)} and {\bf (B)}-(i) while the Sections \ref{generalized} and \ref{stateconstraint} are repsectively devoted to the
proof of Theorem \ref{112'} {\bf (B)}-(ii) and {\bf (B)}-(iii).
\subsubsection{The  case where $\lambda>0$ or $c<0$}\label{classical}\text{}

\noindent First, for $\lambda>0,$ we find from Theorem \ref{13'''} that $S(\Omega,f,g,\lambda)$ has a unique viscosity solution which we denote $u_1.$ Moreover, if $c<0,$ then $u_\infty$ is a strict subsolution of (\ref{112a}). Given that $u_\infty$ and $g$ are bounded, for $C>\|u_\infty\|_\infty+\|g\|_\infty,$ we have $u_\infty-C\leq g$ on $\partial\Omega.$ Therefore, $u_\infty-C$ is a strict subsolution of $S(\Omega,f,g,0)$ and by application of Theorem \ref{13'''}, we find that $S(\Omega,f,g,0)$ has a unique viscosity solution which we denote $u_2.$

\noindent Next, we claim that the solution $u$ of $E(\Omega,f,u_0,g,\lambda)$ is uniformly bounded in $\overline{\Omega}\times [0,+\infty).$ To see that, one just remarks that, for all  $(x,t)\in \Omega\times [0,+\infty),$ we have
\begin{equation}\label{301}
u_\infty(x)-C\leq u(x,t)\leq |x-x_{0}|^{2}+\|u_{0}\|_{\infty}+ \|g\|_{\infty}+1 \quad\hbox{ when } \lambda=0
\end{equation} and 
\begin{equation}\label{301'}
-\frac{\|f\|_{\infty}}{\lambda}\leq u(x,t)\leq |x-x_{0}|^{2}+\|u_{0}\|_{\infty}+ \|g\|_{\infty}+1 \quad\hbox{ when } \lambda>0
\end{equation} where  $x_0\in \mathbb{R}^N$ and $B(x_{0},K)\cap \overline{\Omega}=\emptyset$ with
$K>(\|f\|_{\infty}+2N)^{1/m}$ for some $C\in \mathbb{R}^+.$

Finally, we claim that (\ref{ergodic**}) holds. Indeed, let $(u^\varepsilon)_{0<\varepsilon<1}$ be the sequence defined by
\begin{equation}\label{noisy}
u_{\varepsilon}(x,t)=u(x,t/\varepsilon)\ \text{ for all } (x,t)\in
\Omega\times[0,+\infty).
\end{equation} 
Since $(u_{\varepsilon})_{0<
\varepsilon< 1}$ is uniformly boundedness (see (\ref{301'})), we apply the half-relaxed limits method (see \cite{Barles1}) and find that the functions
$$\overline{u}(x)=\underset{\underset{\varepsilon\downarrow 0}{y\to x}}\limsup\
u_{\varepsilon}(y,t)\ \text{ and }\ \
\underline{u}(x)=\underset{\underset{\varepsilon\downarrow  0}{y\to x}}{\liminf}\
u_{\varepsilon}(y,t)$$ are respectively subsolution and supersolution of
$S(\Omega,f,g,\lambda).$ By their very definition, we have $\underline{u}\leq\overline{u}$ in
$\Omega$  but the comparison result for $S(\Omega,f,g,\lambda)$ yields $\overline{u}\leq\underline{u}$ in
$\Omega.$ Moreover, from (\ref{regularity*}), we have $\overline{u}\in C^{0,\frac{m-2}{m-1}}(\overline{\Omega})$ and  deduce that $\overline{u}$  may be continuously extended on  $\overline{\Omega}$ and thus $\overline{u}\leq\underline{u}$ on $\overline{
\Omega}$ by using Theorem \ref{scr1}. Hence $u_1:=\underline{u}=\overline{u}$ on $\overline{\Omega}$ meaning that $(u_{\varepsilon})_{0<\varepsilon< 1}$ converges  to $u_1$ uniformly in $\overline{\Omega}$ as $\varepsilon\downarrow 0$ and (\ref{ergodic**}) follows. We use exactly the same arguments to prove that (\ref{ergodic*}) holds too.

\subsubsection{The $c>0$ case}\label{stateconstraint}\text{}

We start with
\begin{lemma}\label{behavior1}
$u(x,t)+ct$ is uniformly bounded on $\overline{\Omega}\times[0,T]$ for all $T>0.$
\end{lemma}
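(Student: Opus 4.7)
The plan is to use $u_\infty$ as a two-sided stationary barrier for $v(x,t):=u(x,t)+ct$. The starting observation is that $v$ is the unique viscosity solution of $E(\Omega,f+c,g+ct,u_0,0)$: substituting $v=u+ct$ into (\ref{1})--(\ref{1''}) replaces $f$ by $f+c$ and the boundary data $g$ by $g+ct$, while the initial datum $u_0$ remains unchanged. The remark following Theorem \ref{13''} ensures that the strong comparison result applies with time-dependent boundary data on any finite interval $[0,T]$.

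For the upper bound I would take $w^+(x):=u_\infty(x)+K$ with $K$ large enough that $w^+\geq u_0$ on $\overline\Omega$, and verify that $w^+$, viewed as constant in $t$, is a viscosity supersolution of $E(\Omega,f+c,g+ct,u_0,0)$. The interior supersolution inequality follows at once from $u_\infty$ being a supersolution of the ergodic equation, noting that at any local minimum of $w^+-\varphi$ one has $\varphi_t\geq 0$. The subtle point lies on the lateral boundary: the Dirichlet condition in Definition \ref{visc}(ii) reads $\max\{\varphi_t-\Delta\varphi+|D\varphi|^m-(f+c),\,w^+-(g+ct)\}\geq 0$, and the second entry is eventually negative since $ct\to+\infty$ while $w^+$ is bounded. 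This is exactly where the state constraint property of $u_\infty$ enters: the inequality $-\Delta\varphi+|D\varphi|^m\geq f+c$ holds at boundary points as well, so the first entry is nonnegative and the max condition is automatic.

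For the lower bound I would take $w^-(x):=u_\infty(x)-K$ with $K$ large enough that $w^-\leq u_0$ on $\overline\Omega$ and $w^-\leq g$ on $\partial\Omega$. The interior subsolution inequality is immediate since $u_\infty$ solves the ergodic equation, and on $\partial\Omega$ the Dirichlet condition $\min\{\varphi_t-\Delta\varphi+|D\varphi|^m-(f+c),\,w^--(g+ct)\}\leq 0$ reduces to $w^-\leq g+ct$, which holds because $c>0$, $t\geq 0$ and $w^-\leq g$ by the choice of $K$. Theorem \ref{13} applied on each $[0,T]$ then yields $w^-\leq v\leq w^+$ on $\overline\Omega\times[0,T]$, and since $K$ depends only on $\|u_0\|_\infty$, $\|u_\infty\|_\infty$ and $\|g\|_\infty$, this estimate is uniform in $T$.

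The genuine obstacle is therefore the upper bound at the boundary, where no naive comparison against $g+ct$ is possible. Its resolution is characteristic of the regime $m>2$: the loss of boundary condition in the relaxed formulation (asymmetry between the $\min$ and $\max$ conditions in Definition \ref{visc}) lets a state constraint supersolution automatically serve as a Dirichlet supersolution, and this is precisely what allows the stationary barrier $u_\infty+K$ to dominate $u+ct$ uniformly in time.
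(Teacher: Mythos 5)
Your proposal is correct and follows essentially the same route as the paper: the paper also takes $u_\infty-K$ as a subsolution and $u_\infty+\tilde K$ as a (state-constraint) supersolution of $E(\Omega,f+c,g+ct,u_0,0)$ with $K,\tilde K>\|u_\infty\|_\infty+\|u_0\|_\infty+\|g\|_\infty$, and concludes by Theorem \ref{13}. Your discussion of why the relaxed boundary conditions are satisfied (the $\max$ condition via the state constraint, the $\min$ condition via $w^-\leq g\leq g+ct$) just makes explicit what the paper leaves implicit.
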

\begin{proof}[\bf Proof of Lemma \ref{behavior1}] Since $c>0$ and $u_{\infty}$ solves $Erg(\Omega,f,c),$  we find that $u_{\infty}-K$   is a subsolution of $E(\Omega,f+c,g+ct,u_0,0)$ whereas $u_{\infty}+\tilde{K}$ is supersolution of $E(\Omega,f+c,g+ct,u_0,0)$ with state constraint condition on $\partial\Omega$ with $$K>\|u_\infty\|_\infty+\|u_0\|_\infty+\|g\|_\infty\hbox{ and }\tilde{K}>\|u_\infty\|_\infty+\|u_0\|_\infty+\|g\|_\infty.$$ 
Since $u+ct$ is the unique solution of $E(\Omega,f+c,g+ct,u_0,0),$ we find from Theorem \ref{13} that for all $T>0,$
\begin{equation}\label{138'}
u_{\infty}(x)-K \leq u(x,t)+ct\leq u_\infty(x)+\tilde{K}\quad\hbox{ for all } (x,t) \in \Omega\times[0,T]
\end{equation} But $u_\infty, u(\cdot,t)\in C^{0,{\frac{m-2}{m-1}}}(\overline{\Omega})$ for any $t>0,$ hence, by continuous extention up to the boundary, it follows that (\ref{138'}) still holds for all $x\in\overline{\Omega}.$ 
\end{proof}
\begin{remark}\rm It is worth noticing that ``$c>0$"  and ``$u(x,t)+ct$ is
uniformly bounded on $\overline{\Omega}$" are two consistent facts. Indeed, as a consequence of Remark \ref{link}, the ergodic constant $c$ is  necessarily nonnegative when $\min_{\overline{\Omega}} f<0.$ Therefore, coming back to (\ref{valfunct}), for some $f$ such that $\min_{\overline{\Omega}} f<0$ and $|\min_{\overline{\Omega}} f|\gg1,$ it could happen that $u(x,t)\to -\infty.$  In this case, there exists no bounded solution for $S(\Omega,f,g,0)$ as earlier noted in Remark \ref{link}.  To turn around the probably unboundedness of $u,$ we counterbalance by adding an appropriate nonnegative constant $c>0$ in such a way to obtain a reasonable $f+c$ and a function $u+ct$ bounded from below.  
\end{remark}
\begin{lemma}\label{behavior2} (\ref{ergodic}) holds.
\end{lemma}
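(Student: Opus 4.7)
The approach is to show that the trajectory $w(\cdot,t):=u(\cdot,t)+ct$ converges uniformly on $\overline{\Omega}$ to a translate $u_\infty+K_1$ of $u_\infty$. I first set
\[
\mu(t):=\sup_{\overline{\Omega}}\bigl(w(\cdot,t)-u_\infty\bigr),\qquad \nu(t):=\inf_{\overline{\Omega}}\bigl(w(\cdot,t)-u_\infty\bigr),
\]
both finite by Lemma~\ref{behavior1}, and show that $\mu$ is non-increasing and that $\nu$ is eventually non-decreasing. The key observation is that $u_\infty+\mu(t_0)$ solves $Erg(\Omega,f,c)$, so in particular it satisfies the state-constraint supersolution inequality up to $\partial\Omega$; such a function is automatically a viscosity supersolution of the Dirichlet problem $E(\Omega,f+c,g+c(t+t_0),u_\infty+\mu(t_0),0)$, irrespective of the boundary datum. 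Comparing with $w(\cdot,\cdot+t_0)$ via Theorem~\ref{13} forces $\mu(t_0+t)\leq\mu(t_0)$. Symmetrically, as soon as $t_0$ is so large that $u_\infty+\nu(t_0)\leq g+c(t+t_0)$ on $\partial\Omega$ for all $t\geq 0$, the function $u_\infty+\nu(t_0)$ is a Dirichlet subsolution of the same problem, so $\nu(t_0+t)\geq\nu(t_0)$. Let $\mu_\infty,\nu_\infty$ be the two limits, with $\nu_\infty\leq\mu_\infty$.

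Next, I extract an asymptotic trajectory. Proposition~\ref{136**} provides a $t$-uniform H\"older estimate in $x$, and its proof provides a $(x,t)$-uniform Lipschitz bound in $t$ (this is precisely why the assumption $u_0\in C^2(\overline{\Omega})$ enters in this subsection). By Arzel\`a--Ascoli, any sequence $t_n\to\infty$ admits a subsequence along which $w(\cdot,\cdot+t_n)\to w^\infty$ uniformly on $\overline{\Omega}\times[0,T]$ for every $T>0$. Since the Dirichlet boundary data $g+c(t+t_n)$ tends to $+\infty$, the half-relaxed limits method shows that $w^\infty$ is a viscosity solution of the state-constrained parabolic problem with right-hand side $f+c$: a subsolution of the PDE in $\Omega$ and a supersolution up to $\partial\Omega$. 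By uniform convergence, $\sup_{\overline{\Omega}}(w^\infty(\cdot,t)-u_\infty)\equiv\mu_\infty$ and $\inf_{\overline{\Omega}}(w^\infty(\cdot,t)-u_\infty)\equiv\nu_\infty$ for every $t\geq 0$.

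The heart of the proof is to show that $\mu_\infty=\nu_\infty$. I set $z:=w^\infty-u_\infty$ and use that $u_\infty,w^\infty\in W^{1,\infty}_{loc}(\Omega)$ share the same $\Lambda d^{-1/(m-1)}$ blow-up at the boundary. A classical linearization of $|Dw^\infty|^m-|Du_\infty|^m$ makes $z$ a viscosity solution in $\Omega\times(0,\infty)$ of a linear parabolic equation $z_t-\Delta z+b\cdot Dz=0$ with $b\in L^\infty_{\mathrm{loc}}$. Suppose $\mu_\infty>\nu_\infty$. If $z$ attained the value $\mu_\infty$ at any $(x_0,t_0)\in\Omega\times(0,\infty)$, Lemma~\ref{linear*} applied on $\Omega_\delta$ with $\delta<d(x_0)$ would force $z\equiv\mu_\infty$ on $\Omega_\delta\times[0,t_0]$; sending $\delta\to 0$ would give $z\equiv\mu_\infty$ in $\Omega\times[0,t_0]$, contradicting $\inf_x z(\cdot,t_0)=\nu_\infty$. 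The symmetric argument on $-z$ rules out interior attainment of $\nu_\infty$. The main obstacle, exactly as in the uniqueness proof of Lemma~\ref{127}, is to rule out the remaining case in which the extrema of $z(\cdot,t)$ are attained only on $\partial\Omega$ at every time: I plan to handle it as there, by splitting $\overline{\Omega}$ into a strip $\Omega^\delta$ and an interior $\Omega_\delta$, using a Hopf-type strict subsolution in $\Omega^\delta$ of the form $\tfrac{\nu}{\alpha}d^\alpha-C$ (as in Lemma~\ref{127}) to force the extremum of $z$ to be attained in $\Omega_\delta$, and then applying the parabolic strong maximum principle in $\Omega_\delta$ where $b$ is bounded.

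Once $\mu_\infty=\nu_\infty=:K_1$, the function $z$ is identically equal to $K_1$, hence $w^\infty(x,t)=u_\infty(x)+K_1$. Since $K_1$ is determined by the intrinsic monotone limits $\mu_\infty,\nu_\infty$, it is independent of the extracted subsequence, so the whole family $w(\cdot,t)$ converges uniformly on $\overline{\Omega}$ to $u_\infty+K_1$ as $t\to+\infty$, which is exactly~(\ref{ergodic}).
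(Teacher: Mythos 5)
Your overall architecture (monotone envelopes $\mu,\nu$, compactness of time-translates, strong maximum principle on the limit trajectory, subsequence-independence of the constant) is close in spirit to the paper's, and Steps such as the monotonicity of $\mu$, the eventual monotonicity of $\nu$, and the identification of the limit once an \emph{interior} maximum point of $z=w^\infty-u_\infty$ is available, are all sound. But there is a genuine gap exactly at the point you label ``the remaining case'': you cannot handle the situation where the extrema of $z(\cdot,t)$ are attained only on $\partial\Omega$ ``as in Lemma \ref{127}''. The strip argument of Lemma \ref{127} is elliptic: on $\Omega^\delta$ the comparison estimate (\ref{compar1}) involves only the lateral data on $\partial\Omega^\delta=\partial\Omega\cup\Gamma_\delta$, and since the state constraint lets you assign the same fictitious datum $M_1$ on $\partial\Omega$, the maximum is forced onto $\Gamma_\delta\subset\Omega$. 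In the parabolic version the relevant boundary is the \emph{parabolic} boundary of $\Omega^\delta\times[t_1,t_2]$, which contains the initial slice $\overline{\Omega^\delta}\times\{t_1\}$; the comparison then only yields
$$\sup_{\Omega^\delta\times[t_1,t_2]} z\ \leq\ \max\Bigl\{\sup_{\overline{\Omega^\delta}}z(\cdot,t_1),\ \sup_{\Gamma_\delta\times[t_1,t_2]}z\Bigr\},$$
and since $\sup_{\overline{\Omega^\delta}}z(\cdot,t_1)=\mu_\infty$ whenever the maximum at time $t_1$ sits on $\partial\Omega$, this inequality is vacuous and does not push the maximum to $\Gamma_\delta$. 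So your dichotomy never produces an interior maximum point, and $\mu_\infty=\nu_\infty$ is not established.

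The paper's proof contains precisely the extra ingredient needed here, and its remark after (\ref{convergence1}) explains why the naive two-sided squeeze fails ($Erg(\Omega,f,c)$ has a one-parameter family of solutions, so there is no a priori reason for $\overline m=\underline m$). The fix is to first work with the \emph{stationary} half-relaxed limit $\overline v(x)=\limsup_{y\to x,\,t\to\infty}v(y,t)$, which is a subsolution of $Erg(\Omega,f,c)$ with $\max_{\overline\Omega}(\overline v-u_\infty)=\overline m$; the elliptic strip argument plus Lemma \ref{linear*} then give $\overline v\equiv u_\infty+\overline m$ on all of $\overline\Omega$. This identity is what allows one to pick a fixed interior point $\xb\in\Omega$ and a sequence $t_n\to\infty$ realizing $v(\xb,t_n)\to u_\infty(\xb)+\overline m$, so that the limit trajectory $w$ attains the value $u_\infty(\xb)+\overline m$ at the interior space-time point $(\xb,1)$ --- and only then does the parabolic strong maximum principle close the argument. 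You should insert this stationary intermediate step (or an equivalent device) before invoking Lemma \ref{linear*}. A secondary point: your symmetric treatment of $\nu_\infty$ via the minimum principle requires linearizing $|Dw^\infty|^m-|Du_\infty|^m$ from the other side, which needs interior gradient bounds on $w^\infty$ (not only on $u_\infty$); these hold but must be justified, whereas the paper's route avoids the minimum principle altogether.
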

\begin{proof}[\bf Proof of Lemma \ref{behavior2}] \text{} Hereafter for all
$(x,t)\in \overline{\Omega}\times[0,+\infty),$ we put $$v(x,t):=u(x,t)+ct $$ and split the proof into
several parts.

\noindent 1.  The function $v(\cdot,\cdot + t)$ solves $E(\Omega,f+c,g+c(\cdot+t),v(\cdot,t),0)$ whereas $u_{\infty}$ solves $Erg(\Omega,f,c)$ and therefore 
it is a supersolution of $E(\Omega,f+c,g+c(\cdot+t),u_\infty,0)$ We use (\ref{31}) and obtain, for all $x\in\overline{\Omega}$
and $s\geq t\geq 0$
\begin{equation}
\underset{x\in\overline{\Omega}}\max(v(x,s)-u_{\infty}(x))\leq \underset{x\in\overline{\Omega}}\max
(v(x,t)-u_{\infty}(x)).\nonumber
\end{equation} Therefore, the function
\begin{equation}\label{invariant}
m(t)=\underset{x\in\overline{\Omega}}\max
(v(x,t)-u_{\infty}(x))
\end{equation} is nonincreasing and bounded from below, thus we have
\begin{equation}
m(t)\to \overline{m}\quad\hbox{ as } t\to+\infty.
\nonumber
\end{equation} 

From the uniform boundedness of $v(x,t)$, (\ref{regularity*}) and the half-relaxed limits method, we find that
$$\overline{v}(x)=\underset{\underset{t\to +\infty}{y\to x}}\limsup\ v(y,t)=\underset{t\to +\infty}\limsup\ v(x,t)$$ is a subsolution of $Erg(\Omega,f,c).$
Moreover
\begin{equation}\label{choosemaximum1}
\underset{x\in\overline{\Omega}}\max(\overline{v}(x)-u_{\infty}(x))=\overline{m}
\end{equation} 
Indeed, on one hand, we clearly have $\overline{v}(x) - u_{\infty}(x) \leq \overline{m}$ on $\overline{\Omega}$ by just taking the $\limsup$ in $t$ in the inequality $v(x,t)-u_{\infty}(x) \leq m(t)$. On the other hand, if $(t_n)_n$ is any sequence such that $t_n\to+\infty$ and if $x_n$ is chosen such that
$$ v(x_n,t_n) - u_{\infty}(x_n) = m(t_n)\; ,$$
($x_n$ exists since $v(\cdot, t_n)-u_{\infty}(\cdot)$ is continuous on the compact set $\overline{\Omega}$), we can assume without loss of generality that $x_n \to x_\infty$ and, 
by definition of $\overline{v},$
$$ \overline{m}=\limsup_n \,m(t_n) =\limsup_n \,(v(x_n,t_n) - u_{\infty}(x_n)) \leq \overline{v}(x_\infty) - u_{\infty}(x_\infty)\; .$$

Next we would like to argue as in the proof of Lemma \ref{127} (part 3) to obtain, by means of the Strong Maximum Principle, that
\begin{equation}\label{convergence1}
\overline{v}(x)=u_{\infty}(x)+\overline{m}\quad\hbox{ for all } x\in\overline{\Omega}.
\end{equation} 
There is no difficulty to repeat the argument on $\Omega^\delta$, but, for the argument in $\Omega_\delta$, a priori $\overline{v}$ is not regular enough since it is only a continuous viscosity subsolution of $Erg(\Omega,f,c).$
To turn around this difficulty, we use the convexity of $p \mapsto |p|^m$ and the regularity of $u_\infty$, which yields (at least formally)
$$ |D \overline{v}|^m \geq  |D u_\infty |^m + m |D u_\infty |^{m-2}D u_\infty\cdot (D \overline{v}-D u_\infty)\; .$$
With this argument and using the local bounds on $|D u_\infty |$, it is easy to justify that $\overline{v}-u_\infty$ is a subsolution of an equation like (\ref{linearization}) in $\Omega_\delta$ for any $\delta$ and the proof of Lemma \ref{127} extends with this argument, using Lemma~\ref{linear*}.

\begin{remark}
At this stage, the use of one half-relaxed limit $\overline{v}$ just gives us a partial convergence. To catch a uniform convergence, we have to re-argue as above with the other half-relaxed limit $\underline{v}(x)=\underset{\underset{t\to +\infty}{y\to x}}\liminf\ v(y,t).$ By using the minimum and the strong minimum principles, we obtain 
$$\underline{v}(x)=u_{\infty}(x)+\underline{m}\quad\hbox{ for all } x\in\overline{\Omega}$$ for some constant $\underline{m}.$ But since $Erg(\Omega,f,c)$ has many solutions, there is no reason to guess that $\overline{m}=\underline{m}.$ So we cannot conclude in this way and we turn around that difficulty by using a combination of arguments relying on the H\"older estimates for $v$ and the Strong Maximum Principle for parabolic equations.
\end{remark}

\noindent 2. Now we choose any point $\xb \in \Omega$. By definition of $\overline{v},$ there exists a sequence $t_n \to + \infty$ such that $v(\xb,t_n)\to \overline{v}(\xb)$ as $n\to+\infty;$ we can assume without loss of generality that $t_n \geq 1$ for any $n$. 

Next we consider the sequence $(v(\cdot,t_n -1))_{n}$. From Proposition \ref{136**}, we can apply Ascoli's Theorem and extract a subsequence $(v(\cdot,t_{n'}-1))_{n'}$ which converges in $C(\overline{\Omega}),$ namely
\begin{equation}\label{cauchy3}
v(\cdot,t_{n'}-1)\underset{n'\to+\infty}\to v_0  \quad\hbox{ uniformly on } \overline{\Omega}.
\end{equation} Moreover, for large $n',$ we define the functions
\begin{equation}\label{change}
w_{n'}(y,s):=v(y,s+t_{n'}-1) \quad\hbox{ for all } y\in \Omega, s>0.
\end{equation} Knowing that we are dealing with large $n'$ and since $v$ is uniformly bounded (see Lemma \ref{behavior1}), we find that 
$$v(y,s+t_{n'}-1) <  g(y)+c(s+t_{n'}-1) \quad\hbox{ for all } (y,s)\in \partial\Omega\times[0,+\infty).$$
It follows that $w_{n'}$ is a viscosity solution of the parabolic state-constraint problem $PSC(\Omega,f+c,v(x,t_{n'}-1)).$ From (\ref{31}), for any $p',q'>n',$ we obtain
\begin{equation}\label{cauchy1}
\| w_{p'}-w_{q'} \|_\infty\leq
\| v(\cdot,t_{p'}-1)-v(\cdot,t_{q'}-1)\|_\infty\underset{{p'},{q'}\to+\infty}\to 0\nonumber.
\end{equation}
Therefore $(w_{n'})_{n'}$
is a Cauchy sequence in $C(\overline{\Omega}\times[0,+\infty))$ for large $n'$ and
\begin{equation}\label{cauchy2}
w_{n'}(y,s)\underset{n'\to+\infty}\to w(y,s) \quad\hbox{
uniformly for all }\ y\in \Omega,\ s>0.
\end{equation} 
By stability, $w$ is a viscosity solution of $PSC(\Omega,f+c,v_0).$ We use the uniform convergence of the $(w_{n'})_{n'}$ to pass to the limit in 
$$m(s+t_{n'}-1)= \underset{y\in\overline{\Omega}}\max
(w_{n'}(y,s)-u_{\infty}(y))$$ and obtain
\begin{equation}\label{timeconst}
\overline{m}=\underset{x\in\overline{\Omega}}\max
(w(x,t)-u_{\infty}(x)) \hbox{ for any }\ \ t\geq0.
\end{equation} Hence the function $$t\mapsto\underset{x\in\overline{\Omega}}\max
(w(x,t)-u_{\infty}(x))$$ is constant on $\mathbb{R}_+.$ 

Since $v(\xb,t_n)\to \overline{v}(\xb)$ as $n\to+\infty$, $w_{n'}(\xb,1)= v(\bar{x},t_{n'}) \to \overline{v}(\xb)$ as $n' \to +\infty.$ Hence, by (\ref{convergence1})
$$  w(\xb,1)=\underset{n'\rightarrow+\infty}\lim w_{n'}(\xb,1)=\overline{v}(\xb)=u_\infty(\xb)+\overline{m} \; .$$

\noindent 3. Using (\ref{timeconst}), we find that, for any $t\geq0,$
\begin{equation}
\underset{x\in\overline{\Omega}}\max
(w(x,t)-u_{\infty}(x))=\overline{m}= w(\xb,1)-u_\infty(\xb), \nonumber
\end{equation}
meaning that the function $$(x,t)\mapsto w(x,t)-u_{\infty}(x)$$ achieves its global maximum on $\overline{\Omega}\times(0,+\infty)$ at $(\xb,1)$ with $\xb \in \Omega.$ 

In order to conclude, we use the same argument as in Step~1 above : the convexity of $p\mapsto|p|^m$ and (\ref{130}) yield that $w(x,t)-u_{\infty}$ satisfies
(\ref{linearization1}) in $\mathcal{O}:=\Omega_\delta$, for any $0<\delta<\delta_0,$ with $C:=m/\delta$.

Applying Lemma \ref{linear*} and taking into account the continuity of $w$ and $u_\infty$ up to the boundary, we find that $(x,t)\mapsto w(x,t)-u_{\infty}(x)$ is constant on $\Omega_\delta\times[0,1]$ for all $0<\delta<\delta_0,$ and therefore on $\overline{\Omega}\times[0,1]$. In particular
\begin{equation}\label{}
w(x,0)=v_0(x)=u_{\infty}(x)+\overline{m}\quad\hbox{ for all } x\in
\overline{\Omega}\nonumber.
\end{equation}
\ 

\noindent 4. We have seen above that $v(x,t)$ is a viscosity solution of the parabolic state-constraint problem if $t$ is large enough, and so is $u_{\infty}(x)+\overline{m}$. Comparing these two solutions for $t \geq t_{n'}-1$ with $n'$ large enough, we have by Corollary \ref{scr2}
$$
\|v(x,t)-(u_{\infty}(x)+\overline{m})\|_\infty \leq \|v(x,t_{n'}-1)-(u_{\infty}(x)+\overline{m})\|_\infty\; .$$
But $\|v(x,t_{n'}-1)-(u_{\infty}(x)+\overline{m})\|_\infty \to  \|v_0(x)-(u_{\infty}(x)+\overline{m})\|_\infty = 0$ as $n'\to +\infty$ and we conclude that, as $t \to +\infty,$
\begin{equation}\label{convergence2} 
v(x,t)=u(x,t)+ct\to
u_{\infty}(x)+\overline{m} \quad\hbox{ uniformly for all }  x \in
\overline{\Omega}.
\end{equation}
This ends the proof of the uniform convergence (\ref{ergodic}) with $K_1=\overline{m}.$
\end{proof}
\subsubsection{The case where $c=0$}\label{generalized}\text{}

\noindent We notice that the function $u_\infty-C$ is a viscosity solution of $S(\Omega,f+c,g,0)$ for all $C$ such that $u_\infty-C\leq g$ on $\partial\Omega.$ Indeed $u_\infty-C$ is a solution of the equation since it solves $Erg(\Omega,f,c)$ and
\begin{equation}\label{relaxeddirichlet1}
\min\{u_\infty-C-g,-\Delta u_\infty+|Du_\infty|^m-f-c\}\leq 0\quad\hbox{ on } \partial\Omega
\end{equation}
holds because $u_\infty-C-g \leq 0$ on $\partial\Omega$ while  
\begin{equation}\label{relaxeddirichlet2}
\max\{u_\infty-C-g,-\Delta u_\infty+|Du_\infty|^m-f-c\}\geq 0\quad\hbox{ on } \partial\Omega,
\end{equation}
because $u_\infty-C$ satisfies a state-contraint boundary condition.

All the solutions of $S(\Omega,f+c,g,0)$ are of the form $u_\infty-C$ with $C$ satisfying the above constraint : indeed, one can repeat the arguments of the proof of Step~3 of Lemma~\ref{127} with the adaptations we have used to deduce (\ref{convergence1}).

In the same way, the convergence result (\ref{ergodic**}) is obtained exactly as the one of (\ref{ergodic}), using the strict subsolution argument in $\Omega^\delta$ and Lemma~\ref{linear*} in $\Omega_\delta$, so we skip it.

\subsection{The general case $u_0\in C(\overline{\Omega})$}\label{subsection2}\text{}

\noindent 1.	We smooth $u_0$ by considering  a sequence $(u_{0,\varepsilon})_{\varepsilon}$ of $C^2$- functions such that $u_0^\varepsilon\to u_0$
in $C(\overline{\Omega})$ as $\varepsilon \to 0^+.$ Moreover, we define a sequence $(g_\varepsilon)_{\varepsilon}\in C^2(\partial\Omega)$ as follow: 
\begin{equation}
g_\varepsilon=u_{0,\varepsilon} \hbox{ on } \partial\Omega\quad\hbox{ for all } 0<\varepsilon<1.\nonumber
\end{equation} Given that $u_0$ and $g$ satisfy (\ref{2}), it is obvious that $g_\varepsilon\to g$
in $C(\partial\Omega)$ as $\varepsilon \to 0^+.$  We denote by $u_\varepsilon$ the unique solution of $E(\Omega,f,u_{0,\varepsilon},g_\varepsilon,0).$

\noindent 2. Going back throught the proof of Lemma \ref{127}, we notice that the pair $(c,u_\infty)$ associated to the ergodic problem $Erg(\Omega,f,c)$ is obtained independently of $u_0$ and $g.$ Since $u_{0,\varepsilon}\in C^2(\overline{\Omega}),$ it follows from what is done above  that Theorem \ref{112'} holds for $u_\varepsilon$ in such a way that, as $t \to +\infty$ and uniformly for all $x\in \overline{\Omega}$
\begin{equation}\label{id-smooth}
\left\{
\begin{array}{rl}
&u_\varepsilon(x,t)\to u_{\varepsilon,1}(x)\quad\quad\quad\quad\quad\hbox{ when } \lambda>0 \\
&u_\varepsilon(x,t)\to u_{\varepsilon,2}(x)\quad\quad\quad\quad\quad\hbox{ when } c<0 \\
&u_\varepsilon(x,t)+ct\to u_{\infty}(x) + C_\varepsilon \quad\hbox{ when } c>0\\
&u_\varepsilon(x,t)\to u_\infty(x)+ C_\varepsilon\quad\quad\quad\hbox{ when } c=0.
\end{array}
\right.
\end{equation} for all $0<\varepsilon<1$ where $u_{\varepsilon,1}$ is the unique solution of $S(\Omega,f,g_\varepsilon,\lambda)$ for $\lambda>0,$ and $C_\varepsilon,$ given by Theorem \ref{112'} depends on $f,$ $c,$ $u_{0,\varepsilon}$ and $g_\varepsilon.$

\noindent 3. Next we compare $u_\varepsilon$ and $u$. From (\ref{31}), we find that, for any $0<\varepsilon<1,$ we have
\begin{equation}
\|u_\varepsilon-u\|_\infty\leq \|u_{0,\varepsilon}-u_{0}\|_\infty +\|g_\varepsilon-g\|_\infty=o_\varepsilon(1) \; . \nonumber
\end{equation} 
Moreover, for $\lambda = 0$ and $c \geq 0$, $\|(u_\varepsilon+ct)-(u+ct)\|_\infty=\|u_\varepsilon-u\|_\infty= o_\varepsilon(1)$.

\noindent 3. In the case when $\lambda = 0$ and $c\geq 0,$ the uniform boundedness of $u+ct$ in $\overline{\Omega}\times [0,+\infty)$ obviously follows from Lemma \ref{behavior1} since the arguments we have used do not depends on the regularity of $u_0.$ We use (\ref{id-smooth}) and apply (\ref{31}) to obtain
\begin{equation}
\|(u_\varepsilon+ct)-(u_{\varepsilon'}+ct)\|_\infty\leq \|u_{0,\varepsilon}-u_{0,\varepsilon'}\|_\infty +\|g_\varepsilon-g_{\varepsilon'}\|_\infty\nonumber
\end{equation} for any $0<\varepsilon, \varepsilon'<1.$ By sending $t\to+\infty,$ it follows that
\begin{equation}
\|C_\varepsilon-C_{\varepsilon'}\|_\infty\leq \|u_{0,\varepsilon}-u_{0,\varepsilon'}\|_\infty +\|g_\varepsilon-g_{\varepsilon'}\|_\infty = o_\varepsilon(1)\nonumber
\end{equation} meaning that the sequence $(C_\varepsilon)_{0<\varepsilon<1}$ is a Cauchy sequence in $\mathbb{R}$
and thus $C_\varepsilon\to C_0$ as $\varepsilon\downarrow0.$ Moreover
\begin{eqnarray*}
\|(u+ct)-(u_\infty+C_0)\|_\infty&\leq&\|(u+ct)-(u_\varepsilon+ct)\|_\infty+ \|(u_\varepsilon+ct)-(u_\infty+C_\varepsilon)\|_\infty\nonumber\\ &&+\|(u_\infty+C_\varepsilon)-(u_\infty+C_0)\|_\infty\\
& \leq & o_\varepsilon (1) + \|(u_\varepsilon+ct)-(u_\infty+C_\varepsilon)\|_\infty\; ,
\end{eqnarray*}
where $o_\varepsilon (1)$ is independent of $t$. In order to conclude, we first fix $\varepsilon$ and take a $\limsup$ in $t$ which yields
$$ \limsup_{t\to+\infty} \|(u+ct)-(u_\infty+C_0)\|_\infty \leq o_\varepsilon (1) \; ,$$
and then we let $\varepsilon$ tend to $0$.

\noindent 4.	When $c<0,$ we apply again Corollary \ref{31} and get  
\begin{equation}
\|u_\varepsilon-u_{\varepsilon,2}\|_\infty\leq \|u_{0}-u_{0,\varepsilon}\|_\infty +\|g-g_{\varepsilon'}\|_\infty\nonumber
\end{equation}  using the facts that the solution of $S(\Omega,f,g,0)$ depends continuously on $g$ (see (\ref{31})) and that the solution of $E(\Omega,f,u_0,g,0)$ depends continuously on $u_0$ and $g$ (see (\ref{compar1})), we send $\varepsilon\downarrow0$ and obtain 
$$\underset{t\to+\infty}\lim u(\cdot,t)=u_2\quad\hbox{ uniformly on } \overline{\Omega}. $$

\noindent 5. The cases $c=0$ and $\lambda>0$ are respectively similar to the cases $c>0$ and $c<0,$ we therefore refer to the proof given above.

\bigskip

{\bf Acknowledgements.} \small{ First, I am grateful to Professor Guy Barles and
Olivier Ley, both from the University FranÁois Rabelais of Tours (France), for
having brought this problem to my attention. This work was deeply influenced by many usefull discussions with them and
valuable suggestions from them. Next, I also  thank
Professor Marcel Dossa Cossy from the University of Yaound\'{e} I (Cameroon)
for many helpful advices he gave me. Finally, I warmly thank Professor Alessio
Porretta from the University of Roma Tor Vergata (Italy) for his advices and useful remarks to improve this work.}

\bigskip
\begin{flushleft}
{\bf Thierry TABET TCHAMBA}\hfill
{\par Laboratoire de Math\'ematiques et  Physique Th\'eorique - UMR CNRS 6083\hfill\\
F\'ed\'eration Denis Poisson - FR 2964 - CNRS, Universit\'{e} Fran\c{c}ois Rabelais - Tours.\hfill\\
and \hfill\\
D\'{e}partement de Math\'{e}matiques - Universit\'{e} de Yaound\'{e} I, Yaound\'e.\hfill
\par}
\end{flushleft}

\noindent Email adress: \textit {tabet@lmpt.univ-tours.fr}

\end{document}